\newtheorem{theorem}{Theorem}[section]
\newtheorem{lemma}[theorem]{Lemma}
\newtheorem{corollary}[theorem]{Corollary}
\newtheorem{remark}[theorem]{Remark}
\numberwithin{equation}{section}
\newcommand{\dt}{\ensuremath{\,{\rm d} t}}
\newcommand{\dx}{\ensuremath{\,{\rm d} x}}
\newcommand{\dy}{\ensuremath{\,{\rm d} y}}
\newcommand{\Acal}{{\mathcal{A}}}
\newcommand{\Adash}{\overline{A}}
\begin{document}

\title[Very weak solutions to nonlinear elliptic systems]{Existence, uniqueness and optimal regularity results for very weak solutions to nonlinear elliptic systems}\thanks{M.~Bul\'{\i}\v{c}ek's work is supported by the project LL1202  financed by the Ministry of Education, Youth and Sports, Czech Republic and by the University Centre for Mathematical Modelling, Applied Analysis and Computational Mathematics (Math~MAC) S.~Schwarzacher thanks the program PRVOUK~P47, financed by Charles University in Prague. M.~Bul\'{\i}\v{c}ek is a  member of the Ne\v{c}as Center for Mathematical Modeling}

\author[M.~Bul\'{\i}\v{c}ek]{Miroslav Bul\'{\i}\v{c}ek} 
\address{Mathematical Institute, Faculty of Mathematics and Physics, Charles University in Prague
Sokolovsk\'{a} 83, 186 75 Prague, Czech Republic}
\email{mbul8060@karlin.mff.cuni.cz}

\author[L.~Diening]{Lars Diening} 
\address{Institut f\"{u}r Mathematik, Universit\"{a}t Osnabr\"{u}ck, Albrechtstr. 28a, 49076 Osnabr\"{u}ck, Germany}
\email{ldiening@uni-osnabrueck.de}

\author[S.~Schwarzacher]{Sebastian Schwarzacher}
\address{Department of Mathematical Analysis, Faculty of Mathematics and Physics,  Charles University in Prague,
Sokolovsk\'{a} 83, 186 75 Prague, Czech Republic}
\email{schwarz@karlin.mff.cuni.cz}

\begin{abstract}We establish  existence,  uniqueness and  optimal regularity results for very weak solutions to certain  nonlinear elliptic boundary value problems. We introduce structural asymptotic assumptions of Uhlenbeck type on the nonlinearity, which are sufficient and in many cases also necessary for building such a theory. We  provide a unified approach that leads qualitatively to the same theory as that one available for linear elliptic problems with continuous coefficients, e.g. the Poisson equation.

The result is based on several novel tools that are of independent interest:  local and global estimates for (non)linear elliptic systems in weighted Lebesgue spaces with Muckenhoupt weights, a generalization of the celebrated div--curl lemma for identification of a weak limit in border line spaces and the introduction of a Lipschitz approximation that is stable in weighted Sobolev spaces.
\end{abstract}

\keywords{nonlinear elliptic systems, weighted estimates, existence, uniqueness, very weak solution, monotone operator, div--curl--biting lemma, weighted space, Muckenhoupt weights}
\subjclass[2010]{35D99,35J57,35J60,35A01}

\maketitle

\section{Introduction}
\label{S1}


We study the following nonlinear problem:
for a given $n$-dimensional domain $\Omega \subset \mathbb{R}^n$ with $n\ge 2$, a~given $f:\Omega \to \mathbb{R}^{n\times N}$ with $N \in
\mathbb{N}$ arbitrary and a given mapping $A:\Omega\times
\mathbb{R}^{n\times N}\to \mathbb{R}^{n\times N}$,  to find
$u:\Omega \to \mathbb{R}^N$ satisfying
\begin{align}
\label{eq:sysA}
\begin{aligned}
  -\divergence(A(x, \nabla u)) &= -\divergence
  f &&\textrm{ in }\Omega,\\
  u&=0&&\textrm{ on }\partial \Omega.
  \end{aligned}
\end{align}

Owing to a significant number of problems originating in various applications, it is natural to require that $A$ is a Carath\'{e}odory mapping, satisfying the $p$-coercivity, the $(p-1)$ growth and the (strict) monotonicity conditions. It means that
\begin{align}
&A(\cdot,\eta) \textrm{ is measurable for any fixed } \eta\in \mathbb{R}^{n\times N},\label{Car1}\\
&A(x,\cdot) \textrm{ is continuous for almost all } x\in \Omega\label{Car2}
\end{align}
and for some $p\in (1,\infty)$, there exist positive constants $c_1$ and $c_2$ such that for almost all $x\in \Omega$ and all $\eta_1,\eta_2 \in \mathbb{R}^{n\times N}$
\begin{align}
\label{coercivityJM}
c_1|\eta_1|^p-c_2 &\le A(x,\eta_1)\cdot \eta_1 &&\textrm{$p$-coercivity},\\
|A(x,\eta_1)|&\le c_2(1+|\eta_1|)^{p-1}&&\textrm{$(p-1)$ growth},\label{growthJM}\\
0&\le (A(x,\eta_1)-A(x,\eta_2))\cdot (\eta_1-\eta_2)&&\textrm{monotonicity}.\label{monotoneJM}
\end{align}
If for all $\eta_1\neq \eta_2$ the inequality \eqref{monotoneJM} is strict, then $A$ is said to be strictly monotone.

Under the assumptions \eqref{Car1}--\eqref{monotoneJM}, it is standard to show (with the help of the Minty method, see \cite{Minty63}) that for any $f\in L^{p'}(\Omega; \mathbb{R}^{n\times N})$, with $p'$ defined as $p':=p/(p-1)$, there exists $u\in
W^{1,p}_0(\Omega;\mathbb{R}^N)$ that solves \eqref{eq:sysA} in the sense
of distribution. In addition if $A$ is strictly monotone then this solution is unique in the class of $W^{1,p}_0(\Omega;\mathbb{R}^N)$-weak solutions.

An important question that immediately arises is whether such a result can be extended to a more general setting. Namely,
\begin{equation}\tag{$\mathcal{Q}$}\label{Q}
\begin{aligned}
&\textit{whether for any $f\in L^{\frac{q}{p-1}}(\Omega;\mathbb{R}^{n\times N})$ with $q\in ((p-1),\infty)$ there exists}\\
&\textit{a (unique) $u\in W^{1,q}_0(\Omega;\mathbb{R}^N)$ solving \eqref{eq:sysA} in the weak sense.}
\end{aligned}
\end{equation}

If $q\neq p$, then we call the problem of existence and uniqueness to \eqref{eq:sysA} beyond the natural pairing. If $q>p$ and $f\in L^{q/(p-1)}(\Omega;\mathbb{R}^{n\times N})$, then $f\in L^{p'}(\Omega; \mathbb{R}^{n\times N})$ and the standard monotone operator theory in the duality pairing provides $W^{1,p}_0(\Omega;\mathbb{R}^N)$ solution to \eqref{eq:sysA}. Thus, in this case, \eqref{Q} calls only for improvement of the integrability of $\nabla u$. If $q<p$, then the considered question is apparently  more challenging as the existence of an object to start any kind of analysis with is unclear. This is  the reason why, for $(p-1)<q<p$, $W^{1,q}_0(\Omega;\mathbb{R}^N)$-solutions are called \emph{very weak solutions}.

Our general aim is to establish, for a given $f\in L^{q/(p-1)}(\Omega; \mathbb{R}^{n\times N})$ with  $q\in ((p-1),\infty)\setminus p$, the existence of a (unique) $W^{1,q}_0(\Omega;\mathbb{R}^N)$ solution to \eqref{eq:sysA}--\eqref{monotoneJM}. In this paper, we address this issue to a particular, but important case
$$
p=2.
$$

In general, the answer to \eqref{Q} is not affirmative yet even for $p=2$. This is due to two reasons:
\begin{itemize}
\item[(i)] the way how the nonlinearity $A(x,\eta)$ depends on $\eta$,
\item[(ii)] the way how the nonlinearity $A(x,\eta)$ depends on $x$.
\end{itemize}
We shall discuss each of these points from two perspectives: the available counterexamples and so far established positive results (that were rather sporadic and had several limitations).

First, we consider \eqref{eq:sysA} with $A$ depending only on $\eta$. If $q\ge p$, then there always exists  a (unique) weak solution and  the only difficult part is to obtain appropriate  a~priori estimates in the space $W^{1,q}_0(\Omega;\mathbb{R}^N)$. On the one hand, for general operators, such  a~priori estimates are not true for large $q\gg p$ even in the particular case $p=2$. This follows from the counterexamples due  Ne\v{c}as~\cite{Ne77} and \v{S}ver\'{a}k and Yan~\cite{SvYa02}, where they found  a mapping $A$, which does not depend on $x$ and satisfies\footnote{Not only the mapping $A$ satisfies \eqref{Car1}--\eqref{monotoneJM}, it has even more structure. It is given as a derivative of a uniformly convex smooth potential $F$, which makes the counterexamples even stronger.} \eqref{Car1}--\eqref{monotoneJM} with $p=2$, and showed that the corresponding unique weak solution is not in $\mathcal{C}^1$ or is even unbounded for smooth $f$.  This directly contradicts the general theory for $q\gg p$. The singular behavior of solutions in the above mentioned  counterexamples is due to the fact that the mapping $A$ depends highly nonlinearly on the \emph{vectorial} variable $\eta$. On the other hand, if $q\in [p, p+\varepsilon)$ then the $W^{1,q}_0(\Omega;\mathbb{R}^N)$ theory can be build for general mappings fulfilling only \eqref{Car1}--\eqref{monotoneJM}, where $\varepsilon>0$ depends on $c_1,c_2$. For such $q$'s, it is known that if $f\in L^{q/(p-1)}(\Omega; \setR^{n\times N})$, then there exists a solution $u\in W^{1,q}_0(\Omega;\setR^N)$ to \eqref{eq:sysA}. Such a result can be obtained by using the reverse H\"{o}lder inequality, see e.g. \cite{Gi83}. Further, based on the fundamental results of Uraltseva~\cite{Ura68} (the scalar case) and Uhlenbeck~\cite{Uhl77} (the vectorial case), for the particular choice $A(x,\eta):=|\eta|^{p-2}\eta$, or its various generalizations, the fully satisfactory theory can be built on the range $q\in [p,\infty)$, see \cite{Iwa83} for the case $A(x,\eta)=|\eta|^{p-2}\eta$ and \cite{CafPer98,DieKapSch11} for more general mappings.

For $q<p$ the situation is even more delicate. In this case, the existence of any solution is not straight forward at all. Indeed, a general existence theory for operators satisfying \eqref{Car1}--\eqref{monotoneJM} alone might be impossible to get. Up to now the only general result holds for $q\in (p-\epsilon,p+\epsilon)$ with $\epsilon$ depending only on $c_1$ and $c_2$.  It may be shown with the help of the technique  developed in \cite{Bul12} that any very weak solution to \eqref{eq:sysA} satisfies the uniform estimate
\begin{equation}\label{toco}
\int_{\Omega} |\nabla u|^q \dx \le C(c_1,c_2,q,\Omega)\int_{\Omega}|f|^{\frac{q}{p-1}}\dx \qquad \textrm{for all } q\in (p-\epsilon,p+\epsilon).
\end{equation}
In addition, if $p=2$ and  $A$ is uniformly monotone and also uniformly Lipschitz continuous, i.e.,  for all $\eta_1,\eta_2 \in \mathbb{R}^{n\times N}$ and almost all $x\in \Omega$ there holds
\begin{align}
|A(x,\eta_1)-A(x,\eta_2)|\le c_2|\eta_1-\eta_2|,\label{monotonegood}
\end{align}
then for all $f\in L^q(\Omega; \setR^{n\times N})$  there exists a unique solution $u\in W^{1,q}_0(\Omega;\setR^N)$ to \eqref{eq:sysA} whenever $q\in (2-\varepsilon, 2+\varepsilon)$, see \cite{Bul12},  and we also recall
\cite{GreIwaSbo97} for the result in the so-called grand
Lebesgue spaces $L^{(2}(\Omega)$. However, any existence theory for $q$'s ``away'' from $p$ is either missing or impossible.

More positive results are available in the scalar case $N=1$ and for the \emph{smoother} right hand side, i.e., the case when $f\in W^{1,1}(\Omega;\setR^{n})$ or at least  $f\in BV(\Omega; \setR^{n})$. Then the existence of a very weak solution is known, see the pioneering works \cite{BoGa92} and  \cite{Sta65}. Even more, one can study further qualitative properties of such a solution, see \cite{Mi13}. Moreover, in case $f\in W^{1,1}(\Omega;\setR^n)$, the uniqueness of a solution can be shown in the class of \emph{entropy} solutions, see~\cite{BoGaVa95,BoGaOr96,MasOrs97,MasOrs99}. On the other hand, in case that $f\in BV(\Omega; \setR^{n})$, or more precisely, if $\divergence f$ is only a Radon measure, the  uniqueness is not known. An exception is the case when $\divergence f$ is a finite sum of Dirac measures. In that case the study on isolated singularities by Serrin implies the uniqueness for very general non-linear operators including the $p$-Laplace equation, see \cite{Ser65,FriVer86} and references therein.  Further, this theory for measure-valued right hand side cannot be easily extend to the the case when $N>1$, where the only known result, fully compatible with the scalar case, is established for $A(x,\eta):=|\eta|^{p-2} \eta$ in \cite{DolHM97a}. To conclude this part we would like to emphasize that all results for smoother right hand side surely do not cover the full generality of the result,  we would like to have, which may be easily seen in the framework of the Sobolev embedding. Indeed, if $f\in W^{1,1}(\Omega; \setR^n)$, then $f\in L^\frac{n}{n-1}(\Omega;\setR^n)$ and we see that the case $q\in (p-1,n' (p-1))$ remains untouched even in the scalar case.

The second obstacle, related to (ii), is the possible discontinuity of the operator with respect to the spatial variable. To demonstrate this in more details, we consider the following \emph{linear} problem
\begin{align}
\label{eq:sysm}
\begin{aligned}
  -\divergence(a(x)\nabla u) &=   -\divergence f &&\textrm{ in }\Omega\\
  u&=0&&\textrm{ on }\partial \Omega
  \end{aligned}
\end{align}
with a uniformly elliptic matrix $a$. Note here, that \eqref{eq:sysm} is a particular case of \eqref{eq:sysA} with $A(x,\eta):=a(x)\eta$ and $A$ fulfilling \eqref{Car1}--\eqref{monotoneJM} with $p=2$ and $N=1$. In case $a$ is continuous and $\Omega$ is a $\mathcal{C}^1$-domain, one can
use the singular operator theory and show that for any $f\in L^{q}(\Omega;\mathbb{R}^n)$ there exists a unique weak solution $u\in
W^{1,q}_0(\Omega)$ to \eqref{eq:sysm}, see \cite[Lemma~2]{DolM95}. This can be weakened to the case when $a$ has coefficients with vanishing mean oscillations, see \cite{IwaSbo98} or \cite{DiF96}. However, the same is not true in case that $a$ is uniformly elliptic with general measurable
coefficients. Even worse, it was shown by Serrin~\cite{Ser64}, that for any $q\in (1,2)$ and $f\equiv 0$ there exists an elliptic matrix
$a$ with measurable coefficients such that one can find a distributive solution (called a \emph{pathological solution}) $v\in
W^{1,q}_0(\Omega)\setminus W^{1,2}_0(\Omega)$ that satisfies \eqref{WFG}. These pathological solutions should be excluded as only the zero function itself is the natural solution, which of course is the unique weak solution $u\in W^{1,2}_0(\Omega)$ in case $f\equiv0$. This indicates that any
reasonable theory for $q\in (1,2)$ must be able to avoid the existence of such pathological solutions.

Thus, to get a theory for all $q\in ((p-1),\infty)$, the counterexamples mentioned above indicate  that we need to assume more structural assumptions on $A$, which we shall describe in details in the next section, where we recall our problem, introduce the structural assumptions on $A$ and formulate the main results of this paper.

\section{Results}\label{S:Results}
As discussed above,  we study the problem \eqref{eq:sysA} with a mapping $A$ fulfilling \eqref{Car1}--\eqref{monotoneJM} with the particular choice $p=2$. In this case, the assumptions \eqref{Car1}--\eqref{monotoneJM} reduce to
\begin{align}
&A(\cdot,\eta) \textrm{ is measurable for any fixed } \eta\in \mathbb{R}^{n\times N},\label{2Car1}\\
&A(x,\cdot) \textrm{ is continuous for almost all } x\in \Omega\label{2Car2}
\end{align}
and to the existence of constants $c_1,c_2>0$ such that for all $\eta_1,\eta_2\in \mathbb{R}$
\begin{align}
\label{coercivity}
c_1|\eta_1|^2-c_2 &\le A(x,\eta_1)\cdot \eta_1,\\
|A(x,\eta_1)|&\le c_2(1+|\eta_1|),\label{growth}\\
0&\le (A(x,\eta_1)-A(x,\eta_2))\cdot (\eta_1-\eta_2).\label{monotone}
\end{align}



Further, inspired by the counterexamples recalled in the previous section and also by the available positive results, we shall assume in what follows that the mapping $A$ is \emph{asymptotically Uhlenbeck}, i.e.,  we will assume that there exists a continuous mapping $\tilde{A}:\overline{\Omega} \to \mathbb{R}^{n\times N}\times \mathbb{R}^{n\times N}$ fulfilling the following:
\begin{equation}\label{ass:A}
\begin{split}
&\textrm{For all $\varepsilon>0$ there exists $k>0$ such that  for almost all $x\in \Omega$ and  all $\eta\in \mathbb{R}^{n\times N}$}\\
&\textrm{satisfying $|\eta|\ge k$ there holds } \qquad \abs{A(x,\eta)-\tilde{A}(x)\eta}\le \varepsilon \abs{\eta}.
\end{split}
\end{equation}
This assumption combined with~\eqref{coercivity}--\eqref{monotone}
implies that $\tilde{A}$ necessarily satisfies
\begin{align}\label{growth2}
  c_1|\eta|^2\le \tilde{A}(x)\eta \cdot \eta \le c_2|\eta|^2 \qquad \text{for all
    $\eta \in \setR^{n \times N}$}.
\end{align}
Although the above assumption might seem to be restrictive, it enables us to cover many cases used in applications. The prototype example is of the following form
\begin{equation}\label{prominent}
A(x,\eta)=a(x,\abs{\eta})\eta \quad \textrm{with }\lim_{\lambda\to\infty}a(x,\lambda)=\tilde{a}(x), \textrm{ where } \tilde{a}\in \mathcal{C}(\overline{\Omega}).
\end{equation}
Note that $a$ may be  measurable with respect to $x$  and the required continuity must hold only for $\tilde{a}$. The assumptions \eqref{coercivity}--\eqref{monotone} are met if $a$ is strictly positive, bounded and if the function $a(x,\lambda)\lambda$ is nondecreasing with respect to $\lambda$ for almost all $x\in \Omega$. The fact that besides \eqref{2Car1}--\eqref{monotone}, we will not assume anything more but \eqref{ass:A} makes our approach general.

Moreover, to obtain the uniqueness of the solution, we will consider a stronger version of \eqref{ass:A}, namely we shall assume that $A$ is  \emph{strongly asymptotically Uhlenbeck}, i.e.,  we will assume that there exists a continuous mapping $\tilde{A}:\overline{\Omega} \to \mathbb{R}^{n\times N}\times \mathbb{R}^{n\times N}$ fulfilling the following:
\begin{equation}\label{ass:AA}
\begin{split}
&\textrm{For all $\varepsilon>0$ there exists $k>0$ such that  for almost all $x\in \Omega$  and  all $\eta\in \mathbb{R}^{n\times N}$}\\
&\textrm{satisfying $|\eta|\ge k$ there holds}\quad   \left|\frac{\partial A(x,\eta)}{\partial \eta}-\tilde{A}(x)\right|\le \varepsilon.
\end{split}
\end{equation}
Concerning the  example \eqref{prominent}, the condition \eqref{ass:AA} follows if  $a(x,\lambda)$ is differentiable with respect to $\lambda$ for $\lambda \gg 1$ and that $\lim_{\lambda\to\infty}|a'(x,\lambda)\lambda|=0$. This includes the following approximations for the $p$-Laplace operator
\[
\begin{aligned}
a(x,\abs{\eta})&=\max\set{\mu,\abs{\eta}^{p-2}} &&\textrm{for }p\in (1,2),\\
a(x,\abs{\eta})&=\min\set{\mu^{-1},\abs{\eta}^{p-2}} &&\textrm{for }p\in (2,\infty),\\
\end{aligned}
\]
which are (for small $\mu$) arbitrary close to the original setting.

\bigskip

The first main result of the paper is the following.
\begin{theorem}\label{T1}
Let $\Omega$ be a bounded $\mathcal{C}^1$-domain and $A$ satisfy
  \eqref{2Car1}--\eqref{monotone} and \eqref{ass:A}. Then for any
  $f\in L^q(\Omega; \mathbb{R}^{n\times N})$ with $q\in (1,\infty)$,
  there exists $u\in W^{1,q}_0(\Omega;\mathbb{R}^{n\times N})$ such
  that
\begin{equation}\label{WFG}
\int_{\Omega}A(x,\nabla u) \cdot \nabla \phi \dx = \int_{\Omega} f \cdot \nabla \phi \dx  \qquad \textrm{ for all } \phi\in \mathcal{C}^{0,1}_0(\Omega; \setR^N).
\end{equation}
Moreover, every very weak solution $\tilde{u}\in W^{1,\tilde{q}}_0(\Omega,\setR^N)$ to \eqref{WFG} with some $\tilde{q}>1$ satisfies
\begin{equation}\label{apriori}
\int_{\Omega}|\nabla \tilde{u}|^q \dx \le C(A,q,\Omega)\left(1+ \int_{\Omega}|f|^q\dx\right).
\end{equation}
In addition, if $A$ is strictly monotone and strongly asymptotically Uhlenbeck, i.e., \eqref{ass:AA} holds, then the solution is unique in any class $W^{1,\tilde{q}}_0(\Omega; \setR^N)$ with $\tilde{q}>1$.
\end{theorem}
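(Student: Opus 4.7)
The plan is to treat \eqref{eq:sysA} as a perturbation of the linear problem with the continuous coefficient $\tilde A$. Setting $F(x,\eta) := \tilde A(x)\eta - A(x,\eta)$, the asymptotic Uhlenbeck condition \eqref{ass:A} gives, for each $\varepsilon>0$, a constant $C_\varepsilon$ with $|F(x,\eta)|\le \varepsilon |\eta| + C_\varepsilon$. Every very weak solution $\tilde u\in W^{1,\tilde q}_0(\Omega;\setR^N)$ to \eqref{WFG} then satisfies, distributionally,
\[
-\divergence\bigl(\tilde A(x)\nabla \tilde u\bigr) \;=\; -\divergence\bigl(f + F(x,\nabla \tilde u)\bigr),
\]
with right-hand side in $L^{\min(q,\tilde q)}$. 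Since $\tilde A$ is continuous on $\overline\Omega$ and $\Omega$ is of class $\mathcal C^1$, one can invoke the first novel tool listed in the abstract: global Calder\'on--Zygmund estimates in weighted Lebesgue spaces $L^s_w$ for arbitrary Muckenhoupt weights $w\in A_s$, $s\in(1,\infty)$, with constants depending only on $[w]_{A_s}$, $\tilde A$, $s$, and $\Omega$.

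\textbf{A priori estimate via bootstrapping.}
When $\tilde q\ge q$ one already has $\tilde u\in W^{1,q}_0$, and the weighted estimate with $w\equiv 1$ at exponent $q$ gives
\[
\|\nabla \tilde u\|_{L^q}\le C\|f\|_{L^q}+C\varepsilon\|\nabla \tilde u\|_{L^q}+CC_\varepsilon|\Omega|^{1/q};
\]
choosing $C\varepsilon\le\tfrac12$ and absorbing yields \eqref{apriori}. The delicate case is $\tilde q<q$. Here I would use the Coifman--Rochberg theorem, which provides $(Mh)^\beta\in A_1$ with characteristic depending only on $\beta\in[0,1)$: set $h:=1+|\nabla \tilde u|$ and $w:=(Mh)^{q-\tilde q}$, iterated in admissible increments $q_0=\tilde q<q_1<\cdots<q_k=q$ with $q_{i+1}-q_i<1$ when $q-\tilde q\ge1$. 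The pointwise bound $h\le Mh$ together with the weighted linear estimate at exponent $\tilde q$ then gives
\[
\int_\Omega h^q\le\int_\Omega h^{\tilde q}\,w\le C\int_\Omega|f|^{\tilde q}w+C\varepsilon^{\tilde q}\int_\Omega h^{\tilde q}w+CC_\varepsilon^{\tilde q}\int_\Omega w.
\]
The $L^q$-boundedness of $M$ converts the first and third terms on the right into $C\bigl(\|f\|_{L^q}^{\tilde q}+1\bigr)\|h\|_{L^q}^{q-\tilde q}$, and dividing by $\|h\|_{L^q}^{q-\tilde q}$ after absorbing the $\varepsilon$-term into the left yields \eqref{apriori}.

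\textbf{Existence, uniqueness, and the main obstacle.}
For existence I would regularize by $A_\delta(x,\eta):=A(x,\eta)+\delta\eta$ and approximate $f$ by $f_m\in L^2\cap L^q$; Minty's method then yields $u_{\delta,m}\in W^{1,2}_0$, and since $A_\delta$ is uniformly asymptotically Uhlenbeck the a priori bound of the preceding paragraph is uniform in $\delta,m$, producing a weak $W^{1,q}_0$-limit $u$. The principal obstacle is identifying the weak limit of $A(x,\nabla u_{\delta,m})$ as $A(x,\nabla u)$ in the borderline regime $q<2$, where $W^{1,q'}_0$ is not contained in the test space $\mathcal C^{0,1}_0$; this is where the two remaining novel tools enter---the div--curl--biting lemma for identifying weak products in borderline spaces, and the Lipschitz truncation that is stable in weighted Sobolev spaces---and together they allow the Minty monotonicity trick to be performed outside the $W^{1,2}$-duality pairing. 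Uniqueness under \eqref{ass:AA} follows by writing
\[
A(x,\nabla u_1)-A(x,\nabla u_2)=B(x)(\nabla u_1-\nabla u_2),\qquad B(x):=\int_0^1\partial_\eta A\bigl(x,\nabla u_2+t(\nabla u_1-\nabla u_2)\bigr)\dt,
\]
decomposing $B=\tilde A+(B-\tilde A)$, using \eqref{ass:AA} to make $|B-\tilde A|$ pointwise small on the set where $|\nabla u_1|+|\nabla u_2|$ is large, and applying the weighted estimate to the linear-looking equation for $u_1-u_2$ to absorb the remaining perturbation; on the complementary bounded-gradient set, strict monotonicity rules out any difference.
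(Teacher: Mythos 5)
Your overall architecture coincides with the paper's: compare with the linear operator $\tilde A$, use weighted Calder\'on--Zygmund estimates with Muckenhoupt weights built from maximal functions (Coifman--Rochberg), prove existence by approximation plus the div--curl--biting lemma and weighted Lipschitz truncation, and prove uniqueness by making the difference equation a small perturbation of the linear one. However, there are two genuine gaps in the execution. First, in the bootstrap for $\tilde q<q$ you apply the weighted estimate with $w=(Mh)^{q-\tilde q}$, $h=1+|\nabla\tilde u|$, and then ``absorb'' the $\varepsilon$-term and ``divide by $\|h\|_{L^q}^{q-\tilde q}$''. But with a \emph{positive} power of $Mh$ neither $\int_\Omega h^{\tilde q}w\,\dx$ nor $\int_\Omega|F(\cdot,\nabla\tilde u)|^{\tilde q}w\,\dx$ is known to be finite a priori (both dominate $\int h^{q}\dx$ up to constants, which is exactly what you are trying to bound), so the weighted linear theorem cannot even be invoked for the datum $f+F(\cdot,\nabla\tilde u)$, and the absorption/division is circular. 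The paper avoids this by raising the Lebesgue exponent to $2$ and using the \emph{negative} power $(1+M|\nabla u|)^{q-2}$, truncated as $\omega\wedge j(1+M|\nabla u|)^{q-2}$, for which $\int|\nabla u|^2(1+M|\nabla u|)^{q-2}\dx\le\int(1+|\nabla u|)^q\dx<\infty$ is automatic; the absorption is then legitimate and one passes $j\to\infty$ by monotone convergence, followed by a re-iteration and extrapolation to general $\Acal_p$-weights. Your scheme could be repaired by truncating $w$ from above by $j$, but as written the key steps are not justified.

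Second, the uniqueness argument via $B(x):=\int_0^1\partial_\eta A(x,\nabla u_2+t(\nabla u_1-\nabla u_2))\,\dt$ is not available under \eqref{ass:AA}: that hypothesis controls $\partial_\eta A$ only for $|\eta|\ge k$ (and $A$ need not be differentiable for $|\eta|<k$), and the segment joining $\nabla u_1$ to $\nabla u_2$ can pass through the ball $\{|\eta|<k\}$ even when both endpoints are arbitrarily large (e.g.\ $\nabla u_1\approx-\nabla u_2$). So ``$|B-\tilde A|$ is pointwise small where $|\nabla u_1|+|\nabla u_2|$ is large'' is false, and on the complementary set strict monotonicity gives nothing pointwise -- it can only be used after one has shown $u_1-u_2\in W^{1,2}_0$ and $A(\cdot,\nabla u_1)-A(\cdot,\nabla u_2)\in L^2$ so that $u_1-u_2$ is an admissible test function. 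The paper's Lemma~\ref{L:algebra} supplies exactly the substitute: a case analysis (both gradients small; one small; both large with the segment possibly crossing $B_k$, handled via \eqref{ass:A}; segment staying outside $B_k$, handled via \eqref{ass:AA}) yielding $|A(x,\eta_1)-A(x,\eta_2)-\tilde A(x)(\eta_1-\eta_2)|\le\delta|\eta_1-\eta_2|+C(\delta)$ for \emph{all} $\eta_1,\eta_2$, which is then fed into the weighted estimate with the truncated weights $1\wedge(n\omega_0)$ (again to guarantee finiteness before absorbing). Finally, your existence step only names the div--curl--biting lemma and the Lipschitz truncation without carrying out the identification $\overline A=A(\cdot,\nabla u)$; that identification (choice of the weight $\omega_0=(1+Mf)^{q_0-2}$, verification of the hypotheses of the biting lemma, removal of the biting sets, and the weighted Minty argument) is the substantive content of that part of the proof and cannot be left implicit.
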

Notice here, that \eqref{WFG} is nothing else than the weak formulation of \eqref{eq:sysA}. Next, we would like to emphasize the novelty of the above result.   First, to derive the estimate \eqref{apriori} one can use the comparison of \eqref{WFG} with the system with $A(x,\eta)$ replaced by  $\tilde{A}(x)\eta$ to end up with \eqref{apriori} provided that  the left hand side of \eqref{apriori} is finite a~priori. Thus, the first strong part of the result, is that \eqref{apriori} holds for {\bf all very weak solutions} to \eqref{WFG} that belong to some $W^{1,\tilde{q}}_0(\Omega;\setR^N)$ for some $\tilde{q}>1$.

Second,  Theorem~\ref{T1} implies that {\bf we can construct  solutions for the whole range $q\in (1,\infty)$}, which makes the existence theory identical with the theory for linear operators with continuous coefficients, since we know that the linear theory is not true for $q=1$ or $q=\infty$.

Third, Theorem~\ref{T1} provides {\bf the uniqueness of the very weak solution} for vector valued non-linear elliptic systems without any additional qualitative properties of a solution, e.g. the entropy inequality. In particular, the result of Theorem~\ref{T1} directly leads to the uniqueness of a solution when $\divergence f$ is a general vector-valued Radon measure. As this is of independent interest, we formulate this result in the following corollary, where we shall denote by the symbol $\mathcal{M}(\Omega;\setR^N)$ the space of $\setR^N$-valued Radon measures.
\begin{corollary}\label{CMB}
Let $\Omega$ be a bounded $\mathcal{C}^1$-domain and $A$ satisfy
  \eqref{2Car1}--\eqref{monotone} and \eqref{ass:A}. Then for any
  $f\in \mathcal{M}(\Omega; \setR^N)$ there exists $u\in W^{1,n'-\varepsilon}_0(\Omega;\mathbb{R}^{n\times N})$ with arbitrary $\varepsilon>0$ such
  that
\begin{equation}\label{WFGMB}
\int_{\Omega}A(x,\nabla u) \cdot \nabla \phi \dx = \langle f, \phi \rangle  \qquad \textrm{ for all } \phi\in \mathcal{C}^{0,1}_0(\Omega; \setR^N).
\end{equation}
Moreover, every very weak solution $\tilde{u}\in W^{1,\tilde{q}}_0(\Omega,\setR^N)$ to \eqref{WFGMB} with some $\tilde{q}>1$ satisfies for all $q\in (1,n')$ the following
\begin{equation}\label{aprioriMB}
\int_{\Omega}|\nabla \tilde{u}|^q \dx \le C(A,q,\Omega)\left(1+ \|f\|_{\mathcal{M}}^q\right).
\end{equation}
In addition, if $A$ is strictly monotone and strongly asymptotically Uhlenbeck, i.e., \eqref{ass:AA} holds, then the solution is unique in any class $W^{1,\tilde{q}}_0(\Omega; \setR^N)$ with $\tilde{q}>1$.
\end{corollary}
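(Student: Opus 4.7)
The plan is to reduce Corollary~\ref{CMB} directly to Theorem~\ref{T1} by rewriting the measure source $f$ as the distributional divergence of an $L^q$-vector field. I would extend $f$ by zero to $\setR^n$ and define the $\setR^{n\times N}$-valued field $G$ component-wise by $G^{ji}:=\partial_{x_j}(\Phi*f^i)$, where $\Phi$ is the fundamental solution of $-\Delta$ on $\setR^n$. Then $-\divergence G = f$ in the distributional sense, so that for every $\phi\in\mathcal{C}^{0,1}_0(\Omega;\setR^N)$ one has
$$\langle f,\phi\rangle \;=\; \int_\Omega G\cdot\nabla\phi\dx,$$
which shows that \eqref{WFGMB} is literally \eqref{WFG} with the vector source $G$ in place of $f$.

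The quantitative input is the pointwise bound $|G(x)|\lesssim I_1(|f|)(x)$ with $I_1$ the Riesz potential of order one. Combined with the classical weak-type estimate $I_1:\mathcal{M}(\setR^n)\to L^{n',\infty}(\setR^n)$ and the embedding $L^{n',\infty}(\Omega)\hookrightarrow L^q(\Omega)$ valid on the bounded set $\Omega$ for every $q<n'$, this yields
$$\|G\|_{L^q(\Omega;\setR^{n\times N})} \;\le\; C_q\,\|f\|_{\mathcal{M}}, \qquad q\in(1,n').$$

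With this reduction in hand, all three assertions of the corollary follow at once. For existence, I would fix any $q_0\in(1,n')$ and invoke Theorem~\ref{T1} with source $G\in L^{q_0}(\Omega;\setR^{n\times N})$ to obtain $u\in W^{1,q_0}_0(\Omega;\setR^N)$ satisfying \eqref{WFG}, hence \eqref{WFGMB}; applying \eqref{apriori} to this same $u$ with target exponent $q\in(q_0,n')$ and combining with the $L^q$-bound on $G$ upgrades $u$ to $W^{1,q}_0(\Omega;\setR^N)$ for every $q<n'$. For the a priori estimate \eqref{aprioriMB}, any very weak solution $\tilde u\in W^{1,\tilde q}_0$ of \eqref{WFGMB} is, by the same rewriting, a very weak solution of \eqref{WFG} with source $G$, so \eqref{apriori} combined with the $L^q$-bound on $G$ delivers \eqref{aprioriMB}. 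Uniqueness under \eqref{ass:AA} transfers verbatim from Theorem~\ref{T1}. There is no genuine obstacle beyond Theorem~\ref{T1} itself: the content of Corollary~\ref{CMB} is the observation that, for $q<n'$, a vector-valued Radon measure on $\Omega$ is always the distributional divergence of an $L^q$-field, placing measure-data problems within the divergence-form framework already settled in the main theorem.
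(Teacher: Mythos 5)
Your proposal is correct and follows essentially the same route as the paper: both reduce Corollary~\ref{CMB} to Theorem~\ref{T1} by representing the measure $f$ as $-\divergence G$ for a vector field $G$ lying in $L^q(\Omega)$ for every $q<n'$ with $\norm{G}_{L^q}\lesssim \norm{f}_{\mathcal{M}}$, and then transferring existence, the a~priori bound and uniqueness verbatim. The only (immaterial) difference is that the paper takes $G=\nabla v$ with $v$ the Dirichlet solution of $-\Delta v=f$ in $\Omega$, citing the classical measure-data theory, whereas you construct $G$ via the whole-space Newtonian potential of the zero-extension and obtain the quantitative bound from the weak-type estimate $I_1:\mathcal{M}\to L^{n',\infty}$ — a slightly more self-contained derivation of the same reduction.
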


Although Theorem~\ref{T1} gives the final answer to \eqref{Q}, it is actually a consequence of the following stronger result. It shows the existence of a solution which is optimally smooth with respect to the right hand side in weighted spaces. For $p\in [1,\infty]$, we denote  by $\Acal_p$ the Muckenhoupt class of nonnegative weights on~$\setR^n$ (see the next section for the precise definition) and define the weighted Lebesgue space $L^p_{\omega}(\Omega):=\{f\in L^1(\Omega); \, \int_{\Omega}|f|^p\omega\dx <\infty\}$. Then, we have the following result.
\begin{theorem}\label{T3}
  Let $\Omega$ be a bounded $\mathcal{C}^1$-domain and $A$ satisfy
  \eqref{2Car1}--\eqref{monotone} and \eqref{ass:A} and let $f
  \in L^{p_0}_{\omega_0}(\Omega; \setR^{n\times N})$ for some $p_0 \in (1,\infty)$ and
  $\omega_0 \in \mathcal{A}_{p_0}$. Then there exists a $u\in
  W^{1,1}_0(\Omega;\mathbb{R}^{N})$ solving~\eqref{WFG} such that
  for all $p \in (1,\infty)$ and all weights $\omega \in \Acal_p$ the
  following estimate holds
  \begin{equation}
    \label{apriori3}
    \int_{\Omega} |\nabla u|^p\omega \dx \le C(A_p(\omega),\Omega,A,p)
    \left( 1 +  \int_{\Omega}|f|^p \omega\dx \right),
  \end{equation}
whenever the right hand side is finite.  Moreover, every very weak solution $\tilde{u}\in W^{1,\tilde{q}}_0(\Omega,\setR^N)$ to \eqref{WFG} with some $\tilde{q}>1$ satisfies \eqref{apriori3}. In addition, if $A$ is strictly monotone and strongly asymptotically Uhlenbeck, i.e.,  \eqref{ass:AA} holds, then the solution is unique in any class $W^{1,\tilde{q}}_0(\Omega; \setR^N)$ with $\tilde{q}>1$.
\end{theorem}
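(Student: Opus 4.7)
The plan is to reduce the nonlinear system to a linear one with continuous coefficient $\tilde A(x)$ plus a perturbation that can be absorbed via weighted $L^p$-estimates for the associated linear operator. Concretely, I would rewrite \eqref{WFG} equivalently as
\begin{equation*}
-\divergence(\tilde A(x)\nabla u) = -\divergence\bigl(f-E(x,\nabla u)\bigr),\qquad E(x,\eta):=A(x,\eta)-\tilde A(x)\eta,
\end{equation*}
with zero boundary values. Since $\tilde A$ is continuous on $\overline\Omega$ and uniformly elliptic by \eqref{growth2}, the linear $\mathcal{C}^1$-Dirichlet problem admits weighted $L^p$-estimates of the form $\|\nabla v\|_{L^p_\omega} \leq C(A_p(\omega),p,\tilde A,\Omega)\|F\|_{L^p_\omega}$ for every $p\in(1,\infty)$ and $\omega\in\Acal_p$, a linear weighted theory that is developed independently earlier in the paper. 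The decisive input is then assumption \eqref{ass:A}: for each $\varepsilon>0$ it supplies $k=k(\varepsilon)$ with
\begin{equation*}
|E(x,\nabla u)| \leq C_k \chi_{\{|\nabla u|\leq k\}} + \varepsilon|\nabla u|\qquad\text{a.e.\ in }\Omega.
\end{equation*}
Plugging this into the linear weighted bound and choosing $\varepsilon$ so that $C(A_p(\omega),\dots)\varepsilon<\tfrac12$ absorbs the $\|\nabla u\|_{L^p_\omega}$ term on the right, which formally yields \eqref{apriori3}.

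The subtlety, and the main obstacle, is that the absorption step is only legal if $\|\nabla \tilde u\|_{L^p_\omega}$ is known to be finite a priori. For a generic very weak solution $\tilde u\in W^{1,\tilde q}_0$ with $\tilde q>1$ possibly much smaller than $p$, this is precisely what has to be proved. I would therefore interpose a truncation: apply a Lipschitz approximation $\tilde u_m$ with bounded gradient, for which the weighted norms are trivially finite, establish the bound uniformly in $m$, and pass $m\to\infty$. This is where the paper's novel Lipschitz approximation that is stable in weighted Sobolev spaces, advertised in the abstract, becomes essential: the truncation must preserve $\Acal_p$-weighted control of the gradient while modifying the equation only on a set whose weighted measure can be made arbitrarily small, so that the perturbation it introduces on the right-hand side fits into the same absorption scheme. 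Combining the extrapolation across the full $\Acal_p$-scale with this truncation provides \eqref{apriori3} for every very weak solution.

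For the existence part I would approximate $f$ by a sequence $f_j\in L^2(\Omega;\setR^{n\times N})$, solve each approximating problem by the classical Minty monotone-operator argument to obtain $u_j\in W^{1,2}_0(\Omega;\setR^N)$, and feed these back into the already-established a~priori estimate to bound $\{u_j\}$ uniformly in $W^{1,p}_\omega$. The passage to the limit reduces to identifying $A(x,\nabla u_j)\rightharpoonup A(x,\nabla u)$ in the appropriate weak sense; here I would invoke the generalized div--curl--biting lemma, also listed as a tool in the abstract, to obtain a.e.\ convergence of $\nabla u_j$ outside a small biting set and close the argument by the Minty trick applied to the monotone structure \eqref{monotone}.

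For uniqueness under the strong condition \eqref{ass:AA}, given two very weak solutions $u_1,u_2$ in some $W^{1,\tilde q}_0$, the difference $w=u_1-u_2$ solves the linearized system $-\divergence(B(x)\nabla w)=0$ with
\begin{equation*}
B(x) := \int_0^1 \frac{\partial A}{\partial \eta}\bigl(x,\nabla u_2+s(\nabla u_1-\nabla u_2)\bigr)\,ds.
\end{equation*}
Splitting $B=\tilde A+(B-\tilde A)$ and exploiting \eqref{ass:AA} to bound $|B-\tilde A|\leq\varepsilon$ on the set where $\min(|\nabla u_1|,|\nabla u_2|)\geq k$, with a large-$C_k$ bound on the complement, repeats the weighted $L^p$ absorption scheme applied to $w$ itself, forcing $\nabla w\equiv 0$ and hence $w\equiv 0$. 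The Lipschitz-truncation machinery again ensures that the bootstrap from $W^{1,\tilde q}_0$ into a weighted space where absorption is legitimate can actually be carried out.
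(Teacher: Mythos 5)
Your overall architecture (comparison with the linear operator $\tilde A$, weighted linear estimates, absorption via \eqref{ass:A}, approximation plus div--curl--biting plus Minty for existence) matches the paper, but the mechanism you propose for the decisive step fails. You correctly identify that the absorption of $\varepsilon\|\nabla \tilde u\|_{L^p_\omega}$ is only legal once $\|\nabla \tilde u\|_{L^p_\omega}<\infty$ is known, but Lipschitz-truncating the solution cannot supply this: the truncation $\tilde u_m$ no longer satisfies the equation, and the error it introduces lives on $\set{M(\nabla\tilde u)>\lambda}$ and is controlled (via the second estimate in \eqref{itm:weight}) only in terms of $\int_{\set{M(\nabla\tilde u)>\lambda}}|\nabla\tilde u|^p\omega\dx$ --- exactly the quantity whose finiteness is in question. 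The argument is circular. The paper's actual device is different: for $\tilde u\in W^{1,q}_0$ with $q\in(1,2)$ it works at $p=2$ with the capped weights $\omega_j:=\omega\wedge j(1+M|\nabla\tilde u|)^{q-2}$. By Lemma~\ref{cor:dual} these are $\mathcal{A}_2$-weights with $A_2(\omega_j)$ bounded independently of $j$, and $\nabla\tilde u\in L^2_{\omega_j}$ holds \emph{automatically} because the weight converts the $L^2$-integral into an $L^q$-integral. Absorption is then legal for each $j$, monotone convergence in $j$ gives $\nabla\tilde u\in L^2_\omega$, a second pass gives the estimate with a constant depending only on $A_2(\omega)$, and Rubio de Francia extrapolation transfers it to all $\mathcal{A}_p$. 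The Lipschitz truncation of Theorem~\ref{thm:liptrunc} is used only inside the proof of the div--curl--biting lemma, not for the a~priori bound.

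Two further points on uniqueness. First, the mean-value matrix $B(x)=\int_0^1\partial_\eta A(x,\nabla u_2+s(\nabla u_1-\nabla u_2))\,ds$ need not exist, since \eqref{ass:AA} only asserts differentiability for $|\eta|\ge k$; the paper replaces this by the purely algebraic Lemma~\ref{L:algebra}, giving $|A(x,\eta_1)-A(x,\eta_2)-\tilde A(x)(\eta_1-\eta_2)|\le\delta|\eta_1-\eta_2|+C(\delta)$. Second, and more importantly, the absorption cannot ``force $\nabla w\equiv0$'': because of the additive constant $C(\delta)$ (unavoidable, since no smallness is available where the gradients are small), the scheme only yields $\int_\Omega|\nabla w|^2\omega^n\dx\le C(\delta)\int_\Omega\omega^n\dx$, i.e.\ after letting the bounded weights $\omega^n=1\wedge(n\omega_0)$ increase to $1$, merely $w=u_1-u_2\in W^{1,2}_0(\Omega;\setR^N)$. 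One must then test \eqref{un1} with $\varphi=u_1-u_2$ in the natural duality pairing and invoke the \emph{strict} monotonicity of $A$ to conclude $\nabla w=0$; this last step is missing from your argument.
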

Clearly, Theorem~\ref{T1} is an immediate consequence of Theorem~\ref{T3}. Observe, that \eqref{apriori3} is an optimal existence result with respect to the weighted spaces. It cannot be generalized to more general weights, which is demonstrated by the theory for the Laplace equation in the whole $\setR^n$, where
one can prove that \eqref{apriori3} holds in general if and only if $\omega \in \Acal_p$. This  follows from the singular integral representation of the solution and the fundamental result of Muckenhoupt \cite{Mu72} on the continuity of the maximal function in weighted spaces.

At this point we wish to present the following corollary of Theorem~\ref{T3}. It shows, that if $f\in L^q(\Omega; \mathbb{R}^{n\times N})$ the solution constructed by Theorem~\ref{T3} implies an estimate in terms of a Hilbert space which therefore inherits the spirit of duality. Denoting by $Mf$,  the Hardy--Littlewood maximal function (see the next section for the precise definition), we have the following corollary.
\begin{corollary}\label{C2}
Let $\Omega$ be a bounded $\mathcal{C}^1$-domain and $A$ satisfy \eqref{2Car1}--\eqref{monotone} and \eqref{ass:A}. Then for any $f\in L^q(\Omega; \mathbb{R}^{n\times N})$ with $q\in (1,2]$, there exists $u\in W^{1,q}_0(\Omega;\mathbb{R}^{N})$ satisfying \eqref{WFG}. Moreover, any very weak solution $\tilde{u}\in W^{1,\tilde{q}}_0(\Omega; \setR^N)$ with some $\tilde{q}>1$ fulfilling \eqref{WFG}, satisfies the following estimate
\begin{equation}
\label{apriori2}
\int_{\Omega} \frac{|\nabla \tilde{u}|^2}{(1+Mf)^{2-q}}\dx \le C(A,q,\Omega,f) \left( 1 + \int_{\Omega}|f|^q\dx \right).
\end{equation}
\end{corollary}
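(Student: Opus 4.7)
The plan is to deduce Corollary~\ref{C2} directly from Theorem~\ref{T3} by choosing the weight $\omega := (1+Mf)^{q-2}$ and the exponent $p = 2$. The three things that must be verified are: (i) this $\omega$ is an $\Acal_2$ weight; (ii) the weighted norm of $f$ is controlled by the unweighted $L^q$ norm; (iii) the resulting weighted $L^2$ bound on $\nabla u$ upgrades to an unweighted $L^q$ bound, so that $u \in W^{1,q}_0$.

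For (i), I would use the self-duality of $\Acal_2$: $\omega \in \Acal_2$ if and only if $\omega^{-1} = (1+Mf)^{2-q} \in \Acal_2$. Since $2-q \in [0,1)$, the Coifman--Rochberg theorem gives $(Mf)^{2-q} \in \Acal_1$ with constant depending only on $2-q$; together with $1 \in \Acal_1$ and the pointwise comparability $(1+Mf)^{2-q} \sim 1+(Mf)^{2-q}$ (valid because $(a+b)^s \le a^s + b^s$ and $(1+Mf)^s \ge \max(1,(Mf)^s)$ for $s \in [0,1]$), this puts $(1+Mf)^{2-q}$ in $\Acal_1 \subset \Acal_2$, hence $\omega \in \Acal_2$. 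For (ii), the Lebesgue differentiation theorem yields $|f| \le 1+Mf$ a.e., so
\[
|f|^2 (1+Mf)^{q-2} = |f|^q \cdot |f|^{2-q}(1+Mf)^{q-2} \le |f|^q,
\]
which gives $\int_\Omega |f|^2 \omega \dx \le \int_\Omega |f|^q \dx$.

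At this stage Theorem~\ref{T3} is applicable. The existence of a $u \in W^{1,1}_0(\Omega;\setR^N)$ solving \eqref{WFG} follows from Theorem~\ref{T3} applied with $p_0 = q$ and $\omega_0 \equiv 1 \in \Acal_q$. Then, since the right-hand side $\int_\Omega |f|^2 \omega \dx$ has just been shown to be finite, for every very weak solution $\tilde u \in W^{1,\tilde q}_0(\Omega;\setR^N)$ with $\tilde q > 1$ the estimate \eqref{apriori3} specialises to
\[
\int_\Omega \frac{|\nabla \tilde u|^2}{(1+Mf)^{2-q}}\dx \le C\Big(1 + \int_\Omega |f|^q \dx \Big),
\]
which is exactly \eqref{apriori2}.

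For (iii), the splitting
\[
|\nabla u|^q = \Big(\frac{|\nabla u|^2}{(1+Mf)^{2-q}}\Big)^{\!q/2}\! (1+Mf)^{q(2-q)/2}
\]
combined with H\"older's inequality (exponents $2/q$ and $2/(2-q)$) gives
\[
\int_\Omega |\nabla u|^q\dx \le \Big(\int_\Omega \frac{|\nabla u|^2}{(1+Mf)^{2-q}}\dx\Big)^{\!q/2}\Big(\int_\Omega (1+Mf)^q \dx\Big)^{\!(2-q)/2},
\]
and the last factor is finite by the $L^q$-boundedness of the Hardy--Littlewood maximal operator for $q>1$. This promotes $u$ to $W^{1,q}_0(\Omega;\setR^N)$ and completes the proof. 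The only non-routine point is step (i), the verification that $(1+Mf)^{q-2} \in \Acal_2$; everything else is book-keeping once one has chosen the right weight.
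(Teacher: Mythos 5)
Your proposal is correct and follows essentially the same route as the paper: both choose the weight $\omega=(1+Mf)^{q-2}$, verify it lies in $\Acal_2$ (the paper via its Lemma on powers of maximal functions, you via Coifman--Rochberg plus self-duality, which is the same fact), and then specialise the weighted estimate of Theorem~\ref{T3}. Your pointwise bound $|f|^2\omega\le|f|^q$ and the H\"older step recovering the $L^q$ bound on $\nabla u$ are harmless minor variants of the paper's bookkeeping.
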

As mentioned above, the estimate \eqref{apriori2} preserves the natural duality pairing in terms of weighted $L^2$ spaces, which may be of an importance in the numerical analysis. Further, as will be seen in the proof, the estimate \eqref{apriori2} plays the key role in the convergence analysis of approximate solutions to the desired one.


Next, we formulate new results that are, on the one hand essential for the proof of Theorem~\ref{T3} and  Theorem~\ref{T1}, but on the other hand of independent interest in the fields of harmonic analysis and the compensated compactness theory. These results are mainly related to two critical problems: first,  to the a~priori estimate \eqref{apriori3} and the second one, to the stability of the nonlinearity $A(x,\nabla u)$ under the weak convergence of $\nabla u$. To solve the first problem, we use the linear system as comparison to provide \eqref{apriori3}. The weighted theory for linear problems is known for $\Omega=\setR^n$ in case of constant coefficients (see e.g. \cite[p. 244]{CoiFef74}) but seems to be missing for bounded domains and linear operators continuously depending on $x$. Therefore, another essential contribution of this paper is the following theorem.
\begin{theorem}
\label{thm:ell}
Let $\Omega \subset \setR^n$ be a bounded $\mathcal{C}^1$-domain, $\omega \in \Acal_p $ for some $p\in (1,\infty)$ be arbitrary and $\tilde{A}\in \mathcal{C}(\overline{\Omega}; \setR^{n\times N\times n \times N})$ satisfy for all $z\in \setR^{n\times N}$ and all $x\in \overline{\Omega}$
\begin{equation}
c_1|\eta|^2\le \tilde{A}(x)\eta \cdot \eta \le c_2|\eta|^2
\end{equation}
with some positive constants $c_1$ and $c_2$. Then for any $f\in L^p_w(\Omega;\setR^{n\times N})$ there exists unique $v\in W^{1,1}_0(\Omega; \setR^N)$ solving
\begin{equation}
\int_{\Omega} \tilde{A}(x)\nabla v(x) \cdot \nabla \phi(x) \dx = \int_{\Omega} f(x) \cdot \nabla \phi(x) \dx \qquad \textrm{ for all } \phi\in \mathcal{C}^{0,1}_0(\Omega; \setR^N) \label{weakfppois}
\end{equation}
and fulfilling
\begin{equation}\label{keyest}
\int_\Omega \abs{\nabla v}^p\omega \dx\leq C(\Omega, \Acal_p(\omega),p,c_1,c_2)\int_\Omega \abs{f}^p\omega \dx.
\end{equation}
In addition, if  $\bar{v}\in W^{1,q}_0(\Omega; \mathbb{R}^N)$ for some $q>1$ fulfills \eqref{weakfppois} then $\bar{v}=v$.
\end{theorem}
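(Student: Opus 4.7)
The plan is to reduce the problem on $\Omega$ with variable continuous coefficients to the constant-coefficient case on $\setR^n$ (and the half-space near the boundary), where the weighted $L^p$ estimate reduces to Calder\'on--Zygmund theory. For frozen $\tilde A_0$, the solution $v$ on $\setR^n$ of $-\divergence(\tilde A_0\nabla v)=-\divergence f$ satisfies $\nabla v = T f$, where $T$ is a matrix-valued Calder\'on--Zygmund operator whose kernel is $\nabla^2 \Gamma_{\tilde A_0}$ with $\Gamma_{\tilde A_0}$ the fundamental solution of $-\divergence(\tilde A_0\nabla \cdot)$. The Coifman--Fefferman theorem then delivers
\[
\int_{\setR^n}|\nabla v|^p\omega\dx\le C\int_{\setR^n}|f|^p\omega\dx
\]
for every $p\in(1,\infty)$ and $\omega\in\Acal_p$, with constant depending only on $c_1,c_2,p$ and $\Acal_p(\omega)$. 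The analogous half-space bound follows by odd/even reflection of the normal and tangential components.

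To upgrade to variable coefficients on the $\mathcal{C}^1$-domain $\Omega$, I would use uniform continuity of $\tilde A$ together with a localization argument. Given $\delta>0$, cover $\overline\Omega$ by finitely many balls $B_i$ of radius small enough that the oscillation of $\tilde A$ on each $B_i$ is at most $\delta$; interior balls stay inside $\Omega$, while boundary balls are straightened by a $\mathcal{C}^1$-diffeomorphism, giving pulled-back coefficients $\tilde B_i$ on a half-ball that are again continuous, uniformly elliptic and $\delta$-close to constants. Let $\{\eta_i\}$ be a subordinate partition of unity. For each $i$, apply the frozen-coefficient estimate to $\eta_i v$ with coefficients frozen at the centre of $B_i$; the commutator terms arising from $\nabla\eta_i$ are lower order (treated by a Hardy/interpolation argument involving the weight) and the perturbation error $\|(\tilde A-\tilde A_i)\nabla v\|_{L^p_\omega(B_i)}\le C\delta\|\nabla v\|_{L^p_\omega(B_i)}$ is absorbed into the left-hand side after summing in $i$ and choosing $\delta$ small enough. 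A key point is that the $\Acal_p$-constant is invariant, up to multiplicative constants, under bi-Lipschitz changes of variable, so the flattening diffeomorphism is compatible with the weighted framework.

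Existence is then obtained as follows. For $f\in L^2$ the Lax--Milgram lemma produces a unique $v\in W^{1,2}_0$. For general $f\in L^p_\omega$, approximate by $f_k\in L^2\cap L^p_\omega$ with $f_k\to f$ in $L^p_\omega$, and use \eqref{keyest} on the resulting sequence $v_k$ to pass to the limit in $W^{1,p}_\omega$. For uniqueness, if $\bar v\in W^{1,q}_0$ satisfies \eqref{weakfppois} with $f=0$, fix an arbitrary $g\in\mathcal{C}^\infty_c(\Omega;\setR^{n\times N})$ and apply the unweighted version of the just-proved estimate to the adjoint problem $-\divergence(\tilde A^T\nabla w)=-\divergence g$, producing $w\in W^{1,q'}_0$. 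Density of $\mathcal{C}^{0,1}_0$ in $W^{1,q'}_0$ on a $\mathcal{C}^1$-domain allows $w$ to be used as a test function in the equation for $\bar v$, and $\bar v$ to be used as a test function in the equation for $w$; subtracting the two identities yields $\int_\Omega g\cdot\nabla\bar v\dx=0$ for all such $g$, whence $\nabla\bar v\equiv0$ and $\bar v\equiv 0$.

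The main technical obstacle is the absorption step in the second paragraph: one has to track how the constant in the Coifman--Fefferman bound depends on $\Acal_p(\omega)$ and show that this dependence is stable under the bi-Lipschitz flattening and under the finite cover, so that a single choice of $\delta$ (and hence of the cover) works uniformly over all weights with a fixed $\Acal_p$-bound. The $\mathcal{C}^1$-regularity of $\partial\Omega$ is exactly what is needed to make this global absorption succeed.
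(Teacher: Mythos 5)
Your architecture is genuinely different from the paper's: you want a singular--integral representation for frozen coefficients plus Coifman--Fefferman, and then a partition-of-unity perturbation argument, whereas the paper runs a good-$\lambda$/level-set comparison argument (Calder\'on--Zygmund covering of the level sets of a maximal function of $\nabla v$, comparison with constant-coefficient solutions on each covering ball, weak Harnack, and a redistributional estimate), followed by a reflection across the $\mathcal{C}^1$ boundary. The difference is not cosmetic, because your localization step has a genuine gap. When you test the equation against $\eta_i\varphi$ (equivalently, write the equation for $\eta_i v$), you produce, besides the absorbable perturbation $(\tilde A-\tilde A_i)\nabla v$, the commutator terms $\tilde A\nabla v\cdot\nabla\eta_i$ and $f\cdot\nabla\eta_i$, which are not in divergence form. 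To feed them into a $-\divergence f$--type estimate you must represent them as divergences, e.g.\ via a Newtonian potential or a Bogovskii operator; this gains one derivative and a factor $\sim r_i$, but $\norm{\nabla\eta_i}_\infty\sim r_i^{-1}$, so the net contribution is $C\norm{\nabla v}_{L^p_\omega(B_i)}$ with $C$ of order one, independent of $r_i$ and of $\delta$. It can be neither absorbed nor dismissed as ``lower order'' in the weighted norm; a Hardy/interpolation argument does not produce a small constant here. So the obstacle you flag (stability of the $\Acal_p$-dependence under the cover and the flattening) is not the real one --- that part is harmless. The missing idea is that the localization error has to land in a norm that is \emph{already globally controlled}, namely the unweighted $L^{\tilde q}$ norm of $\nabla v$ furnished by the classical estimate for continuous coefficients on $\mathcal{C}^1$ domains (Lemma~\ref{lem:CZ}). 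This is precisely how the paper's local estimate \eqref{unlocal} is structured: weighted norm of $f$ plus $\omega(2B)^{1/p}$ times the unweighted $L^{\tilde q}$ average of $\nabla u$, with the good-$\lambda$ machinery being the device that keeps the error in that weaker norm.

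Two further points. First, even where absorption is legitimate in principle, it requires knowing a priori that $\int_\Omega\abs{\nabla v}^p\omega\dx<\infty$; this is not automatic for $f\in L^2\cap L^p_\omega$, and must be secured for the approximating sequence (e.g.\ for bounded $f_k$ one has $\nabla v_k\in L^s$ for all $s<\infty$ by Lemma~\ref{lem:CZ}, and $\omega\in L^{1+\epsilon}_{\loc}$ by reverse H\"older, so the weighted norm is finite); the paper's level-set proof instead truncates the $\lambda$-integral at a finite level $m$ before absorbing. Second, ``odd/even reflection of the normal and tangential components'' does not preserve the equation for a general constant tensor $\tilde A_0$ with normal--tangential coupling: the reflected problem has coefficients $J\tilde A_0 J^T$ on the lower half-space, which differ from $\tilde A_0$ by an $O(1)$ jump, so the half-space case needs either the explicit Green tensor or the paper's device of treating the reflected coefficients as a new (near-constant on each side) coefficient field. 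Your duality argument for the uniqueness statement is correct and is essentially the standard argument underlying the uniqueness part of Lemma~\ref{lem:CZ}, which is what the paper invokes.
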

We wish to point out, that we include natural local weighted estimates in the interior as well as on the boundary which are certainly of independent interrest (see Lemma~\ref{thm:ell-local} and Lemma~\ref{lem:halfball}).

The second obstacle we have to deal with is an identification of the weak limit and for this purpose we invent a generalization of the celebrated div--curl lemma.
\begin{theorem}[weighted, biting div--curl lemma]\label{T5}
  Let $\Omega\subset \setR^n$ be an open bounded set.
  Assume that for some $p\in (1,\infty)$ and given $\omega \in
  \Acal_p$ we have a sequence of vector-valued measurable functions $(a^k, b^k)_{k=1}^{\infty}:\Omega \to \mathbb{R}^n\times \mathbb{R}^n$ such that
  \begin{equation}
    \sup_{k\in \mathbb{N}}\int_{\Omega} \abs{a^k}^p\omega + \abs{b^k}^{p'}\omega \dx <\infty. \label{bit3}
  \end{equation}
  Furthermore, assume that for every bounded sequence
  $\{c^k\}_{k=1}^{\infty}$ from $W^{1,\infty}_0(\Omega)$ that fulfills
  $$
  \nabla c^k \rightharpoonup^* 0 \qquad \textrm{weakly$^*$ in }
  L^{\infty}(\Omega)
  $$
  there holds
  \begin{align}
    \lim_{k\to \infty} \int_{\Omega} b^k \cdot \nabla c^k \dx
    &=0, \label{bit4}
    \\
    \lim_{k\to \infty} \int_{\Omega} a^k_i \partial_{x_j} c^k -
    a^k_j \partial_{x_i} c^k \dx &=0 &&\textrm{for all }
    i,j=1,\ldots,n.\label{bit5}
  \end{align}
  Then there exists a subsequence $(a^k,b^k)$ that we do not relabel
  and there exists a non-decreasing sequence of measurable subsets
  $E_j\subset\Omega$ with $|\Omega \setminus E_j|\to 0$ as $j\to
  \infty$ such that
  \begin{align}
  a^k &\rightharpoonup a &&\textrm{weakly in } L^1(\Omega;\mathbb{R}^n), \label{bitfa}\\
  b^k &\rightharpoonup b &&\textrm{weakly in } L^1(\Omega;\mathbb{R}^n), \label{bitfb}\\
  a^k \cdot b^k \omega &\rightharpoonup a \cdot b\, \omega &&\textrm{weakly in } L^1(E_j) \quad \textrm{ for all } j\in \mathbb{N}. \label{bitf}
  \end{align}
\end{theorem}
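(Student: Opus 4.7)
The plan is to (a) extract weak limits in the reflexive weighted Lebesgue spaces, (b) apply Chacon's biting lemma to convert the $L^1$-boundedness of the scalar product into weak $L^1$ compactness on an exhausting family of sets, and (c) identify the weak $L^1$ limit by a local Helmholtz decomposition combined with a Lipschitz truncation that is stable in weighted Sobolev spaces (one of the new tools announced in the introduction).

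\emph{Weak compactness and biting.} Since $\omega\in\Acal_p$, the space $L^p_\omega(\Omega)$ is reflexive and embeds continuously into $L^1_{\mathrm{loc}}(\Omega)$ via H\"older with the dual weight $\omega^{1-p'}\in\Acal_{p'}$. From \eqref{bit3} extract a subsequence along which $a^k\rightharpoonup a$ in $L^p_\omega$ and $b^k\rightharpoonup b$ in $L^{p'}_\omega$, which gives \eqref{bitfa}--\eqref{bitfb}. The weighted H\"older inequality bounds $a^k\cdot b^k\,\omega$ uniformly in $L^1(\Omega)$. Simultaneous application of Chacon's biting lemma to $a^k\cdot b^k\,\omega$, $|a^k|^p\omega$ and $|b^k|^{p'}\omega$, combined with Dunford--Pettis and a diagonal extraction, produces a further (unrelabeled) subsequence and a nondecreasing family $E_j\subset\Omega$ with $|\Omega\setminus E_j|\to 0$ on which the three sequences are equiintegrable; intersecting with $\{1/j\le\omega\le j\}$ one may in addition assume $\omega$ bounded away from $0$ and $\infty$ on $E_j$. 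Hence $a^k\cdot b^k\,\omega\rightharpoonup h$ weakly in $L^1(E_j)$ for some $h$, and the task reduces to showing $h=a\cdot b\,\omega$.

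\emph{Local decomposition and Lipschitz truncation.} Fix a ball $B\subset\subset\Omega$ and $\psi\in C_0^\infty(B)$ with $\operatorname{supp}\psi\subset E_{j_0}$. On $B$ let $\phi^k\in W^{1,1}_0(B)$ be the solution of $-\Delta\phi^k=-\divergence a^k$ supplied by the weighted elliptic theory of Theorem~\ref{thm:ell}, so that $a^k=\nabla\phi^k+\rho^k$ with $\divergence\rho^k=0$. The hypothesis \eqref{bit5} controls $\operatorname{curl}\rho^k=\operatorname{curl}a^k$ in the dual sense furnished by $W^{1,\infty}_0$ test functions; interior regularity for the identity $\Delta\rho^k=-\operatorname{curl}\operatorname{curl}\rho^k$, together with Rellich compactness, then forces $\rho^k\to\rho$ strongly and $\phi^k\to\phi$ strongly in $L^p_\omega(\operatorname{supp}\psi)$ along a further subsequence (recall $\omega$ is bounded there). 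The $\rho^k$-contribution to $\int_B a^k\cdot b^k\,\omega\,\psi\dx$ therefore passes to the limit by strong--weak pairing. For the remaining piece $\int_B\nabla\phi^k\cdot b^k\,\omega\,\psi\dx$, introduce a Lipschitz truncation $\phi^k_\lambda$ of $\phi^k$ — stable in weighted norms — that equals $\phi^k$ off the bad set $\{M(\nabla\phi^k)>\lambda\}$ and satisfies $\|\nabla\phi^k_\lambda\|_\infty\le C\lambda$. Splitting the integral into a good-set contribution and a bad-set error, and using $c^k=(\phi^k_\lambda-\phi_\lambda)\psi\in W^{1,\infty}_0(\Omega)$ with $\nabla c^k\rightharpoonup^{*}0$ as a test in \eqref{bit4}, passes the good-set part to the limit for each fixed $\lambda$; the strong convergence of $\phi^k_\lambda$ handles the lower-order product $\phi^k_\lambda\nabla\psi\cdot b^k$.

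\emph{Main obstacle.} The delicate step is the bad-set error, which must be made small independently of $k$ as $\lambda\to\infty$ and, moreover, must be compatible with inserting the Muckenhoupt weight $\omega$. The boundedness of $\omega$ on $E_{j_0}$ lets one pass between the weighted pairing $\int\,\cdot\,\omega\,\psi$ and the unweighted one demanded by \eqref{bit4} at the price of a constant depending on $j_0$; the measure of $\{M(\nabla\phi^k)>\lambda\}$ tends to $0$ uniformly in $k$ as $\lambda\to\infty$, and the equiintegrability of $|a^k|^p\omega$ and $|b^k|^{p'}\omega$ on $E_{j_0}$ — the very reason biting is invoked — lets the bad-set error be dominated by a quantity that vanishes in the iterated limit $k\to\infty$ followed by $\lambda\to\infty$. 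Identification of $h=a\cdot b\,\omega$ on each $E_j$ then gives \eqref{bitf}. The weight-stable Lipschitz truncation and its dovetailing with the biting structure is the principal technical novelty required; a standard unweighted truncation would leave a residual that blows up against the Muckenhoupt factor.
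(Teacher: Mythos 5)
Your strategy is essentially the one the paper uses: Chacon's biting lemma to produce the sets $E_j$ and the equi-integrability of $\abs{a^k}^p\omega$, $\abs{b^k}^{p'}\omega$ and $a^k\cdot b^k\omega$; a Helmholtz-type splitting of $a^k$ into a gradient part and a solenoidal part; compactness of the solenoidal part via the curl hypothesis \eqref{bit5} and interior elliptic regularity; and the weighted Lipschitz truncation of the potential of the gradient part, tested in \eqref{bit4}, to identify the remaining limit. (The paper works with the global Newtonian potential $d^k=a^k*G$ on $\setR^n$ and writes $a^k=(a^k-\nabla\divergence d^k)+\nabla\divergence d^k$ rather than solving a Dirichlet problem on a ball, but this is cosmetic.) One genuine and worthwhile difference: your device of intersecting $E_j$ with $\set{1/j\le\omega\le j}$ so that $\omega$ is bounded above and below on each $E_j$ is legitimate (the exceptional set is null in the limit, and weak $L^1$ convergence restricts to measurable subsets) and it replaces the paper's more laborious regularization with the bounded multiplier $\varepsilon\omega^2/(1+\varepsilon\omega)$ followed by $\varepsilon\to0_+$; this is a cleaner way to reconcile the unweighted hypothesis \eqref{bit4} with the weighted conclusion \eqref{bitf}.

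There is, however, one step that is overstated and would fail as written: the claim that interior regularity plus Rellich give $\rho^k\to\rho$ \emph{strongly in $L^p_\omega(\supp\psi)$}, so that the solenoidal contribution passes to the limit by strong--weak pairing. The curl condition \eqref{bit5} combined with compact embeddings only yields strong convergence of the solenoidal part in $L^r_{\loc}$ for exponents $r$ strictly below the unweighted integrability exponent $q<p$ furnished by \eqref{eq:lqprop}; there is no mechanism to upgrade this to the full $L^p_\omega$ norm, since the available uniform bound lives exactly in $L^p_\omega$ and equi-integrability of $\abs{\rho^k}^p$ does not follow from that of $\abs{a^k}^p\omega$ (the decomposition need not preserve it). The correct, and still elementary, repair is the one the paper uses: from the $L^1_{\loc}$ (hence, along a subsequence, a.e.) convergence of the solenoidal part, together with the bound
\begin{align*}
\int_{U}\abs{b^k\cdot(\rho^k-\rho)}\,\omega\dx\le \norm{\rho^k-\rho}_{L^p_\omega}\Big(\int_U\abs{b^k}^{p'}\omega\dx\Big)^{1/p'}
\end{align*}
and the equi-integrability of $\abs{b^k}^{p'}\omega$ on $E_j$ coming from the biting lemma, Vitali's convergence theorem gives the passage to the limit of $\int_{E_j}b^k\cdot\rho^k\,\omega\varphi\dx$. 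With that substitution (and an explicit final step showing that the weak limits $\nabla\phi_\lambda$ of the truncations converge to $\nabla\phi$ as $\lambda\to\infty$, which follows from the uniform smallness of $\abs{\set{M(\nabla\phi^k)>\lambda}}$), your argument closes.
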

The original version of this lemma, firstly invented by Murat and Tartar, see \cite{Mu78,Mu81,Ta78,Ta79}, was designed to identify many types of nonlinearities appearing in many types of partial differential equations. However, they assumed stronger assumptions on $a^k$ and $b^k$ than \eqref{bit4}--\eqref{bit5}, which lead to \eqref{bitf} for $E_j\equiv \Omega$.  To be more specific, they did not assume weighted spaces and considered $\omega \equiv 1$ and they required that \eqref{bit4} holds for any $c^k$ converging weakly in $W^{1,p}$ and \eqref{bit5} for any $c^k$ converging weakly in $W^{1,p'}$, respectively. The first result going more into the spirit of Theorem~\ref{T5} is due to \cite{CoDoMu11}, where the authors worked with $\omega \equiv 1$ and kept \eqref{bit4}--\eqref{bit5} but assumed the equi-integrability of the sequence $a^k\cdot b^k$. Such a result is then based on the proper use of the Lipschitz approximation of Sobolev
functions invented in \cite{AceF84}, which we shall use here as well. However, to obtain the result of Theorem~\ref{T5}, we must first use the Chacon biting lemma \cite{BroCha80,BallMurat:89}, and also improve the Lipschitz approximation method into the framework of weighted spaces, which is yet another essential result of the paper.
\begin{theorem}[Lipschitz approximation]
  \label{thm:liptrunc}
  Let $\Omega \subset \setR^n$ be an open set with Lipschitz
  boundary. Let $g \in W^{1,1}_0(\Omega; \setR^N)$. Then for
  all $\lambda >0$ there exists a \emph{Lipschitz truncation} $g^\lambda \in
  W^{1,\infty}_0(\Omega; \setR^N)$ such that
  \begin{alignat}{2}
    \label{eq:lip1}
    g^\lambda &= g \quad \text{and} \quad \nabla g^\lambda = \nabla g
    &\qquad&\text{in $\set{M(\nabla g)\leq \lambda}$ },
    \\
    \label{eq:lip2}
    \abs{\nabla g^\lambda} &\leq \abs{\nabla g}\chi_{\set{M(\nabla
        g)\leq\lambda}}+C\, \lambda
    \chi_{\set{M(\nabla g)>\lambda}} &&\textrm{almost
      everywhere}.
  \end{alignat}
  Further, if $\nabla g\in L^{p}_\omega(\Omega;\setR^{n\times N})$ for
  some $1\leq p<\infty$ and $\omega \in \mathcal{A}_p$, then
  \begin{align}\label{itm:weight}
    \begin{aligned}
      \int_{\Omega}\abs{\nabla g^\lambda}^p \omega \dx &\leq
      C(\mathcal{A}_p(\Omega),\Omega, N,p)\int_{\Omega} \abs{\nabla g}^p \omega \dx,
      \\
      \int_{\Omega} \abs{\nabla (g-g^\lambda)}^p \omega \dx&\leq
      C(\mathcal{A}_p(\Omega),\Omega, N, p) \int_{\Omega \cap \set{M(\nabla g)>\lambda}} \abs{\nabla g}^p \omega \dx.
    \end{aligned}
  \end{align}
\end{theorem}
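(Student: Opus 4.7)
I would reproduce the Acerbi--Fusco Lipschitz truncation via a Whitney decomposition of the bad set $B_\lambda:=\{M(\nabla g)>\lambda\}$ and derive the weighted bounds from the pointwise comparison $|\nabla g^\lambda|\le C\,M(\nabla g)$ combined with the weighted weak $(p,p)$ estimate for the Hardy--Littlewood maximal operator, which is valid whenever $\omega\in\mathcal A_p$ and $p\ge 1$.

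First, extend $g$ by zero to $\tilde g\in W^{1,1}(\setR^n;\setR^N)$, which is legitimate because $\Omega$ is Lipschitz and $g\in W^{1,1}_0(\Omega)$. Set $B_\lambda:=\{M(\nabla\tilde g)>\lambda\}$; this set is open. Take a Whitney decomposition $\{Q_i\}$ of $B_\lambda$ with a smooth partition of unity $\{\varphi_i\}$ subordinate to the dilated family $\{\tfrac98 Q_i\}$, and put
\[
    g^\lambda(x):=\begin{cases}
        \tilde g(x), & x\notin B_\lambda,\\
        \sum_i\varphi_i(x)\,\mean{\tilde g}_{\frac98 Q_i}, & x\in B_\lambda.
    \end{cases}
\]
The pointwise Haj\l{}asz-type estimate $|\tilde g(x)-\tilde g(y)|\le C|x-y|\bigl(M(\nabla\tilde g)(x)+M(\nabla\tilde g)(y)\bigr)$ shows that $\tilde g$ agrees on $B_\lambda^c$ with a $C\lambda$-Lipschitz function, while a bounded-overlap Poincar\'e calculation on each $\tfrac98 Q_i$ (using the Whitney property $\ell(Q_i)\simeq\operatorname{dist}(Q_i,B_\lambda^c)$ to control the averages through a point of $B_\lambda^c$ where $M(\nabla\tilde g)\le\lambda$) gives $|\nabla g^\lambda|\le C\lambda$ on $B_\lambda$. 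This yields \eqref{eq:lip1}--\eqref{eq:lip2}. For the boundary condition, the $\mathcal C^{0,1}$ regularity of $\partial\Omega$ implies $\ell(Q_i)\simeq\operatorname{dist}(Q_i,\partial\Omega)$ for Whitney cubes with $\tfrac98 Q_i\cap\Omega^c\neq\emptyset$, and Poincar\'e together with $\tilde g|_{\Omega^c}=0$ forces $|\mean{\tilde g}_{\frac98 Q_i}|\to 0$ as $Q_i\to\partial\Omega$; a standard refinement killing the averages on cubes exiting $\Omega$ produces $g^\lambda\in W^{1,\infty}_0(\Omega;\setR^N)$.

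The weighted estimates now follow mechanically from \eqref{eq:lip2}. For the first, I would split as
\[
\int_\Omega|\nabla g^\lambda|^p\omega\dx\le\int_{\{M(\nabla g)\le\lambda\}}|\nabla g|^p\omega\dx+C\lambda^p\omega(B_\lambda),
\]
and control $\lambda^p\omega(B_\lambda)$ by the weighted weak-type inequality $\omega(\{Mf>\lambda\})\le C\lambda^{-p}\int|f|^p\omega\dx$ applied to $f=\nabla g$. For the truncation estimate, $\nabla(g-g^\lambda)$ vanishes off $B_\lambda$, so it suffices to bound $\int_{B_\lambda}(|\nabla g|^p+\lambda^p)\omega\dx$; the first summand is exactly the right-hand side of \eqref{itm:weight}, while for the second I would decompose $\nabla g=\nabla g\,\chi_{\{|\nabla g|\le\lambda/2\}}+\nabla g\,\chi_{\{|\nabla g|>\lambda/2\}}$. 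Since $M$ of the first piece is bounded pointwise by $\lambda/2$, one gets $B_\lambda\subset\{M(\nabla g\chi_{\{|\nabla g|>\lambda/2\}})>\lambda/2\}$, and the weak-type inequality applied to the second piece yields $\lambda^p\omega(B_\lambda)\le C\int_{\{|\nabla g|>\lambda/2\}}|\nabla g|^p\omega\dx$; since $\{|\nabla g|>\lambda/2\}\subset\{M(\nabla g)>\lambda/2\}$ up to a null set, a cosmetic rescaling of $\lambda$ absorbed into the constant $C$ reproduces the statement with the set $\{M(\nabla g)>\lambda\}$.

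The delicate point, on which I would spend most of the care, is producing the zero boundary values: the Whitney cubes that straddle $\partial\Omega$ must be treated with attention to the Lipschitz geometry so that $g^\lambda$ lies in $W^{1,\infty}_0(\Omega)$ rather than merely $W^{1,\infty}(\Omega)$, and one must verify that this modification does not deteriorate the pointwise bound \eqref{eq:lip2}. By contrast, once $|\nabla g^\lambda|\le C\,M(\nabla g)$ is in hand, the weighted estimates are a routine consequence of the weighted weak-type bound for $M$ and require no new ideas.
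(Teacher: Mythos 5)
Your construction of $g^\lambda$ via a Whitney decomposition of $\set{M(\nabla g)>\lambda}$ and the derivation of \eqref{eq:lip1}--\eqref{eq:lip2} follow the standard Acerbi--Fusco route; the paper simply outsources this part to \cite[Theorem~13]{DieKreSul12}, and you correctly single out the boundary cubes as the delicate point of that construction. Your proof of the first inequality in \eqref{itm:weight} via the weighted weak type $(p,p)$ bound for $M$ (valid for $\omega\in\mathcal{A}_p$, $1\le p<\infty$) is also sound; the paper instead deduces the first inequality from the second.

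The gap is in the final step of the second inequality. From the splitting $\nabla g=\nabla g\,\chi_{\set{\abs{\nabla g}\le\lambda/2}}+\nabla g\,\chi_{\set{\abs{\nabla g}>\lambda/2}}$ and the weak type bound you correctly obtain $\lambda^p\omega(\set{M(\nabla g)>\lambda})\le C\int_{\set{\abs{\nabla g}>\lambda/2}}\abs{\nabla g}^p\omega\dx$. But $\set{\abs{\nabla g}>\lambda/2}$ is only contained in $\set{M(\nabla g)>\lambda/2}$, which strictly contains the target set $\set{M(\nabla g)>\lambda}$: where, say, $\abs{\nabla g}\equiv\tfrac34\lambda$ on a large ball one has $\abs{\nabla g}>\lambda/2$ but $M(\nabla g)<\lambda$. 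A ``cosmetic rescaling absorbed into $C$'' cannot repair a set inclusion that points the wrong way --- enlarging the constant does not shrink the domain of integration, and replacing $\lambda$ by $2\lambda$ changes the truncation $g^\lambda$ on the left-hand side as well. What your argument actually yields is the weaker estimate with $\set{M(\nabla g)>\lambda/2}$ on the right (which would in fact suffice for the applications in this paper, but is not the stated theorem). The paper avoids this loss by a different mechanism: it covers $\set{M(\nabla g)>\lambda}$ by balls $B_i$ with $\lambda<\dashint_{B_i}\abs{\nabla g}\dx\le 2\lambda$ (Besicovitch, finite overlap) and converts $\lambda^p\omega(B_i)\le\big(\dashint_{B_i}\abs{\nabla g}\dx\big)^p\omega(B_i)$ into $\mathcal{A}_p(\omega)\int_{B_i}\abs{\nabla g}^p\omega\dx$ by H\"older and the $\mathcal{A}_p$ condition; since these balls lie in the level set, the right-hand side localizes to exactly $\set{M(\nabla g)>\lambda}$. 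Alternatively you could keep your weak-type argument but declare the truncation at level $\lambda$ to be the Acerbi--Fusco truncation at level $2\lambda$ and re-verify \eqref{eq:lip1}--\eqref{eq:lip2}; either way, the step as written needs repair.
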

This result has its origin in the paper \cite{AceF88}. The approach
was considerably improved and successfully used for the existence
theory in the context of fluid mechanics, see
e.g. \cite{FreMalSte00,DieMS08,DieKreSul12,Die13paseky} or \cite{BreDieFuc12,BreDieSch13} for divergence-free Lipschitz approximation. However, these results do not contain the weighted estimates~\eqref{itm:weight} and for this reason we also provide its proof in this paper.

Finally, for the sake of completeness, we present straightforward generalizations of the above results. First, we establish the theory for nonhomogeneous Dirichlet problem.
\begin{theorem}\label{T3D}
  Let $\Omega$ be a bounded $\mathcal{C}^1$-domain and $A$ satisfy
  \eqref{2Car1}--\eqref{monotone} and \eqref{ass:A} and let $f
  \in L^{p_0}_{\omega_0}(\Omega; \setR^{n\times N})$ and $u_0\in W^{1,1}(\Omega;\setR^N)$ be such that $\nabla u_0 \in L^{p_0}_{\omega_0}(\Omega;\setR^{n\times N})$ for some $p_0 \in (1,\infty)$ and
  $\omega_0 \in \mathcal{A}_{p_0}$. Then there exists a solution $u$ of~\eqref{WFG} such that $u-u_0\in
  W^{1,1}_0(\Omega;\mathbb{R}^{n\times N})$  and
  for all $p \in (1,\infty)$ and all weights $\omega \in \Acal_p$ the
  following estimate holds
  \begin{equation}
    \label{apriori3D}
    \int_{\Omega} |\nabla u|^p\omega \dx \le C(A_p(\omega),\Omega,A,p)
    \left( 1 +  \int_{\Omega}\left(|f|^p+|\nabla u_0|^p\right) \omega\dx \right),
  \end{equation}
whenever the right hand side is finite.  Moreover, every very weak solution $u$ of~\eqref{WFG} fulfilling $\tilde{u}-u_0\in W^{1,\tilde{q}}_0(\Omega,\setR^N)$  with some $\tilde{q}>1$ satisfies \eqref{apriori3D}. In addition, if $A$ is strictly monotone and strongly asymptotically Uhlenbeck, i.e.,  \eqref{ass:AA} holds, then the solution is unique in any class $W^{1,\tilde{q}}(\Omega; \setR^N)$ with $\tilde{q}>1$.
\end{theorem}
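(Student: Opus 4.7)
The plan is to reduce the nonhomogeneous problem to the homogeneous one treated in Theorem~\ref{T3} via the standard lifting $w:=u-u_0 \in W^{1,1}_0(\Omega;\setR^N)$. Substituting $u=w+u_0$ into~\eqref{WFG} and subtracting the $\eta$-independent linear piece $\tilde A(x)\nabla u_0$ from both sides, I would rewrite the equation as
\begin{equation*}
\int_\Omega \hat A(x,\nabla w)\cdot\nabla\phi\dx = \int_\Omega F\cdot\nabla\phi\dx, \qquad \phi\in\mathcal{C}^{0,1}_0(\Omega;\setR^N),
\end{equation*}
where
\begin{equation*}
\hat A(x,\eta):=A(x,\eta+\nabla u_0(x))-\tilde A(x)\nabla u_0(x), \qquad F:=f-\tilde A(x)\nabla u_0(x).
\end{equation*}
The subtraction is arranged so that the linear-at-infinity part of $\hat A$ again passes through the origin, which will be crucial when comparing with the linear operator $\tilde A(x)$.

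Next, I would verify that $\hat A$ satisfies the hypotheses of Theorem~\ref{T3} up to error terms that are pointwise controlled by $|\nabla u_0(x)|$. Carath\'eodory measurability, continuity, and monotonicity are inherited from $A$ directly. Coercivity and growth acquire an additive perturbation of size $C(1+|\nabla u_0(x)|^2)$ respectively $C(1+|\nabla u_0(x)|)$. The asymptotic Uhlenbeck bound~\eqref{ass:A} yields, whenever $|\eta+\nabla u_0(x)|\ge k$,
\begin{equation*}
|\hat A(x,\eta)-\tilde A(x)\eta| = |A(x,\eta+\nabla u_0)-\tilde A(x)(\eta+\nabla u_0)| \le \varepsilon\,(|\eta|+|\nabla u_0(x)|),
\end{equation*}
i.e.\ a ``distorted'' version of \eqref{ass:A} in which the threshold is shifted by $|\nabla u_0(x)|$ and an additive $\varepsilon|\nabla u_0(x)|$ error is produced; the analogous bound for $\partial_\eta \hat A$ holds under \eqref{ass:AA}.

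I would then rerun the proof of Theorem~\ref{T3} for the pair $(\hat A,F)$. Each of the $|\nabla u_0|$-perturbations produced above is pointwise dominated by $|\nabla u_0(x)|$, which by assumption lies in $L^{p_0}_{\omega_0}$ and, more generally, in every $L^p_\omega$ for which the right-hand side of~\eqref{apriori3D} is finite; all such perturbations are therefore absorbed into the data side at each stage. The weighted linear comparison (Theorem~\ref{thm:ell}), the weighted biting div--curl lemma (Theorem~\ref{T5}) and the weighted Lipschitz truncation (Theorem~\ref{thm:liptrunc}) apply to $(\hat A,F)$ verbatim, yielding existence of $w\in W^{1,1}_0(\Omega;\setR^N)$ and the a~priori bound for $\nabla w$. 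Setting $u:=w+u_0$ and using $|\nabla u|\le|\nabla w|+|\nabla u_0|$ with $\nabla u_0 \in L^p_\omega$ gives~\eqref{apriori3D}. Uniqueness under \eqref{ass:AA} follows by applying the uniqueness part of Theorem~\ref{T3} to the difference $w_1-w_2=u_1-u_2\in W^{1,\tilde q}_0(\Omega;\setR^N)$ of two very weak solutions, which solves a homogeneous problem for $\hat A$ whose strict monotonicity is inherited from that of $A$.

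The main technical obstacle is that the threshold in the asymptotic Uhlenbeck condition for $\hat A$ is not uniform in $x$: the regime ``$|\eta|$ large'' must be replaced by ``$|\eta+\nabla u_0(x)|$ large'', and the Uhlenbeck error carries an additive $\varepsilon|\nabla u_0(x)|$ tail. Because this tail is dominated by the measurable function $|\nabla u_0|$ that sits in the same weighted Lebesgue class as the data, the additional difficulty is quantitative rather than qualitative; the adaptation of Theorem~\ref{T3} amounts to careful bookkeeping of lower-order terms, confirming the authors' claim that Theorem~\ref{T3D} is a straightforward generalization of Theorem~\ref{T3}.
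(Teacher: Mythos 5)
Your reduction is sound in spirit but takes a genuinely different route from the paper, and one step of it does not go through verbatim. The paper never shifts the argument of the nonlinearity: it writes $\int_\Omega\tilde A(x)\nabla(u-u_0)\cdot\nabla\varphi\dx=\int_\Omega\bigl(f-\tilde A(x)\nabla u_0+\tilde A(x)\nabla u-A(x,\nabla u)\bigr)\cdot\nabla\varphi\dx$ and applies Theorem~\ref{thm:ell} to $u-u_0$, so that \eqref{ass:A} is used on $A(x,\nabla u)$ versus $\tilde A(x)\nabla u$ with its \emph{uniform} threshold $k$, and the only new data term is $\tilde A(x)\nabla u_0$. Your substitution $\hat A(x,\eta):=A(x,\eta+\nabla u_0(x))-\tilde A(x)\nabla u_0(x)$ instead produces an operator whose coercivity, growth and Uhlenbeck conditions hold only with $x$-dependent perturbations of size $|\nabla u_0(x)|$ and with the threshold $|\eta+\nabla u_0(x)|\ge k$; you correctly identify this and the absorption of these tails into the data is indeed feasible (split according to whether $|\eta+\nabla u_0(x)|\ge k$; in the complementary region $|\hat A(x,\eta)-\tilde A(x)\eta|\le C(1+k+|\nabla u_0(x)|)$ goes to the right-hand side). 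The paper's decomposition buys exactly the avoidance of this bookkeeping; yours buys a formally homogeneous problem at the price of a rougher operator. For uniqueness your route is fine, since the conclusion of Lemma~\ref{L:algebra} concerns only differences $\eta_1-\eta_2$ and is therefore invariant under the shift by $\nabla u_0(x)$.

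The step that fails if taken literally is the construction of approximate solutions. In the existence part of Theorem~\ref{T3} one truncates the right-hand side and invokes the standard monotone operator theory in $W^{1,2}_0(\Omega;\setR^N)$. For $\hat A$ this requires $\hat A(\cdot,\nabla w)\in L^2$ and $2$-coercivity with integrable lower-order terms, i.e.\ $\nabla u_0\in L^2(\Omega;\setR^{n\times N})$ --- which is not guaranteed when $p_0<2$ and $\omega_0$ only yields $\nabla u_0\in L^{q_0}$ for some $q_0\in(1,2)$. You must therefore also approximate $u_0$ by smooth (or truncated) functions $u_0^k\to u_0$ strongly in $W^{1,\tilde q}$ and pass to the limit in this additional parameter as well; the paper points this out explicitly. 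With that amendment your argument closes.
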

Second, we remark that for the theory for \eqref{eq:sysA} the assumptions \eqref{ass:A}--\eqref{ass:AA} are not necessary and can be weakened.
\begin{remark}
At this point we wish to discuss possible relaxations of the conditions
\eqref{ass:A} and \eqref{ass:AA} which might be useful for further
application of the theory developed here. The proof of the existence or
the uniqueness do not request, that the matrix $A(x,\eta)$ is converging
uniformly to a continuous target matrix $\tilde{A}(x)$ but rather that the
two matrices are "close" for values $\abs{\eta}>k$ for some $k$. Indeed,
it is possible to quantify the necessary closeness in accordance to the
ellipticity and continuity parameters of $\tilde{A}(x)$ and
$\partial\Omega$.
A different relaxation of \eqref{ass:A} and \eqref{ass:AA} could be done
in a non-pointwise manner: By replacing the pointwise asymptotic
conditions by asymptotic conditions in terms of vanishing mean
oscillations (VMO).
\end{remark}

We conclude this section by  highlighting the essential novelties of this paper:
\begin{itemize}
\item[1)] A complete unified $W^{1,q}_0(\Omega;\mathbb{R}^N)$-theory for nonlinear elliptic systems with the asymptotic Uhlenbeck structure satisfying \eqref{2Car1}--\eqref{monotone}, \eqref{ass:A} and \eqref{ass:AA} has been was developed in such a way that the theory is identical with that one for linear operators with continuous coefficients -- Theorem~\ref{T1} and Theorem~\ref{T3D}. Moreover, the new estimate suitable  for numerical purposes is established in Corollary~\ref{C2}.
\item[2)] A maximal regularity in weighted spaces of any very weak solution is established as well as its uniqueness, which in particular leads to the uniqueness of very weak solution to the problems with measure right hand side -- Theorem~\ref{T3} for the nonlinear case and Theorem~\ref{thm:ell} for the linear setting.
\item[3)] A new tool in harmonic analysis, the Lipschitz approximation method in weighted spaces, is developed -- Theorem~\ref{thm:liptrunc}.
\item[4)] A new tool for identification of a weak limit of the nonlinear operator, the biting weighted div--curl lemma, is invented -- Theorem~\ref{T5}. Such tool has a potential  to improve  the known methods in compensated compactness theory in significant manner.
\end{itemize}
To summarize, this paper proposes a new way how to attack more general elliptic problems than those discussed in Section~\ref{S:Results}. Indeed, it seems that the only missing point in the analysis of more general problems, e.g., the $p$-Laplace equation, is the \emph{formal} a~priori estimates beyond the duality pairing. Once such a~priori estimates are available one can follow the method introduced in this paper and gain an existence and uniqueness theory for general problems beyond the natural duality.

\bigskip

The structure of the remaining part of the paper is the following. In Section~\ref{S2}, we specify auxiliary tools used in the proofs of the results, Sections~\ref{S3}--\ref{Last:S} are devoted to the proofs of the main theorems of the paper and Section~\ref{P:C} is dedicated to the proofs of the corollaries.

\section{Auxiliary tools}
\label{S2}


\subsection{Muckenhoupt weights and the maximal function}
We start this part by recalling the definition of  the Hardy Littlewood maximal function. For any $f\in L^1_{\loc}(\setR^n)$ we define
\[
Mf(x):=\sup_{R>0}\dashint_{B_R(x)}\abs{f(y)}\dy \quad \textrm{with} \quad \dashint_{B_R(x)}\abs{f(y)}\dy:=\frac{1}{|B_R(x)|}\int_{B_R(x)}\abs{f(y)}\dy,
\]
where $B_R(x)$ denotes a ball with radius $R$ centered at $x\in \setR^n$. We shall use  similar notation for the vector- or tensor-valued function as well. Note here, that we could replace balls in the definition of the maximal function by cubes with sides parallel to the axis without any change. We will also use in what follows the standard notion for Lebesgue and Sobolev spaces. Further,  we say that $\omega:\setR^n \to \setR$ is a weight if it is a measurable function that is almost everywhere finite and positive. For such a weight and arbitrary measurable $\Omega\subset \setR^n$ we denote the space $L^p_{\omega}(\Omega)$ with $p\in [1,\infty)$ as
$$
L^p_{\omega}(\Omega):=\biggset{u:\Omega \to \setR^n; \; \norm{f}_{L^p_\omega}
:= \bigg(\int_{\Omega} |u(x)|^p\omega(x)\dx\bigg)^{\frac 1p} <\infty}.
$$
Note that our weights are defined on the whole space~$\setR^n$. Next, for  $p\in [1,\infty)$, we say that a weight $\omega$
belongs to the Muckenhoupt class $\Acal_p$ if and only if
there exists a positive constant $A$ such that for every balls $B
\subset \setR^n$ the following holds
\begin{alignat}{2}
  \label{defAp2}
  \left(\dashint_B
    \omega\dx\right)\left(\dashint_B\omega^{-(p'-1)}\dx\right)^\frac{1}{p'-1}
  &\le A & \qquad\qquad&\text{if $p \in (1,\infty)$},
  \\
  \label{defA1}
  M\omega(x)&\le A\, \omega(x) &&\text{if $p=1$}.
\end{alignat}
In what follows, we denote by
$A_p(\omega)$ the smallest constant $A$ for which the
inequality~\eqref{defAp2}, resp.~\eqref{defA1}, holds. Due to the celebrated result of Muckenhoupt, see \cite{Mu72}, we know
that $\omega \in \mathcal{A}_p$ is for $1<p<\infty$ equivalent to the
existence of a constant $A'$, such that  for all $f\in L^p(\setR^n)$
\begin{equation}\label{defAp}
\int\abs{Mf}^p\omega\dx\leq A'\,\int\abs{f}^p\omega\dx.
\end{equation}
Further, if $p \in [1,\infty)$ and $\omega \in \mathcal{A}_p$, then we have an embedding  $L^p_\omega (\Omega)
\embedding L^1_{\loc}(\Omega)$, since for all balls~$B \subset \setR^n$ there holds
\begin{align*}
  \dashint_B \abs{f}\dx &\leq \bigg(\dashint_B \abs{f}^p
  \omega\dx\bigg)^{\frac 1p} \bigg(\dashint_B
  \omega^{-(p'-1)}\dx\bigg)^{\frac 1{p'}} \leq \big(
  \mathcal{A}_p(\omega)\big)^{\frac 1p} \bigg( \frac{1}{\omega(B)}
  \int_B \abs{f}^p \omega\dx\bigg)^{\frac 1p}.
\end{align*}
In particular, the distributional derivatives of all $f \in
L^p_\omega$ are well defined. Next, we summarize some properties of Muckenhoupt weights in the following lemma.
\begin{lemma}[Lemma 1.2.12 in \cite{Tur00}]
  \label{cor:leftopen}
  Let $\omega\in \Acal_p$ for some $p\in [1,\infty)$. Then $\omega\in
  \Acal_q$ for all $q\ge p$. Moreover, there
  exists $s=s(p,A_p(\omega))>1$ such that $\omega \in
  L^s_{\loc}(\setR^n)$ and we have the reverse H\"{o}lder inequality,
  i.e.,
  \begin{equation}
    \left(\dashint_B \omega^s \dx\right)^{\frac{1}{s}}\le
    C(n,A_p(\omega))\dashint_B \omega \dx. \label{reversom}
  \end{equation}
  Further, if $p \in (1,\infty)$, then there exists
 $\sigma=\sigma(p,A_p(\omega)) \in [1,p)$ such that
  $\omega\in \mathcal{A}_\sigma$. In addition,   $\omega \in \mathcal{A}_p$ is equivalent to $\omega^{-(p'-1)} \in
  \mathcal{A}_{p'}$.
\end{lemma}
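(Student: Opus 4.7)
The plan is to reduce the four assertions to standard arguments in the theory of Muckenhoupt weights; three of the four are short, while the reverse Hölder inequality is the technical heart and the rest follow from it.

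First I would handle the monotonicity $\Acal_p\subset\Acal_q$ for $q\ge p$. Writing the defining quantity in terms of $\omega^{-1/(p-1)}$ and applying Jensen's inequality to the concave map $t\mapsto t^{(p-1)/(q-1)}$ gives
\[
\Bigl(\dashint_B \omega^{-1/(q-1)}\dx\Bigr)^{q-1}\le \Bigl(\dashint_B \omega^{-1/(p-1)}\dx\Bigr)^{p-1},
\]
so $A_q(\omega)\le A_p(\omega)$. The duality statement $\omega\in\Acal_p\iff \sigma:=\omega^{-(p'-1)}\in\Acal_{p'}$ is then a direct algebraic manipulation: since $(p-1)(p'-1)=1$ we have $\sigma^{-(p-1)}=\omega$, so writing the $\Acal_{p'}$ condition for $\sigma$ and raising to the power $p-1$ reproduces the $\Acal_p$ condition for $\omega$, with an explicit relation between the two constants.

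Next I would prove the reverse Hölder inequality, which is the main obstacle. The plan is to first derive the $\Acal_\infty$-dichotomy: there exist $\delta,\tau\in(0,1)$ depending only on $n$ and $A_p(\omega)$ so that for every ball $B$ and every measurable $E\subset B$,
\[
\frac{|E|}{|B|}<\delta \quad\Longrightarrow\quad \frac{\omega(E)}{\omega(B)}<\tau.
\]
This follows from the $\Acal_p$ condition by Hölder applied to $|E|=\int_E \omega^{1/p}\omega^{-1/p}\dx$. With this dichotomy in hand, a Calderón--Zygmund decomposition at geometrically increasing levels $\lambda_k=c^k \dashint_B \omega$ produces level sets $\{\omega>\lambda_k\}\cap B$ whose $\omega$-mass decays geometrically, and summing by layer cake yields
\[
\Bigl(\dashint_B \omega^s\dx\Bigr)^{1/s}\le C\,\dashint_B \omega\dx
\]
for some $s=1+\epsilon>1$ with $\epsilon$ depending only on $\delta,\tau$, hence only on $n$ and $A_p(\omega)$. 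Local $L^s$-integrability of $\omega$ is then immediate.

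Finally, left openness follows from the reverse Hölder inequality applied to the dual weight. By the duality step, $\sigma=\omega^{-1/(p-1)}\in\Acal_{p'}$, and by the reverse Hölder inequality just proved, $\sigma\in L^{s}_{\loc}$ with a reverse Hölder constant depending only on $p$ and $A_p(\omega)$. Setting $\sigma_0:=(p-1)(1-1/s)+1$ and using Jensen's inequality together with the reverse Hölder bound shows that the $\Acal_{\sigma}$ quantity for $\omega$ remains finite for $\sigma=p-\eta$ with some $\eta>0$ small, giving $\omega\in\Acal_{p-\eta}\subset\Acal_\sigma$ for a value $\sigma\in[1,p)$. The hard step is the Calderón--Zygmund iteration behind the reverse Hölder inequality; the other three assertions are essentially bookkeeping around it, and this is precisely the route taken in Turesson's monograph, which we would cite for the full details.
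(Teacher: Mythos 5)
The paper does not prove this lemma at all: it is quoted verbatim from Turesson's monograph (Lemma~1.2.12 in \cite{Tur00}), so there is no internal proof to compare against. Your sketch is the standard textbook argument (Turesson, Stein, Garc\'ia-Cuerva--Rubio de Francia) and is correct in outline: Jensen for the inclusion $\Acal_p\subset\Acal_q$, the algebraic identity $(p-1)(p'-1)=1$ for the duality, the $\Acal_\infty$-dichotomy plus a Calder\'on--Zygmund/Gehring iteration for the reverse H\"older inequality, and reverse H\"older applied to the dual weight for left openness. Three small points to tighten. First, the Jensen argument as written assumes $p>1$; for $p=1$ one instead uses $\dashint_B\omega\le A_1(\omega)\operatorname{ess\,inf}_B\omega$ together with $\bigl(\dashint_B\omega^{-1/(q-1)}\bigr)^{q-1}\le(\operatorname{ess\,inf}_B\omega)^{-1}$. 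Second, the H\"older estimate $\bigl(|E|/|B|\bigr)^p\le A_p(\omega)\,\omega(E)/\omega(B)$ goes in the opposite direction to the dichotomy you state; you obtain the stated implication by applying it to the complement $B\setminus E$. Third, the exponent in the left-openness step is off: with $\sigma=\omega^{-1/(p-1)}$ satisfying a reverse H\"older inequality with exponent $s$, one needs $1/(\sigma_0-1)=s/(p-1)$, i.e.\ $\sigma_0=1+(p-1)/s\in(1,p)$, not $\sigma_0=(p-1)(1-1/s)+1$; the method and the conclusion are unaffected, but the formula as written does not match the computation it is meant to summarize.
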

In the paper, we also use the following improved embedding $L^p_\omega (\Omega)
\embedding L^q_{\loc}(\Omega)$ valid for all $\omega \in \mathcal{A}_p$ with $p \in
(1,\infty)$ and some $q\in [1,p)$ depending only on $A_p(\omega)$. Such an embedding can be deduced by a direct application of Lemma~\ref{cor:leftopen}. Indeed, since $\omega \in \mathcal{A}_p$, we have $\omega^{-(p'-1)} \in
\mathcal{A}_{p'}$. Thus, using Lemma~\ref{cor:leftopen}, there exists $s=s(A_p(\omega))>1$ such that
\begin{align*}
  \left(\dashint_B \omega^{-s(p'-1)} \dx\right)^{\frac{1}{s}}\le
  C(A_p(\omega))\dashint_B \omega^{-(p'-1)} \dx.
\end{align*}
Consequently, for $q:= \frac{sp}{p+s-1} \in (1,p)$ we can use the H\"{o}lder inequality to deduce that
\begin{align}
  \label{eq:lqprop}
  \begin{aligned}
    \bigg( \dashint_B \abs{f}^q\dx\bigg)^{\frac 1q} &\leq
    \bigg(\dashint_B \abs{f}^p \omega\dx\bigg)^{\frac 1p}
    \bigg(\dashint_B \omega^{-s(p'-1)}\dx\bigg)^{\frac 1{s p'}}\\
&\leq C(A_p(\omega)) \bigg( \frac{1}{\omega(B)} \int_B
    \abs{f}^p \omega\dx\bigg)^{\frac 1p},
  \end{aligned}
\end{align}
which implies the desired embedding.

The next result makes another link between the maximal function and  $\Acal_p$-weight.
\begin{lemma}[See pages 229--230 in \cite{86:_real} and page 5 in \cite{Tur00}]\label{cor:dual}
Let $f \in L^1_{\loc}(\setR^n)$ be such that $Mf<\infty$ almost everywhere in $\setR^n$. Then for all $\alpha \in (0,1)$ we have $(Mf)^{\alpha} \in \Acal_1$. Furthermore, for all $p\in (1,\infty)$ and all $\alpha\in (0,1)$ there holds
$(Mf)^{-\alpha(p-1)}\in \Acal_p$.
\end{lemma}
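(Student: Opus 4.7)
The plan is to follow the classical argument of Coifman and Rochberg: the first assertion is the substantive part, and the second follows from the first by a direct manipulation of the $\mathcal{A}_p$ condition. For the first statement, it suffices to produce, for some constant $C$ depending only on $n$ and $\alpha$, the averaged inequality $\dashint_B (Mf)^\alpha \dx \le C\,\operatorname*{ess\,inf}_B (Mf)^\alpha$ for every ball $B\subset \setR^n$; this is equivalent to $M((Mf)^\alpha)\le C(Mf)^\alpha$ a.e., i.e.\ to the $\mathcal{A}_1$ condition.

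Fix a ball $B$ and split $f = f_1 + f_2$ with $f_1 := f\chi_{2B}$ and $f_2 := f\chi_{\setR^n\setminus 2B}$. Since $\alpha \in (0,1)$, subadditivity of $M$ together with the elementary inequality $(a+b)^\alpha\le a^\alpha+b^\alpha$ gives $(Mf)^\alpha \le (Mf_1)^\alpha + (Mf_2)^\alpha$. For the first piece I will invoke Kolmogorov's inequality, a consequence of the weak-$(1,1)$ bound for $M$ obtained by integrating its distribution function only up to a well-chosen height: for any measurable $E\subset\setR^n$ of finite measure,
\[
\int_E (Mf_1)^\alpha \dx \le C\,\abs{E}^{1-\alpha}\,\norm{f_1}_{L^1}^{\alpha}.
\]
Taking $E=B$, dividing by $\abs{B}$ and using $|2B|\simeq|B|$ yields
\[
\dashint_B (Mf_1)^\alpha \dx \le C\,\Big(\dashint_{2B} \abs{f}\,\dy\Big)^\alpha \le C\,(Mf(y))^\alpha \quad\text{for every } y\in B,
\]
because $2B$ is admissible in the definition of $Mf(y)$. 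For the second piece, a standard geometric comparison shows that every ball $B'$ containing some $x\in B$ which meets $\setR^n\setminus 2B$ is contained in a dilated ball $c_n B''$ centred at an arbitrary $y\in B$ of comparable radius; hence $Mf_2(x) \le C_n\,Mf(y)$ for a.e.\ $x,y\in B$, and therefore $\dashint_B (Mf_2)^\alpha \dx \le C\,\operatorname*{ess\,inf}_B (Mf)^\alpha$. Adding the two bounds yields the $\mathcal{A}_1$ condition.

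For the second assertion, set $\omega := (Mf)^{-\alpha(p-1)}$. Then $\omega^{-1/(p-1)} = (Mf)^\alpha$, so that the $\mathcal{A}_p$ characteristic reads
\[
\Big(\dashint_B (Mf)^{-\alpha(p-1)}\dx\Big) \Big(\dashint_B (Mf)^\alpha\dx\Big)^{p-1}.
\]
Since the exponent $-\alpha(p-1)$ is negative, the first factor is bounded by $(\operatorname*{ess\,inf}_B Mf)^{-\alpha(p-1)}$; applying the $\mathcal{A}_1$ property of $(Mf)^\alpha$ established above, the second factor is bounded by $A_1((Mf)^\alpha)^{p-1}(\operatorname*{ess\,inf}_B Mf)^{\alpha(p-1)}$. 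The infimum factors cancel, producing a bound independent of $B$, which is the desired $\mathcal{A}_p$ estimate.

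The main obstacle is the careful application of Kolmogorov's inequality together with the geometric comparison that renders $Mf_2$ essentially constant on $B$; both rely on the hypothesis $Mf<\infty$ a.e.\ to ensure all quantities are finite, and both depend decisively on $\alpha<1$, which is exactly what makes the weak-$(1,1)$ distribution function integrable in the relevant range.
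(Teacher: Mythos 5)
Your proof is correct: the paper itself gives no argument for this lemma (it is quoted from Stein and Turesson), and what you write is precisely the classical Coifman--Rochberg proof contained in those references -- the $f=f\chi_{2B}+f\chi_{\setR^n\setminus 2B}$ splitting, Kolmogorov's inequality for the local part, the geometric comparison for the far part, and the standard deduction $w\in\Acal_1\Rightarrow w^{-(p-1)}\in\Acal_p$. The only cosmetic point is that, since the paper's $M$ is the centered maximal operator, $2B$ is not literally admissible in the definition of $Mf(y)$ for $y\in B$; one passes to the concentric ball $B_{3r}(y)\supset 2B$ at the cost of a dimensional constant, which changes nothing.
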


We would also like  to point out that the maximum $\omega_1 \vee \omega_2$
and minimum $\omega_1 \wedge \omega_2$ of two $\Acal_p$-weights are again
$\Acal_p$-weights. For $p=2$ we even have $A_2(\omega_1 \wedge
\omega_2) \leq A(\omega_1) +A_2(\omega_2)$, which follows from the following simple computation
\begin{align}
  \label{eq:A2min}
  \begin{aligned}
    \dashint_B \omega_1 \wedge \omega_2\dx \dashint_B \frac1{\omega_1
      \wedge \omega_2}\dx &\leq \left[\bigg(\dashint_B \omega_1\dx\bigg)
    \wedge \bigg( \dashint_B\omega_2\dx \bigg)\right] \dashint_B
    \frac{1}{\omega_1} + \frac{1}{\omega_2}\dx
    \\
    &\leq A_2(\omega_1) + A_2(\omega_2).
  \end{aligned}
\end{align}

\subsection{Convergence tools}
The results recalled in the previous sections shall give us a direct way for a~priori estimates for an approximative problem \eqref{eq:sysA}. However, to identify the limit correctly, we use Theorem~\ref{T5}, which is based on the following  biting lemma.
\begin{lemma}[Chacon's Biting Lemma, see \cite{BallMurat:89}]\label{thm:blem}
   Let $\Omega$ be a bounded domain in $\setR^n$ and let
   $\{v^n\}_{n=1}^{\infty}$ be a bounded sequence in $L^1(\Omega)$. Then
   there exists a non-decreasing  sequence of measurable subsets
   $E_j\subset\Omega$ with $|\Omega \setminus E_j|\to 0$ as $j\to \infty$ such that $\{v^n\}_{n\in\mathbb{N}}$ is
   pre-compact in the weak topology of $L^1(E_j)$, for each $j\in\mathbb{N}$.
 \end{lemma}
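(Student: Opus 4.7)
The plan is to prove the result via weak-$*$ compactness of Radon measures combined with the Lebesgue decomposition and the Dunford--Pettis characterization of weak precompactness in $L^1$.

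First, I would reduce to the case $v^n \geq 0$ by replacing each $v^n$ with $\abs{v^n}$, since weak precompactness of $\{\abs{v^n}\}$ in $L^1(E_j)$ implies equi-integrability of $\{v^n\}$ and therefore weak precompactness. Next, I would regard the finite positive Radon measures $\mu_n := v^n\, dx$ as elements of the dual of $C(\overline{\Omega})$, whose total variations are uniformly bounded by $\sup_n \norm{v^n}_{L^1}$. By the Banach--Alaoglu theorem, I would extract a subsequence (not relabelled) such that $\mu_n \rightharpoonup^* \mu$ for some finite positive Radon measure $\mu$ on $\overline{\Omega}$.

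Next I would invoke the Lebesgue decomposition to write $\mu|_{\Omega} = f\,\mathcal{L}^n + \mu_s$, with $f \in L^1(\Omega)$ and $\mu_s \perp \mathcal{L}^n$. Fix a Borel set $N \subset \Omega$ with $\abs{N}=0$ and $\mu_s(\Omega \setminus N)=0$. By outer regularity of Lebesgue measure, for every $j \in \mathbb{N}$ I pick an open set $O_j \supset N$ with $\abs{O_j} < 1/j$, and (replacing $O_j$ by $\bigcap_{k\leq j} O_k$) arrange the $O_j$ to be non-increasing. Setting $E_j := \Omega \setminus O_j$ gives a non-decreasing sequence of measurable subsets with $\abs{\Omega \setminus E_j} < 1/j \to 0$ and, crucially, $\mu_s(E_j)=0$, so that $\mu|_{E_j} = f\chi_{E_j}\,\mathcal{L}^n$ is absolutely continuous.

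Finally, I would verify the hypotheses of the Dunford--Pettis theorem for $\{v^n|_{E_j}\}$: the $L^1(E_j)$ norms are uniformly bounded by assumption, so the only nontrivial point is equi-integrability on $E_j$. I would argue by contradiction: if it fails, then along a further subsequence $n_k$ there are measurable sets $B_k \subset E_j$ with $\abs{B_k} \to 0$ and $\int_{B_k} v^{n_k}\,dx \geq \varepsilon_0 > 0$. Choosing $\abs{B_k} < 2^{-k}$ and enlarging $B_k$ to open sets $U_k \subset \Omega$ with $\abs{U_k} < 2^{1-k}$, the sets $U_k^\star := \bigcup_{l\geq k} U_l$ are open with $\abs{U_k^\star} \to 0$, so by dominated convergence $\int_{U_k^\star} f\,dx \to 0$. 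On the other hand, lower semicontinuity of weak-$*$ convergence on the open set $U_k^\star$ gives
\[
\varepsilon_0 \;\leq\; \liminf_{l \to \infty}\mu_{n_l}(U_l) \;\leq\; \liminf_{l\to\infty}\mu_{n_l}(U_k^\star) \;\leq\; \mu(U_k^\star) \;=\; \int_{U_k^\star} f\,dx \;+\; \mu_s(U_k^\star).
\]
Since $U_k^\star \subset E_j \subset \Omega \setminus N$ we have $\mu_s(U_k^\star)=0$, and the right-hand side tends to $0$ as $k \to \infty$, producing the desired contradiction. Thus $\{v^n|_{E_j}\}$ is equi-integrable on each $E_j$, and the Dunford--Pettis theorem yields weak precompactness in $L^1(E_j)$.

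The main obstacle is the final step, namely promoting the qualitative weak-$*$ convergence together with the absolute continuity of $\mu$ on $E_j$ into \emph{uniform} absolute continuity (i.e.\ equi-integrability) of the sequence; the construction of the nested open sets $U_k^\star$ housing the would-be concentration masses, combined with $\mu_s(E_j)=0$, is what makes this step work.
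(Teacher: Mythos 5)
The paper does not actually prove this lemma---it quotes it from Ball--Murat---so your proposal has to stand on its own, and it contains a genuine error at the decisive step. For a weak-$*$ convergent sequence of positive measures $\mu_{n_l}\rightharpoonup^*\mu$, the portmanteau inequalities read $\mu(U)\le\liminf_l\mu_{n_l}(U)$ for \emph{open} $U$ and $\limsup_l\mu_{n_l}(K)\le\mu(K)$ for \emph{compact} $K$. Your chain invokes ``lower semicontinuity'' to get $\liminf_{l}\mu_{n_l}(U_k^\star)\le\mu(U_k^\star)$ for the open set $U_k^\star$, which is exactly the wrong direction; and passing to the closure does not repair it, because $\overline{U_k^\star}$ can be vastly larger than $U_k^\star$ (it may even be all of $\overline{\Omega}$) and need not avoid $N$. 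A smaller gap of the same flavour: the open enlargements $U_k\supset B_k$ need not stay inside $E_j$, so the claim $\mu_s(U_k^\star)=0$ is also unjustified.

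The failure is not merely technical: the strategy of reading off the exceptional sets $E_j$ from the singular part of the weak-$*$ limit cannot work. Take $\Omega=(0,1)$ and $v^n=n\chi_{A_n}$ with $A_n=\bigcup_{i=0}^{n-1}\bigl(i/n,\,i/n+n^{-2}\bigr)$; then $\norm{v^n}_{L^1}=1$ and $\mu_n\rightharpoonup^*\mathcal{L}^1$, whose singular part vanishes, so your recipe yields $E_j=\Omega$. Yet $\abs{A_n}=1/n\to0$ while $\int_{A_n}v^n\dx=1$, so no subsequence of $\{v^n\}$ is equi-integrable on $\Omega$ (nor on any subset with nonempty interior, by the very open-set inequality above). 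The exceptional sets must therefore be built from where the functions $v^n$ themselves concentrate---for instance by extracting a subsequence and deleting the superlevel sets $\{v^{n_k}>t_k\}$, whose measures are controlled by Chebyshev, or via the truncation-plus-diagonal argument of Ball--Murat---rather than from the limit measure. (The same example shows that precompactness can in general only be asserted for a subsequence, which is indeed how the lemma is used later in the paper.)
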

Note here, that pre-compactness of $v^n$ is equivalent to the following:  for every $j\in \mathbb{N}$ and every $\varepsilon>0$ there exists a $\delta>0$ such that for all $A\subset E_j$ with $\abs{A}\leq \delta$ and all $n\in \mathbb{N}$ the following holds
\begin{equation}\label{smallunif}
\int_A\abs{v^n}\dx\leq \epsilon.
\end{equation}

\subsection{$L^q$-theory for linear systems with continuous coefficients}
The starting point for getting all a~priori estimates in the paper is the following.
\begin{lemma}[See Lemma~2 in \cite{DolM95}]
  \label{lem:CZ}
  Let $\Omega$ be a $\mathcal{C}^1$-domain and $\bfB \in
  \mathcal{C}(\overline{\Omega}, \setR^{n\times N \times n\times N})$ be a
  continuous, elliptic tensor that satisfies for all $\eta\in
  \setR^{n\times N}$ and all $x\in \overline{\Omega}$
  \begin{equation}
    c_1|\eta|^2\le \bfB(x)\eta \cdot \eta \le c_2|\eta|^2
  \end{equation}
  for some $c_1,c_2>0$.  Then for any $f\in L^q(\Omega; \setR^{n\times
    N})$ with $q\in (1,\infty)$ there exists unique $w\in
  W^{1,q}_0(\Omega; \setR^N)$ solving
  \begin{align*}
    -\divergence (\bfB\nabla w)&=-\divergence f \qquad \text{ in } \Omega
  \end{align*}
  in the sense of distribution.
  Moreover, there exists a constant $C$ depending only on $\bfB$, $q$
  and the shape of $\Omega$ such that
  \begin{equation}\label{aoprioriconst}
    \norm{\nabla w}_{L^{q}(\Omega)}\leq C(\bfB,q,\Omega)\norm{f}_{L^{q}(\Omega)}.
  \end{equation}
\end{lemma}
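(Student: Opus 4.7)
The plan is to prove the lemma in four stages: establish the baseline $L^2$-theory, promote it to an interior $L^q$-bound for $q\geq 2$ via freezing of coefficients, extend it up to the boundary by flattening, and finally reach $q\in(1,2)$ through duality.

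\textbf{Stage 1 ($L^2$ baseline).} I would first apply the Lax--Milgram lemma to the bilinear form $(u,v)\mapsto\int_\Omega \bfB\nabla u\cdot\nabla v\dx$ on $W^{1,2}_0(\Omega;\setR^N)$, which is continuous and coercive by the two-sided ellipticity of $\bfB$ and the Poincaré inequality. This gives a unique solution $w\in W^{1,2}_0$ with $\|\nabla w\|_{L^2}\lesssim \|f\|_{L^2}$, providing both the $q=2$ case and a canonical object to which higher and lower integrability will be bootstrapped.

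\textbf{Stage 2 (interior estimate for $q\geq 2$).} On a ball $B_r(x_0)\Subset\Omega$ I freeze the coefficients and rewrite the equation as $-\divergence(\bfB(x_0)\nabla w)=-\divergence\bigl(f+(\bfB(x_0)-\bfB(\cdot))\nabla w\bigr)$. The classical constant-coefficient Calder\'on--Zygmund estimate, obtained from the fundamental solution of the frozen operator and the $L^q$-boundedness of singular integrals of Riesz-transform type (equivalently, Mihlin's multiplier theorem), yields
\[
\|\nabla w\|_{L^q(B_r)} \leq C_q\bigl(\|f\|_{L^q(B_r)} + \omega_{\bfB}(r)\|\nabla w\|_{L^q(B_r)}\bigr),
\]
where $\omega_{\bfB}$ is the modulus of continuity of $\bfB$. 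Choosing $r$ so small that $C_q\omega_{\bfB}(r)\leq \tfrac12$ absorbs the last term into the left-hand side.

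\textbf{Stage 3 (boundary estimate and patching).} Because $\partial\Omega$ is $\mathcal{C}^1$, a $\mathcal{C}^1$-chart flattens a neighborhood of a boundary point to a portion of the half-space; the pulled-back coefficients remain continuous and uniformly elliptic, so the freezing argument of Stage~2 applies once one has the constant-coefficient $L^q$-bound on the half-space with Dirichlet data on the flat boundary. The latter is obtained by an odd reflection (after a linear change of variables diagonalizing $\bfB(x_0)$) together with the whole-space singular-integral theory. A finite cover of $\overline\Omega$ by interior balls and boundary charts, combined with a partition of unity and the $L^2$-bound of Stage~1 to control the lower-order commutator terms, then produces \eqref{aoprioriconst} for all $q\geq 2$.

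\textbf{Stage 4 (duality for $q<2$) and main obstacle.} For $q\in(1,2)$ I would argue by duality: given $\phi\in\mathcal{C}^\infty_c(\Omega;\setR^{n\times N})$, the adjoint problem $-\divergence(\bfB^T\nabla\psi)=-\divergence\phi$ has, by Stages~1--3 (note $q'>2$), a unique $\psi\in W^{1,q'}_0$ with $\|\nabla\psi\|_{L^{q'}}\lesssim\|\phi\|_{L^{q'}}$. Testing the equation for $w$ against $\psi$ gives $\int\nabla w\cdot\phi\dx = \int f\cdot\nabla\psi\dx$, and taking the supremum over $\phi$ with $\|\phi\|_{L^{q'}}\leq 1$ yields the $L^q$-bound. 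Existence for $q<2$ follows by approximating $f$ in $L^q$ by $L^2$-data and passing to the weak limit, and uniqueness in $W^{1,q}_0$ for every $q>1$ follows from the same duality pairing applied to the difference of two solutions. The main technical obstacle is the half-space constant-coefficient $L^q$-estimate for a genuinely $N$-vector valued elliptic system; once that is taken as a black box (either via the explicit reflection or via the Agmon--Douglis--Nirenberg theory), the continuity of $\bfB$ enters only qualitatively, through the choice of freezing radius, and everything else is standard perturbation and patching.
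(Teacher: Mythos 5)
The paper never proves this lemma: it is imported verbatim from Lemma~2 of the cited reference of Dolzmann and M\"uller, so there is no internal argument to compare yours against. Your outline is the classical route to that result (Lax--Milgram, freezing of coefficients plus constant-coefficient Calder\'on--Zygmund theory, boundary flattening with reflection, duality for $q<2$), and it is essentially the proof one finds in the literature. It is worth noting that when the authors later need a \emph{weighted} analogue of the local estimate (Lemma~\ref{thm:ell-local}), they take the present lemma as a black box and argue instead by a maximal-function/good-$\lambda$ comparison with constant-coefficient solutions, which sidesteps exactly the absorption issues discussed below.

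Two points in your Stage~2 would fail as literally written. First, the displayed inequality with $\norm{\nabla w}_{L^q(B_r)}$ on \emph{both} sides over the same ball is not obtainable: to invoke the constant-coefficient estimate you must localize with a cutoff, which produces lower-order terms and forces the right-hand side onto a strictly larger ball (adding a $\bfB(x_0)$-harmonic function to $w$ changes the left side but not $f$, so a same-ball estimate without extra terms is false). The correct local statement is of the form $\norm{\nabla w}_{L^q(B_{r/2})}\leq C\norm{f}_{L^q(B_r)}+C\,\omega_{\bfB}(r)\norm{\nabla w}_{L^q(B_r)}+C r^{-1}\norm{w-(w)_{B_r}}_{L^q(B_r)}$, and passing from this to a genuine bound requires the standard iteration lemma for set functions or a covering argument, not direct subtraction. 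Second, absorption presupposes that $\norm{\nabla w}_{L^q}$ is finite, which is precisely what is being proved; this is repaired either by first establishing the estimate for smooth coefficients and data and passing to the limit, or by raising the exponent from $2$ to $q$ in finitely many self-improving steps. With these standard repairs, and taking the half-space constant-coefficient system estimate from Agmon--Douglis--Nirenberg as you propose, the remaining stages (duality for $q<2$, approximation for existence, and the adjoint-pairing uniqueness argument) are correct as stated.
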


\section{Proof of Theorems~\ref{T1} and \ref{T3}}
\label{S3}
%
%
%
First, it is evident that Theorem~\ref{T1} directly follows from Theorem~\ref{T3} by setting $\omega \equiv 1$, which is surely an $\mathcal{A}_p$-weight. Therefore, we focus on the proof of Theorem~\ref{T3}. We split the proof into several steps. We start with the uniform estimates, which heavily relies on Theorem~\ref{thm:ell}, then provide the existence proof, for which we use the result of Theorem~\ref{T5}, and finally show the uniqueness of the solution, again based on Theorem~\ref{thm:ell}.
\subsection{Uniform estimates}
We start the proof by showing the uniform estimate \eqref{apriori3} for arbitrary $u\in W^{1,q}_0(\Omega; \mathbb{R}^N)$ with $q>1$ solving \eqref{WFG}. Without loss of generality, we can restrict ourselves to the case $q\in (1,2)$. First, we consider the case when $f\in L^{2}_{\omega}(\Omega;\mathbb{R}^{n\times N})$ with some weight $\omega \in \mathcal{A}_2$. Then for fix $j \in \mathbb{N}$ we define an auxiliary weight $\omega_j:=\omega \wedge j(1+M|\nabla u|)^{q-2}$. Then it follows from Lemma~\ref{cor:dual} and the fact that $q\in (1,2)$ that $w_j\in \mathcal{A}_2$. Moreover, we have
$$
A_2(\omega_j)\le A_2(\omega) + A_2(j(1+M|\nabla u|)^{q-2})= A_2(\omega) + A_2((1+M|\nabla u|)^{q-2})\le C(u,\omega)
$$
and also that $\nabla u, f \in L^2_{\omega_j}(\Omega; \setR^{n\times N})$. Next, using \eqref{WFG}, we see that for all  $\phi \in \mathcal{C}_0^{0,1}(\Omega; \mathbb{R}^N)$
\begin{align}
  \int_{\Omega} \tilde{A}(x)\nabla u\cdot \nabla \phi\dx =
  \int_{\Omega}(f -A(x,\nabla u)+\tilde{A}(x)\nabla u)\cdot
  \nabla \phi\dx.\label{startqw}
\end{align}
Since the right hand side belongs to
$L^2_{\omega_j}(\Omega; \mathbb{R}^{n\times N})$, we can use Theorem~\ref{thm:ell} and the assumptions \eqref{growth} and \eqref{growth2} to get the
estimate
\begin{align*}
  \int_\Omega \abs{\nabla u}^2\omega_j \dx &\leq
  C(\tilde{A},A_2(\omega_j),\Omega,c_1,c_2)\int_\Omega\abs{f-A(x,\nabla
    u)+\tilde{A}(x)\nabla u}^2\omega_j \dx
  \\
  &\leq C(\tilde{A},u,\omega,\Omega,c_1,c_2)\left(\int_\Omega\abs{f}^2
    \omega_j \dx +\int_{\Omega}\abs{A(x,\nabla u)-\tilde{A}(x)\nabla
      u}^2\omega_j \dx \right)
  \\
  &\leq C(\tilde{A},u,\omega,\Omega,c_1,c_2)\int_\Omega(\abs{f}^2+k^2)\, \omega_j
  \dx
  \\
  &\qquad +C(\tilde{A},u,\omega,\Omega,c_1,c_2) \int_{\{|\nabla u|\ge
    k\}}\frac{\abs{A(x,\nabla u)-\tilde{A}(x)\nabla u}^2}{|\nabla
    u|^2} |\nabla u|^2\omega_j \dx.
\end{align*}
Finally, we set
$$
\varepsilon^2:=\frac{1}{2C(\tilde{A},u,\omega,\Omega,c_1,c_2)}
$$
and according to \eqref{ass:A} we can find $k$ such that
$$
\frac{\abs{A(x,\nabla u)-\tilde{A}(x)\nabla u}^2}{|\nabla u|^2}\le \frac{1}{2C(\tilde{A},u,\omega,\Omega,c_1,c_2)},
$$
provided that $|\nabla u|\ge k$. Inserting this inequality above, we deduce that
\begin{align*}
  \int_\Omega \abs{\nabla u}^2\omega_j \dx &\leq
  C(\tilde{A},u,\omega,\Omega,c_1,c_2)\int_\Omega(\abs{f}^2+k^2) \omega_j \dx
  +\frac12\int_{\Omega}|\nabla u|^2\omega_j \dx.
\end{align*}
Since we already know that $\nabla u \in L^2_{\omega_j}(\Omega;\setR^{n\times N})$ and $k$ is fixed independently of $j$,  we
can absorb the last term into the left hand side to get
\begin{align*}
  \int_\Omega \abs{\nabla u}^2\omega_j \dx &\leq
  C(\tilde{A},u,\omega,\Omega,c_1,c_2)\int_\Omega(\abs{f}^2+1) \omega_j \dx.
\end{align*}
Next, we let $j\to \infty$ in the above inequality. For the right hand side, we use the fact that $\omega_j \leq \omega$ and for the left hand side we use the
monotone convergence theorem (notice here that $\omega_j\nearrow \omega$ since $M|\nabla u| < \infty$ almost everywhere) for the left hand side to obtain
\begin{align*}
  \int_\Omega \abs{\nabla u}^2\omega \dx &\leq
  C(\tilde{A},u,\omega,\Omega,c_1,c_2)\left(1+\int_\Omega\abs{f}^2\omega
    \dx\right).
\end{align*}
Although this estimate is not uniform  yet, since the right hand side still depends on the $A_2$ constant of $(1+M|\nabla u|)^{q-2}$, it  implies that $\nabla u\in L^2_{\omega}(\Omega;\setR^{n\times N})$ for the original weight $\omega$. Therefore, we can reiterate this procedure, i.e., going back to \eqref{startqw} and applying Theorem~\ref{thm:ell}, we find that
\begin{align*}
  \int_\Omega \abs{\nabla u}^2\omega \dx &\leq
  C(\tilde{A},A_2(\omega),\Omega,c_1,c_2)\int_\Omega\abs{f-A(x,\nabla
    u)+\tilde{A}(x)\nabla u}^2\omega \dx
  \\
  &\leq C(\tilde{A},A_2(\omega),\Omega,c_1,c_2)\int_\Omega(\abs{f}^2+k)\, \omega
  \dx
  \\
  &\qquad +C(\tilde{A},A_2(\omega),\Omega,c_1,c_2) \int_{\{|\nabla u|\ge
    k\}}\frac{\abs{A(x,\nabla u)-\tilde{A}(x)\nabla u}^2}{|\nabla
    u|^2} |\nabla u|^2\omega \dx.
\end{align*}
Since we already know that  $\nabla u  \in L^2_{\omega}(\Omega;\setR^{n\times N})$, we
can use the same procedure as above and absorb the last term into the left hand side to get
\begin{align}
  \label{eq:final6}
  \int_\Omega \abs{\nabla u}^2\omega \dx &\leq
  C(c_1,c_2,A_2(\omega),\Omega,\tilde{A})\left(1+\int_\Omega\abs{f}^2\omega
    \dx\right).
\end{align}
We would like to emphasize  that the constant $C$ in~\eqref{eq:final6}  depends on~$\omega$ only through its $A_2$-constant. Therefore, by the {\em miracle of extrapolation} \cite[Theorem 3.1]{CruMP06} (see also~\cite{Rub84})
applied to the couples $(\nabla u, f)$ we can extend this estimate
valid for all $\mathcal{A}_2$-weights to all~$\mathcal{A}_p$-weights. In particular, we find that
\begin{align*}
  \int_\Omega \abs{\nabla u}^p\omega \dx &\leq
  C(c_1,c_2,A_p(\omega),\Omega,\tilde{A})\left(1+\int_\Omega\abs{f}^p\omega
    \dx\right) \qquad \text{for all $1<p<\infty$ and $\omega \in
    \mathcal{A}_p$},
\end{align*}
which is just~\eqref{apriori3} from our claim.


\subsection{Existence of a solution}
Let $f\in L^p_{\omega}(\Omega;\mathbb{R}^{n\times N})$ with some $p\in (1,\infty)$ and $\omega \in \mathcal{A}_p$ be arbitrary. Then according to~\eqref{eq:lqprop} there exists some $q_0 \in (1,2)$ such that
$L^{p}_{\omega}(\Omega) \embedding L^{q_0}(\Omega)$. Therefore, defining $\omega_0 := (1+ Mf)^{q_0-2}$, we can use Lemma~\ref{cor:dual} to obtain that $\omega_0 \in \mathcal{A}_2$ and it is evident that  $f \in L^2_{\omega_0}(\Omega; \setR^{n\times N})$.

The construction of the solution is based on a proper approximation of
the right hand side $f$ and a limiting procedure. We first extend $f$
outside of $\Omega$ by zero and define $f^k:=f \chi_{\set{|f|<k}}$.
Then $f^k$ are bounded functions, $|f^k|\nearrow |f|$ and
\begin{align}\label{cfn}
  f^k \to f &&\textrm{strongly in } L^2_{\omega_0}\cap
  L^{q_0}(\setR^n; \setR^{n\times N}).
\end{align}
For such an approximative $f^k$ we can use the standard monotone operator theory to find a solution $u^k\in W^{1,2}_0(\Omega; \mathbb{R}^{N})$ fulfilling
\begin{equation}
  \int_{\Omega} A(x,\nabla u^k) \cdot \nabla \phi \dx = \int_{\Omega}f^k
  \cdot \nabla \phi \dx   \qquad \textrm{ for all } \phi\in W^{1,2}_0(\Omega; \mathbb{R}^N). \label{wfn}
\end{equation}
Hence, we can use the already proven estimate \eqref{apriori3} to deduce that
\begin{equation}
\begin{aligned}
  \label{finaln}
  \int_\Omega \abs{\nabla u^k}^2\omega_0 \dx &\leq
  C(c_1,c_2,A_2(\omega_0),\Omega,\tilde{A})\left(1+\int_\Omega\abs{f^k}^2\omega_0
    \dx\right)\\
    &\leq
  C(c_1,c_2,q_0,f,A_2(\omega_0),\tilde{A})\left(1+\int_\Omega\abs{f}^2\omega_0
    \dx\right)\\
    &\le C(c_1,c_2,\Omega,\tilde{A},f,\omega).
\end{aligned}
\end{equation}
Using the estimate~\eqref{finaln}, the reflexivity of the corresponding spaces, the embedding $L^2_{\omega_0}(\Omega) \embedding
L^{q_0}(\Omega)$ and the growth assumption~\eqref{growth},  we can pass to a subsequence (still denoted
by $u^k$) such that
\begin{align}
  \label{conn-a}
  u^k &\rightharpoonup u &&\textrm{weakly in } W^{1,q_0}_0(\Omega; \setR^N),
  \\
  \label{conn-b}
  \nabla u^k &\rightharpoonup \nabla u &&\textrm{weakly in }
  L^2_{\omega_0}\cap L^{q_0}(\Omega; \setR^{n\times N}),
  \\
  A(x,\nabla u^k) &\rightharpoonup \overline{A} &&\textrm{weakly in }
  L^2_{\omega_0}\cap L^{q_0}(\Omega; \setR^{n\times N})\label{con2}.
\end{align}
Next, using \eqref{finaln}--\eqref{conn-b}, the weak lower semicontinuity
and the unique identification of the limit~$u$ in $W^{1,1}(\Omega)$, we obtain
\begin{align}
  \label{eq:final}
  \int_\Omega \abs{\nabla u}^2\omega_0 \dx &\leq
  C(c_1,c_2,A_2(\omega_0),\Omega,\tilde{A})\left(1+\int_\Omega\abs{f}^2\omega_0
    \dx\right).
\end{align}

The last step is to show that~$u$ is a solution to our problem, i.e., that it satisfies \eqref{WFG}.  Using
\eqref{wfn}, \eqref{cfn} and \eqref{con2} it follows that
\begin{align}
  \label{eq:limit}
  \int_\Omega \Adash\cdot\nabla \phi \dx=\int_\Omega f\cdot\nabla \phi
  \dx\qquad\text{ for all }\phi\in \mathcal{C}^{0,1}_0(\Omega;\mathbb{R}^N).
\end{align}
Hence, to complete the existence part of the proof of Theorem~\ref{T3}, it remains to show that
\begin{align}
  \overline{A}(x)&=A(x,\nabla u(x)) \qquad\textrm{in  } \Omega.\label{show22}
\end{align}
To do so, we use\footnote{Although Theorem~\ref{T5} is formulated for
   vector-valued functions, it is an easy extension to use it also
  for  matrix-valued functions, which is the case here.}
Theorem~\ref{T5}. We denote $a^k:=\nabla u^k$ and $b^k:=A(x,\nabla
u^k)$. By using \eqref{finaln} and
\eqref{growth}, we find that \eqref{bit3} is satisfied with the weight $\omega_0$. Also the assumption \eqref{bit4} holds, which follows from  \eqref{cfn},
\eqref{wfn} and \eqref{eq:limit}. Finally, \eqref{bit5} is valid
trivially since $a^k$ is a gradient. Therefore, Theorem~\ref{T5} can
be applied, which implies the existence of a non-decreasing sequence of measurable
sets $E_j$, such that $|\Omega \setminus E_j|\to 0$ and
\begin{equation}\label{Minty}
  A(x,\nabla u^k) \cdot \nabla u^k \omega_0 \rightharpoonup \overline{A}
  \cdot  \nabla u\, \omega_0 \qquad \textrm{weakly in } L^1(E_j).
\end{equation}
For any $B\in L^{2}_{\omega_0}(\Omega; \setR^{n\times N})$, we have  that $B\, \omega_0$ and also $A(\cdot,B)\,
\omega_0$ belong to  $L^2_{1/\omega_0}(\Omega;\setR^{n\times N})$ and therefore using~\eqref{conn-b} and~\eqref{con2},  we can observe that
\begin{equation}\label{Minty2}
  (A(x,\nabla u^k)-A(x,B)) \cdot (\nabla u^k-B) \,\omega_0 \rightharpoonup
  (\overline{A}-A(x,B)) \cdot (\nabla u-B)\, \omega_0 \quad
  \textrm{weakly in }  L^1(E_j).
\end{equation}
Due to the monotonicity of $A$ we see that the term on the left hand side is non-negative and consequently, its weak limit is non-negative as well and we have that
\begin{equation}\label{Minty3}
  \int_{E_j}(\overline{A}-A(x,B)) \cdot (\nabla u-B) \,\omega_0 \dx \ge 0
  \qquad \textrm{for all } B\in L^2_{\omega_0}(\Omega;\setR^{n\times N})
  \textrm{ and all }  j\in \mathbb{N}.
\end{equation}
Therefore, it follows that
\begin{equation*}
\int_{\Omega}(\overline{A}-A(x,B)) \cdot (\nabla u-B)\, \omega_0 \dx \ge \int_{\Omega\setminus E_j}(\overline{A}-A(x,B)) \cdot (\nabla u-B)\, \omega_0 \dx
\end{equation*}
and letting $j \to \infty$ (note that the integral is well defined due
to~\eqref{conn-b} and~\eqref{con2}),  using the fact that $|\Omega
\setminus E_j|\to 0$ as $j\to \infty$ and the Lebesgue dominated convergence theorem, we obtain
\begin{equation*}
\int_{\Omega}(\overline{A}-A(x,B)) \cdot (\nabla u-B)\, \omega_0 \dx \ge 0 \qquad \textrm{for all } B\in L^2_{\omega_0}(\Omega;\setR^{n\times N}).
\end{equation*}
Hence, setting $B:=\nabla u -\varepsilon G$ where $G\in L^{\infty}(\Omega; \setR^{n\times N})$ is arbitrary and dividing by $\varepsilon$ we get
\begin{equation*}
  \int_{\Omega}(\overline{A}-A(x,\nabla u - \varepsilon G)) \cdot G\, \omega_0 \dx \ge 0 \qquad \textrm{for all } G\in L^{\infty}(\Omega;\setR^{n\times N}).
\end{equation*}
Finally, using the Lebesgue dominated convergence theorem, the assumption \eqref{growth} and the continuity of $A$ with respect to the second variable, we can let $\varepsilon \to 0_+$ to deduce
\begin{equation*}
\int_{\Omega}(\overline{A}-A(x,\nabla u)) \cdot G\, \omega_0 \dx \ge 0 \qquad \textrm{for all } G\in L^{\infty}(\Omega;\setR^{n\times N}).
\end{equation*}
Since $\omega_0$ is strictly positive almost everywhere in $\Omega$, the relation \eqref{show22} easily follows by setting e.g.,
$$
G:=-\frac{\overline{A}-A(x,\nabla u)}{1+\abs{\overline{A}-A(x,\nabla u)}}.
$$
Thus~\eqref{eq:limit} follows and $u$ is a very weak solution.

\subsection{Uniqueness} Let $u_1,u_2 \in W^{1,q}_0(\Omega;\setR^N)$ with $q>1$ be two very weak solutions to  \eqref{WFG} for some given $f\in L^p_{\omega}(\Omega;\setR^{n\times N})$, where $p\in (1,\infty)$ and $\omega \in \Acal_p$. Then it directly follows that
\begin{equation}
\label{un1}
\int_{\Omega} (A(x,\nabla u_1)-A(x,\nabla u_2))\cdot \nabla \varphi \dx = 0 \qquad \textrm{ for all } \varphi\in \mathcal{C}^{0,1}_0(\Omega;\setR^{n\times N}).
\end{equation}
First, consider the case that $f\in L^2(\Omega; \setR^{n\times N})$. Then using the result of the previous part we see that $u_1,u_2 \in W^{1,2}_0(\Omega;\setR^N)$ and therefore due to the growth assumption \eqref{growth}, we see that \eqref{un1} is valid for all $\varphi\in W^{1,2}_0(\Omega; \setR^N)$. Consequently, the choice $\varphi:=u_1-u_2$ is admissible and due to the strict monotonicity of $A$ we conclude that $\nabla u_1=\nabla u_2$ almost everywhere in $\Omega$ and due to the zero trace also that $u_1=u_2$.

Thus, it remains to discuss the case $f\notin L^2(\Omega;\setR^{n\times N})$. But since $f\in L^p_{\omega}(\Omega;\setR^{n\times N})$ with $p>1$ and $\omega$ being the $\mathcal{A}_p$-weight, we can deduce that $f\in L^{p_0}(\Omega;\setR^{n\times N})$ for some $p_0>1$, see \eqref{eq:lqprop}. Consequently, following Lemma~\ref{cor:dual}, we can define the $\mathcal{A}_2$-weight $\omega_0:=(1+Mf)^{p_0-2}$ and we get that $f\in L^2_{\omega_0}(\Omega;\setR^{n\times N})$. Therefore, using the maximal regularity result we can deduce that $\nabla u_i\in L^2_{\omega_0}(\Omega;\setR^{n\times N})$ for $i=1,2$. Hence, defining a new weight $w^n:=1\wedge (n\omega_0)$, which is bounded,  we also get that for each $n$ the solutions satisfy $\nabla u_i \in L^2_{\omega^n}(\Omega;\setR^{n\times N})$. Moreover, we have the estimate $A_2(\omega^n)\le A_2(1)+A_2(n\omega_0)=1+A_{2}(\omega_0)\le C(f)$. Hence, rewriting the identity \eqref{un1} into the form
\begin{equation}
\label{un2}
\begin{split}
&\int_{\Omega}\tilde{A}(x)(\nabla u_1-\nabla u_2) \cdot \nabla \varphi \dx \\
&\quad = \int_{\Omega} \left(\tilde{A}(x)\nabla u_1 - A(x,\nabla u_1) -(\tilde{A}(x)\nabla u_2-A(x,\nabla u_2))\right)\cdot \nabla \varphi \dx,
\end{split}
\end{equation}
which is valid for all $\varphi\in \mathcal{C}^{0,1}_0(\Omega;\setR^{n\times N})$, we can use Theorem~\ref{thm:ell} to obtain
\begin{equation}
\label{un3}
\int_{\Omega}|\nabla u_1-\nabla u_2|^2 \omega^n\dx \le C \int_{\Omega} \left|\tilde{A}(x)\nabla u_1 - A(x,\nabla u_1) -(\tilde{A}(x)\nabla u_2-A(x,\nabla u_2))\right|^2 \omega^n \dx
\end{equation}
with some constant $C$ independent of $n$. Moreover due to the properties of the solution and $\omega^n$ we can deduce that the integral appearing on the right hand side is finite. In order to continue, we first recall the following algebraic result, whose proof can be found at the end of this subsection.
\begin{lemma}\label{L:algebra}
Let $A$ fulfill \eqref{coercivity}, \eqref{growth}, \eqref{ass:A} and \eqref{ass:AA}. Then for every  $\delta>0$ there exists $C$ such that for all $x\in \Omega$ and all $\eta_1, \eta_2 \in \mathbb{R}^{n\times N}$ there holds
\begin{equation}\label{algebra}
|A(x,\eta_1)-A(x,\eta_2) -\tilde{A}(x)(\eta_1-\eta_2)|\le \delta |\eta_1-\eta_2| + C(\delta).
\end{equation}
\end{lemma}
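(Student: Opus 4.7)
The plan is to localize the problem via \eqref{ass:AA} and reduce the asserted inequality to a path integral estimate. Fix $\delta>0$, set $\varepsilon=\delta$, and let $k=k(\delta)$ be the threshold furnished by \eqref{ass:AA}, so that
\[
  \Bigl|\frac{\partial A(x,\eta)}{\partial \eta}-\tilde{A}(x)\Bigr|\le \delta
  \qquad \text{for almost all } x\in\Omega \text{ and all } |\eta|\ge k.
\]
In particular $A(x,\cdot)$ is locally Lipschitz on $\{|\eta|\ge k\}$ with $|\partial_\eta A|\le |\tilde{A}|+\delta$, so the fundamental theorem of calculus applies along any piecewise smooth path contained in $\{|\eta|\ge k\}$. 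We now split into three cases depending on the location of $\eta_1,\eta_2$ relative to $2k$.

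\emph{Case A: $|\eta_1|\le 2k$ and $|\eta_2|\le 2k$.} Using \eqref{growth} and \eqref{growth2} the left hand side of \eqref{algebra} is bounded by $c_2(2+|\eta_1|+|\eta_2|)+c_2(|\eta_1|+|\eta_2|)\le C(k)=C(\delta)$, which is absorbed by the constant term on the right hand side.

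\emph{Case B: $|\eta_1|,|\eta_2|\ge 2k$.} I construct a piecewise smooth path $\gamma\colon[0,L]\to\setR^{n\times N}$ from $\eta_2$ to $\eta_1$ staying in $\{|\eta|\ge k\}$ with total length $L(\gamma)\le |\eta_1-\eta_2|+\pi k$. If the straight segment $[\eta_2,\eta_1]$ already avoids $B_k(0)$ take $\gamma$ equal to it; otherwise, between the first entry and last exit of the segment into $\overline{B_k(0)}$ replace the straight piece by a great-circle arc on $\partial B_k(0)$ joining the entry and exit points (of length at most $\pi k$). Along $\gamma$ the fundamental theorem of calculus yields
\[
  A(x,\eta_1)-A(x,\eta_2)-\tilde{A}(x)(\eta_1-\eta_2)
  =\int_0^{L(\gamma)}\Bigl(\frac{\partial A(x,\gamma(s))}{\partial \eta}-\tilde{A}(x)\Bigr)\dot\gamma(s)\,ds,
\]
and \eqref{ass:AA} together with $|\dot\gamma|\equiv 1$ and $L(\gamma)\le |\eta_1-\eta_2|+\pi k$ gives
\[
  |A(x,\eta_1)-A(x,\eta_2)-\tilde{A}(x)(\eta_1-\eta_2)|\le \delta L(\gamma)\le \delta|\eta_1-\eta_2|+\pi k\delta.
\]

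\emph{Case C (mixed case, say $|\eta_1|\ge 2k>|\eta_2|$).} Introduce the intermediate point $\eta_3:=(2k/|\eta_1|)\eta_1$, so that $|\eta_3|=2k$ and the segment $[\eta_3,\eta_1]$ is entirely contained in $\{|\eta|\ge 2k\}$. Case B applied to $(\eta_3,\eta_1)$ gives $|A(x,\eta_1)-A(x,\eta_3)-\tilde{A}(x)(\eta_1-\eta_3)|\le \delta|\eta_1-\eta_3|\le \delta|\eta_1-\eta_2|+4k\delta$, while Case A applied to $(\eta_2,\eta_3)$ (both with norm $\le 2k$) gives $|A(x,\eta_3)-A(x,\eta_2)-\tilde{A}(x)(\eta_3-\eta_2)|\le C(k)$. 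Adding and using the triangle inequality yields \eqref{algebra} with constant $C(\delta)=4k\delta+C(k)+\pi k\delta$.

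The only delicate point is the path construction in Case B: one has to verify that the segment-plus-arc curve indeed lies in $\{|\eta|\ge k\}$ and has the claimed length bound, and that $A(x,\cdot)$ is absolutely continuous along it; both follow from the Lipschitz bound on $\partial_\eta A$ on $\{|\eta|\ge k\}$ coming from \eqref{ass:AA} and \eqref{growth2}.
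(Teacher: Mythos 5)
Your proof is correct, and in the main case it takes a genuinely different route from the paper. The paper also splits into ``both small'', ``mixed'', and ``both large'', but in the ``both large'' case it introduces a further dichotomy: if $|\eta_1|,|\eta_2|\le 2|\eta_1-\eta_2|$ it invokes the \emph{zeroth-order} asymptotic condition \eqref{ass:A} at each endpoint separately (each error is $\le\tfrac{\delta}{4}|\eta_i|\le\tfrac{\delta}{2}|\eta_1-\eta_2|$), and only in the remaining sub-case, where say $|\eta_1|>2|\eta_1-\eta_2|$ forces the straight segment $[\eta_1,\eta_2]$ to stay outside $B_k$, does it apply the fundamental theorem of calculus with \eqref{ass:AA} along that segment. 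You instead dispense with the dichotomy by always integrating \eqref{ass:AA} along a detour path that goes around $B_k$, paying the fixed extra length $\pi k$, which is absorbed into $C(\delta)$; in the mixed case the paper again uses \eqref{ass:A} on the large argument, whereas you interpolate through the radial point $\eta_3$ and reuse Cases A and B. A notable by-product of your argument is that condition \eqref{ass:A} is never actually invoked (only \eqref{ass:AA}, \eqref{growth} and the boundedness of $\tilde A$), so your proof is slightly more economical in hypotheses, at the price of the geometric path construction. Two cosmetic points: your detour arc lies on $\partial B_k$ itself, the boundary of the region where \eqref{ass:AA} is assumed; since both endpoints satisfy $|\eta_i|\ge 2k$ you could just as well route the arc along $\partial B_{2k}$ (or $\partial B_{3k/2}$) and stay safely inside $\{|\eta|>k\}$, matching the paper's segment which satisfies the strict bound $|t\eta_2+(1-t)\eta_1|\ge|\eta_1|/2>k$. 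Also, in Case C the estimate $|\eta_1-\eta_3|=|\eta_1|-2k\le|\eta_1|-|\eta_2|\le|\eta_1-\eta_2|$ holds outright, so the extra $4k\delta$ in your constant is unnecessary. Neither point affects correctness.
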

Next, using the estimate \eqref{algebra} in \eqref{un3}, we find that for all $\delta>0$
\begin{equation}
\label{un4}
\begin{split}
\int_{\Omega}|\nabla u_1-\nabla u_2|^2 \omega^n\dx \le C \int_{\Omega} \delta|\nabla u_1-\nabla u_2|^2\omega^n + C(\delta)\omega^n\dx.
\end{split}
\end{equation}
Thus, setting $\delta:=\frac{1}{2C}$, we can deduce that
\begin{equation}
\label{un5}
\begin{split}
\int_{\Omega}|\nabla u_1-\nabla u_2|^2 \omega^n\dx \le C(\delta) \int_{\Omega} \omega^n\dx\le C,
\end{split}
\end{equation}
where the last inequality follows from the fact that $\Omega$ is bounded and $\omega^n \le 1$. Hence, letting $n\to \infty$ in \eqref{un5}, using  that $\omega^n \nearrow 1$ (which follows from the fact that $\omega_0>0$  almost everywhere) and using the monotone convergence theorem, we find that
$$
\int_{\Omega}|\nabla u_1-\nabla u_2|^2\dx \le C.
$$
Hence, we see that  $u_1-u_2 \in W^{1,2}_0(\Omega; \setR^N)$. In addition, using \eqref{algebra} again, we have that
\begin{equation*}
\begin{split}
&\int_{\Omega}|A(x,\nabla u_1)-A(x,\nabla u_2)|^2\dx \\
&\le 2\int_{\Omega}|A(x,\nabla u_1)-\tilde{A}(x)\nabla u_1 -A(x,\nabla u_2)+\tilde{A}(x)\nabla u_2|^2\dx \\
&\quad +2\int_{\Omega}|\tilde{A}(x)\nabla u_1 -\tilde{A}(x)\nabla u_2|^2\dx \le C\left(1+\int_{\Omega} |\nabla u_1 -\nabla u_2|^2\dx\right) \le C.
\end{split}
\end{equation*}
Therefore, \eqref{un1} holds for all $\varphi\in W^{1,2}_0(\Omega; \setR^{n\times N})$ and consequently also for $\varphi:=u_1-u_2$ and the strict monotonicity finishes the proof of the uniqueness. It remains to prove Lemma~\ref{L:algebra}.
\begin{proof}[Proof of Lemma~\ref{L:algebra}] Let $\delta$ be given and fix. According to \eqref{ass:A} and \eqref{ass:AA} we can find $k>0$ (depending on $\delta$) such that for all $x\in \Omega$ and all $|\eta|\ge k$ we have
\begin{equation}\label{ass:F}
\frac{|A(x,\eta)-\tilde{A}(x)\eta|}{|\eta|}+\left|\frac{\partial A(x,\eta)}{\partial \eta} - \tilde{A}(x)\right| \le \frac{\delta}{4}.
\end{equation}
To prove \eqref{algebra} we shall discus all possible cases of values $\eta_1, \eta_2$. Recall here, that $\delta$ and $k$ are already fix.

\bigskip
\paragraph{\bf The case $|\eta_1|\le 2k$ and $|\eta_2|\le 2k$}In this case we can simply use \eqref{growth} to show that
$$
|A(x,\eta_1)-A(x,\eta_2) -\tilde{A}(x)(\eta_1-\eta_2)|\le C(1+|\eta_1|+|\eta_2|)\le C(1+4k)
$$
and \eqref{algebra} follows.

\bigskip
\paragraph{\bf The case $|\eta_1|\le 2k$ and $|\eta_2|> 2k$} In this case we again use \eqref{growth} and combined with \eqref{ass:F} leads  to
$$
\begin{aligned}
&|A(x,\eta_1)-A(x,\eta_2) -\tilde{A}(x)(\eta_1-\eta_2)|\le C(1+|\eta_1|) + \left|\frac{\tilde{A}(x)\eta_2 - A(x,\eta_2)}{|\eta_2|}\right||\eta_2|\\
&\le C(1+2k)+\frac{\delta |\eta_2|}{2} \le C(1+2k+|\eta_1|)+\frac{\delta |\eta_2-\eta_1|}{2}\le C(1+4k)+\delta |\eta_2-\eta_1|.
\end{aligned}
$$
Therefore, \eqref{algebra} holds. Moreover, the case $|\eta_1|\ge 2k$ and $|\eta_2|\le 2k$ is treated similarly.

\bigskip
\paragraph{\bf The case $|\eta_1|> 2k$ and $|\eta_2|> 2k$}
First, let us also assume that
\begin{equation}\label{alas}
|\eta_2|\le 2|\eta_1-\eta_2|\qquad \textrm{ and } \qquad |\eta_1|\le 2|\eta_1-\eta_2|
\end{equation}
 In this setting, we use \eqref{ass:F} to conclude
$$
\begin{aligned}
&|A(x,\eta_1)-A(x,\eta_2) -\tilde{A}(x)(\eta_1-\eta_2)|\\
&\qquad \le \left|\frac{\tilde{A}(x)\eta_1 - A(x,\eta_1)}{|\eta_1|}\right||\eta_1| + \left|\frac{\tilde{A}(x)\eta_2 - A(x,\eta_2)}{|\eta_2|}\right||\eta_2| \le \frac{\delta}{4} (|\eta_1|+|\eta_2|)\le \delta|\eta_1-\eta_2|,
\end{aligned}
$$
which again directly implies \eqref{algebra}. Finally, it remains to discuss the case when at least one of the inequalities in \eqref{alas} does not hold. For simplicity, we consider only the case when $|\eta_1|> 2|\eta_1-\eta_2|$, since the second case can be treated similarly.  First of all, using the assumption on $\eta_1$ and $\eta_2$ we deduce that for all $t\in [0,1]$
$$
|t\eta_2 + (1-t)\eta_1|=|\eta_1 -t(\eta_1-\eta_2)|\ge |\eta_1|-t|\eta_1-\eta_2|\ge |\eta_1|-|\eta_1-\eta_2|\ge \frac{|\eta_1|}{2} \ge k.
$$
Hence, since any convex combination of $\eta_1$ and $\eta_2$ is outside of the ball or radius $k$, we can use the assumption \eqref{ass:F} to conclude
$$
\begin{aligned}
&|A(x,\eta_2)-A(x,\eta_1) -\tilde{A}(x)(\eta_2-\eta_1)|\\
&\quad =\left|\int_0^1\frac{d}{dt} \left(A(x,t\eta_2+(1-t)\eta_1) - \tilde{A}(x)(t\eta_2+(1-t)\eta_1)\right) \dt \right|\\
&\quad =\left|\int_0^1 \left(\frac{\partial A(x,t\eta_2+(1-t)\eta_1)}{\partial (t\eta_2+(1-t)\eta_1)} - \tilde{A}(x)\right)(\eta_2-\eta_1) \dt \right| \le\int_0^1 \frac{\delta}{4}|\eta_2-\eta_1| \dt \le\delta |\eta_2-\eta_1|
\end{aligned}
$$
and \eqref{algebra} follows.
\end{proof}

\section{Proof of Theorem~\ref{thm:ell}}\label{S:forjistota}
We start the proof by getting the a~priori estimate in the standard non-weighted Lebesgue  spaces, which is available due to Lemma~\ref{lem:CZ}.
Let us fix a ball~$Q_0$ such that $\Omega \subset Q_0$.  Since $\omega
\in \mathcal{A}_p$, we can use~\eqref{eq:lqprop} to show that for some
$\tilde{q}>1$, we have $L^p_\omega(Q_0) \embedding
L^{\tilde{q}}(Q_0)$. Thus $f \in L^p_\omega(\Omega; \setR^{n\times N})$ implies that $f \in
L^{\tilde{q}}(\Omega; \setR^{n\times N})$. The starting point of  further analysis is the use of Lemma~\ref{lem:CZ}, which leads to  the existence of a unique solution $u\in W^{1,\tilde{q}}_0(\Omega;\setR^{N})$ to \eqref{weakfppois} with the a~priori bound
\begin{align*}
  \bigg(\int_{\Omega}|\nabla u|^{\tilde{q}}\dx \bigg)^{\frac
    1{\tilde{q}}} \le C(A,\tilde{q},\Omega)\bigg(
  \int_{\Omega}|f|^{\tilde{q}}\dx \bigg)^{\frac 1{\tilde{q}}}.
\end{align*}
Consequently, using~\eqref{eq:lqprop}, we deduce
\begin{align}
  \bigg(\frac 1{\abs{Q_0}} \int_{\Omega}|\nabla u|^{\tilde{q}}\dx
  \bigg)^{\frac 1{\tilde{q}}} \leq C(A,p,\Omega,
  \mathcal{A}_p(\omega)) \bigg( \frac{1}{\omega(Q_0)}
  \int_{\Omega}|f|^p \omega\,dx \bigg)^{\frac 1p}.
  \label{stth-new}
\end{align}
It remains to prove the a~priori estimate~\eqref{keyest}.  We divide
the proof into several steps.  In the first one, we shall prove the
local (in $\Omega$) estimates. Then we extend such a result up to the
boundary and finally we combine them together to get
Theorem~\ref{thm:ell}.  \bigskip
\subsection{Interior estimates:} This part is devoted to the estimates that are local in $\Omega$, i.e., we shall prove the following.
\begin{lemma}
\label{thm:ell-local}
Let $B\subset \setR^n$ be a ball, $\omega \in \Acal_p$ be arbitrary with some $p\in (1,\infty)$ and $A\in L^{\infty}(2B;\setR^{n\times N \times n \times N})$ be arbitrary satisfying
$$
c_1 |\eta|^2\le A(x) \eta \cdot \eta \le c_2|\eta|^2 \textrm{ for all } x\in 2B \textrm{ and all } \eta \in \setR^{n\times N}.
$$
Then there exists $\delta>0$ depending only on $p$, $c_1$, $c_2$ and $A_p(\omega)$ such that if
$$
|A(x)-A(y)| \le \delta \textrm{ for all } x,y\in 2B
$$
then for arbitrary $f\in L^p_\omega(2B; \setR^{n\times N})$ and $u\in W^{1,\tilde{q}}(2B;\setR^N)$ with some $\tilde{q}>1$ satisfying
$$
\int_{2B} A(x) \nabla u(x) \cdot \nabla \varphi(x) \dx= \int_{2B} f(x) \cdot \nabla \varphi(x) \dx\quad \textrm{ for all } \varphi \in \mathcal{C}^{0,1}_0(2B; \setR^N),
$$
the following holds
\begin{equation}\label{unlocal}
  \bigg(\dashint_B\abs{\nabla u}^p\omega \dx\bigg)^\frac1p\leq
  C\bigg(  \dashint_{2B}\abs{f}^p\omega \dx\bigg)^\frac1p
  + C \bigg(\dashint_{2B}\omega \dx\bigg)^{\frac{1}{p}}\bigg(\dashint_{2B}\abs{\nabla u}^{\tilde{q}} \dx\bigg)^{\frac{1}{\tilde{q}}},
\end{equation}
where the constant $C$ depends only  on $p$, $c_1$, $c_2$ and $A_p(\omega)$.
\end{lemma}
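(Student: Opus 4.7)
The plan is to prove~\eqref{unlocal} by a standard freezing-of-coefficients argument combined with a good-$\lambda$ passage to the weighted norm. I would set $\bar A:=A(x_B)$ at the center of $B$; the oscillation hypothesis then reads $\norm{A-\bar A}_{L^\infty(2B)}\leq\delta$, and the equation can be recast as
\[
  -\divergence(\bar A\,\nabla u)=-\divergence\bigl(f+(\bar A-A)\nabla u\bigr)\qquad\text{in }2B.
\]
For every ball $B_r(y)$ with $B_{2r}(y)\subset\frac32 B$, I would decompose $u=v+w$ on $B_r(y)$, with $v$ the solution of the homogeneous constant-coefficient system $-\divergence(\bar A\nabla v)=0$ in $B_r(y)$ having $v=u$ on $\partial B_r(y)$, and $w\in W^{1,\tilde q}_0(B_r(y))$ absorbing the error. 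Applying Lemma~\ref{lem:CZ} at exponent $\tilde q$ to the constant-coefficient operator acting on $w$ yields
\[
  \Bigl(\dashint_{B_r(y)}|\nabla w|^{\tilde q}\dx\Bigr)^{1/\tilde q}\le
  C\Bigl(\dashint_{B_r(y)}|f|^{\tilde q}\dx\Bigr)^{1/\tilde q}
  +C\delta\Bigl(\dashint_{B_r(y)}|\nabla u|^{\tilde q}\dx\Bigr)^{1/\tilde q},
\]
while the classical interior regularity of constant-coefficient elliptic systems supplies the Campanato-type decay $\dashint_{B_{\theta r}(y)}|\nabla v-(\nabla v)_{B_{\theta r}(y)}|^{\tilde q}\dx\le C\theta^{\tilde q}\dashint_{B_r(y)}|\nabla v|^{\tilde q}\dx$ for every $\theta\in(0,\tfrac12]$.

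Combining these two ingredients and optimizing in the radius ratio $\theta$ yields a pointwise sharp-maximal-function inequality of the form
\[
  M^{\sharp}_{\tilde q}\bigl(\nabla u\,\chi_{\frac32 B}\bigr)(x)\le
  C\bigl(M(|f|^{\tilde q}\chi_{2B})(x)\bigr)^{1/\tilde q}
  +C\delta^{\sigma}\bigl(M(|\nabla u|^{\tilde q}\chi_{2B})(x)\bigr)^{1/\tilde q}
\]
for $x\in B$ and some $\sigma>0$ depending only on $n$ and $\tilde q$. Since $\omega\in\Acal_p\subset\Acal_\infty$ and $p>1$, this estimate can be turned into a weighted $L^p_\omega$-bound on $\nabla u$ by the weighted Fefferman--Stein inequality together with the weighted $(p,p)$-boundedness of~$M$. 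A smallness of $\delta$ depending only on $p$, $c_1$, $c_2$ and $A_p(\omega)$ then permits the $\delta^{\sigma}$-term on the right to be absorbed into the left.

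The main technical obstacle is precisely this absorption: since the weighted integrability of $\nabla u$ is what we are trying to prove, one cannot a priori absorb $M(|\nabla u|^{\tilde q})$ on an infinite quantity. I would therefore work with the truncation $G_N:=M(|\nabla u|^{\tilde q}\chi_{2B})\wedge N$, perform the absorption for each $N$ with constants independent of $N$, and then let $N\to\infty$ by monotone convergence. The residual term arising from covering $B$ by balls that approach the boundary of $\frac32 B$ produces a constant multiple of $(\dashint_{2B}|\nabla u|^{\tilde q}\dx)^{1/\tilde q}$ on~$B$; its $L^p_\omega(B)$-norm is exactly $|B|^{1/p}(\dashint_{2B}\omega\dx)^{1/p}(\dashint_{2B}|\nabla u|^{\tilde q}\dx)^{1/\tilde q}$, matching the second term on the right-hand side of~\eqref{unlocal}.
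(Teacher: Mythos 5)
Your route --- freeze the coefficients at the center of $B$, compare on each small ball with the $\bar A$-harmonic part, deduce a pointwise sharp-maximal-function inequality, and close with the weighted Fefferman--Stein inequality --- is a recognized alternative to what the paper does. The paper instead runs a good-$\lambda$ argument directly on the level sets $O_\lambda=\set{M_q(\chi_{2B}\nabla u)>\lambda}$ of a \emph{restricted} power maximal function, with a Besicovich stopping-time cover, the same frozen-coefficient comparison and Harnack bound on each covering ball, and a layer-cake integration in which the absorption is carried out on the finite truncated integrals $\int_{k\Lambda}^m\lambda^{p-1}\omega(O_\lambda\cap B)\,{\rm d}\lambda$ before letting $m\to\infty$. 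Both schemes can work, but as written your sketch has two genuine gaps. The first is the exponent: you use $M(|\cdot|^{\tilde q})^{1/\tilde q}$ throughout, and the final step needs this operator to be bounded on $L^p_\omega$, i.e.\ $\omega\in\Acal_{p/\tilde q}$, while the comparison step needs $f\in L^{\tilde q}$ locally. Neither is available: $\tilde q$ may exceed $p$; even if $p>\tilde q$, $\omega\in\Acal_p$ need not lie in $\Acal_{p/\tilde q}$; and $f\in L^p_\omega$ only yields $f\in L^{q_0}$ for some $q_0>1$ which may be below $\tilde q$. Also the constant must not depend on $\tilde q$. The fix is the one the paper makes explicit: use the openness of the Muckenhoupt classes (Lemma~\ref{cor:leftopen}) to pick $q\in(1,\min\set{p,\tilde q,q_0})$ with $\omega\in\Acal_{p/q}$, run the whole argument at exponent $q$, and recover the $\tilde q$-average in \eqref{unlocal} by H\"older at the end.

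The second gap is the absorption, which you correctly identify as the crux but do not resolve. After Fefferman--Stein the error term $\delta^\sigma\norm{(M(|\nabla u|^{\tilde q}\chi_{2B}))^{1/\tilde q}}_{L^p_\omega}$ dominates $\norm{\nabla u}_{L^p_\omega(\frac32B)}$ (even after the far part over $2B\setminus\frac32B$ is split off into the $\Lambda$-term), whereas the left-hand side controls only $\norm{\nabla u}_{L^p_\omega(B)}$; so besides finiteness there is a domain mismatch that forces an iteration over intermediate balls. More importantly, truncating $G_N:=M(|\nabla u|^{\tilde q}\chi_{2B})\wedge N$ does not produce a valid chain of inequalities: the pointwise sharp-function bound and the Fefferman--Stein inequality are statements about the untruncated quantities, and the weighted Fefferman--Stein inequality itself already presupposes a qualitative finiteness of the left-hand side, which is exactly what is unknown here. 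The clean way to make this rigorous is to prove the underlying good-$\lambda$ inequality by hand and integrate it only up to a finite level $m$ (everything is then finite because $\omega(B)<\infty$ and the level sets are intersected with $B$), absorb, and let $m\to\infty$ --- which is precisely the structure of the paper's proof. Your plan is salvageable along these lines, but the truncation step as described is where the argument does not yet close.
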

\begin{proof}[Proof of Lemma~\ref{thm:ell-local}]
First, we introduce some more notation. For $\omega$ we denote $\omega(S):=\int_S \omega \dx$. Next, using Lemma~\ref{cor:leftopen}, we can find $q\in (1,\tilde{q})$ such that $\omega \in \Acal_{\frac{p}{q}}$. Note here that $u\in W^{1,q}(2B;\setR^{N})$, which follows from the fact that $2B$ is bounded. In what follows, we fix such $q$ and introduce the centered maximal operator with power $q$
\[
 (M_q(g))(x):=\sup_{r>0}\bigg(\dashint_{B_r(x)}\abs{g}^q \dy\bigg)^\frac1q.
\]
Since, $M_q(g)=(M(\abs{g}^q))^\frac1q$, we see that from the definition and the choice of $q$ (which leads to $\omega\in \Acal_\frac{p}{q}(\setR^n)$) that  the operator $M_q$ is bounded in $L^p_{\omega}(\setR^n)$. We shall also use the restricted maximal operator
\[
 (M_q^{<\rho}(g))(x)=\sup_{\rho\geq r>0}\bigg(\dashint_{B_r(x)}\abs{g}^q dy\bigg)^\frac1q
\]
and it directly follows that for every Lebesgue point $x$ of $g$
\[
 |g(x)|\leq (M_q^{<\rho}(g))(x)\leq (M_q(g))(x).
\]

The inequality \eqref{unlocal} will be proven be using the proper estimates on the  level sets for $|\nabla u|$ defined through
\[
 O_\lambda:=\set{x\in \setR^n; \, M_q(\chi_{2B}\nabla u)(x)> \lambda}.
\]
Please observe that $O_{\lambda}$ are open. Next, we use the \Calderon-Zygmund decomposition. Thus, for fixed $\lambda>0$ and $x\in B\cap Q_{\lambda}$, using the continuity  of the integral with respect to the integration domain, we can find a ball $Q_{r_x}(x)$, such that
\begin{align}
\label{eq:cover1}
 \lambda^q< \dashint_{Q_{r_x}(x)}\abs{\chi_{2B}\nabla u}^q dx\leq 2\lambda^q\quad \text{ and } \quad\dashint_{Q_{r}(x)}\abs{\chi_{2B}\nabla u}^q dx \leq 2\lambda^q\text{ for all }r\geq r_{x}.
\end{align}
Next, using the Besicovich covering theorem, we can extract a countable
subset $Q_i:=Q_{r_i}(x_i)$, such that the
$Q_i$ have finite intersection, i.e., there exists a constant $C$ depending only on $n$ such that for all $i\in \mathbb{N}$
$$
\# \{j\in \mathbb{N}; \, Q_i\cap Q_j \neq \emptyset\} \le C.
$$
In addition, it follows from the construction that
\begin{align}\label{realn}
O_{\lambda}\cap B=\bigcup_{i\in \mathbb{N}} (Q_i\cap B).
\end{align}
Then we set
\[
  \Lambda:=\bigg(\dashint_{2B} \abs{\nabla u}^q dx\bigg)^\frac1q
\]
and it directly follows that for any $Q\subset \setR^n$
\[
 \bigg(\dashint_Q \abs{\chi_{2B}\nabla u}^q dx\bigg)^\frac1q\leq \left(\frac{\abs{2B}}{\abs{Q}}\right)^\frac1q\Lambda.
\]
Consequently, assuming that $\lambda\geq 2^{2n} \Lambda$, we can deduce for every $Q_i$ that
\[
 2^{2n} \Lambda \le \lambda < \bigg(\dashint_{Q_i} \abs{\chi_{2B}\nabla u}^q dx\bigg)^\frac1q \le \left(\frac{\abs{2B}}{\abs{Q_i}}\right)^\frac1q\Lambda= 2^{\frac{2n}{q}} \left(\frac{\abs{B}}{\abs{2Q_i}}\right)^\frac1q\Lambda.
\]
Since $q\ge 1$, this inequality directly leads to $\abs{2 Q_i}\leq \abs{B}$. Therefore, using the fact that $Q_i=Q_{r_i}(x_i)$ with some $x_i\in B$, we observe that  $2Q_i\subset 2B$. Moreover, it is evident that for some constant $C$ depending only on the dimension $n$, we have
\begin{align}
\label{eq:cover2}
\abs{Q_i}\le C(n)\abs{Q_i\cap B}.
\end{align}
Since $\omega\in \mathcal{A}_p$, the above relation implies
(see e.g. V~1.7~\cite{Ste93})
\begin{align}
  \label{eq:cover2omega}
  \omega(Q_i)\le C(n, A_p(\omega))\, \omega(Q_i\cap B).
\end{align}
Next, for arbitrary $\epsilon>0$ and $k\ge 1$, we introduce the re-distributional set
\[
 U^\lambda_{\epsilon, k}:=O_{k\lambda}\cap\set{x\in \setR^n;\, M_q(f\chi_{2B})(x)\leq \epsilon\lambda}.
\]
Finally, we shall assume the  following (recall that $\delta$ comes from the assumption of Lemma~\ref{thm:ell-local}):
\begin{equation}
\begin{aligned}
 \label{eq:redis}
 &\textrm{There exists $k\ge 1$ depending only on $c_1$, $c_2$, $n$, $p$, $A_p(\omega)$ such that for all $\varepsilon\in  (0,1)$}\\
  &\textrm{and all $\lambda\ge 2^{2^n}\Lambda$ there holds }\qquad
\abs{Q_i\cap U^\lambda_{\epsilon, k}\cap B}\leq C(c_1,c_2,n)(\epsilon +\delta)\abs{Q_i}.
\end{aligned}
\end{equation}

We postpone the proof of \eqref{eq:redis}  and continue assuming that it holds true with fix $k$ such  that \eqref{eq:redis} is valid. Hence, using \eqref{eq:redis},  the  H\"older and the reverse H\"older inequality (which follows
for $\Acal_p$-weights from \eqref{reversom})
and~\eqref{eq:cover2omega}, we obtain for some $r>1$ depending only on
$n$, $p$ and $A_p(\omega)$
\begin{align*}
  &\omega(Q_i \cap U_{\epsilon, k}^\lambda \cap B) \leq C(n) \abs{Q_i}
  \bigg(\dashint_{Q_i}\omega^r
  \dx\bigg)^\frac1r\left(\frac{\abs{Q_i\cap
        U^\lambda_{\epsilon,k}\cap B}}{\abs{Q_i}}\right)^\frac1{r'}
  \\
  &\quad \leq C(n,p,A_p(\omega),c_1,c_2)(\epsilon+\delta)^\frac1{r'}
  \omega(Q_i)\leq  C(n,p,A_p(\omega),c_1,c_2)(\epsilon+\delta)^\frac1{r'}
  \omega(Q_i \cap B).
\end{align*}
By using the finite intersection property of the~$Q_i$ we find
\begin{align}
\label{eq:smallness}
\omega(U_{\epsilon, k}^\lambda \cap B)\leq C(n,A_p(\omega),c_1,c_2)(\epsilon+\delta)^\frac1{r'}\omega(O_\lambda\cap B).
\end{align}
Finally, using the Fubini theorem, we obtain
\begin{align}\label{sce}
 \int_B\abs{\nabla u}^p \omega \dx = p\int_0^\infty \!\!\omega(\set{(\nabla u)\chi_{B}>\lambda})\lambda^{p-1} {\rm{d}}\lambda\leq \Lambda^p\omega(B) + p\int_{\Lambda}^\infty\lambda^{p-1}\omega(O_\lambda \cap B)\rm{d}\lambda.
\end{align}
Therefore, to get the estimate \eqref{unlocal}, we need to estimate the last term on the right hand side. To do so, we use the definition of $U^{\lambda}_{\epsilon, k}$ and the substitution theorem, which leads for all $m> k\Lambda$ to
\begin{align*}
 \int_{k\Lambda}^m\lambda^{p-1}\omega(O_\lambda \cap B)\rm{d}\lambda& \leq \int_{k\Lambda}^m\lambda^{p-1}\omega(U^{\frac{\lambda}{k}}_{\epsilon, k}\cap B){\rm{d}}\lambda
+ \int_{k\Lambda}^m\lambda^{p-1}\omega\Big(\bigset{M_q(f\chi_{2B})>\epsilon\frac{\lambda}{k}}\Big){\rm{d}}\lambda\\
&\overset{\eqref{eq:smallness}}\leq C(\epsilon+\delta)^\frac1{r'}\int_{k\Lambda}^m\lambda^{p-1}\omega(O_\frac{\lambda}{k}\cap B){\rm{d}}\lambda+ \frac{k^p}{p\epsilon^p}\int_{\setR^n}\abs{M_q(f\chi_{2B})}^p \omega \dx
\end{align*}
\begin{align*}
&\le C(p,q,\epsilon,A_p(\omega))\int_{2B}\abs{f}^p \omega \dx +Ck^{p}(\epsilon+\delta)^\frac1{r'}\int_{\Lambda}^\frac{m}{k}\lambda^{p-1}\omega(O_\lambda\cap B){\rm{d}}\lambda\\
&\le C(p,q,\epsilon,A_p(\omega))\int_{2B}\abs{f}^p \omega \dx +Ck^{p}(\epsilon+\delta)^\frac1{r'}\int_{\Lambda}^{k\Lambda}\lambda^{p-1}\omega(O_\lambda\cap B){\rm{d}}\lambda\\
&\quad +
Ck^{p}(\epsilon+\delta)^\frac1{r'}\int_{k\Lambda}^m\lambda^{p-1}\omega(O_\lambda\cap B){\rm{d}}\lambda,
\end{align*}
where  we used the fact that $\omega \in \Acal_{\frac{p}{q}}$. Finally, assuming (note that $k$ is already fix by \eqref{eq:redis} and at this point, we fix the maximal value of $\delta$ arising in the assumption of Lemma~\ref{thm:ell-local}) that $\delta$ is so small that $Ck^{p}\delta\frac1{r'}\le\frac18$, we can find  $\epsilon\in (0,1)$ such that $Ck^{p}(\epsilon+\delta)^\frac1{r'}\le\frac12$. Consequently, we  absorb the last term into the left hand side and letting $m\to \infty$, we find that
\begin{align*}
 \int_{k\Lambda}^{\infty}\lambda^{p-1}\omega(O_\lambda \cap B)\rm{d}\lambda
&\le C(k,p,q,A_p(\omega))\left(\int_{2B}\abs{f}^p \omega \dx+ \Lambda^p \omega(B)\right).
\end{align*}
Substituting this into \eqref{sce}, we find \eqref{unlocal}. To finish the proof, it remains to find $k\ge 1$ such that  \eqref{eq:redis} holds.

Hence, assume that $Q_i\cap B\cap U^\lambda_{\epsilon, k}\neq\emptyset$. Then it follows from the definition of $U^{\lambda}_{\epsilon,k}$ that
\begin{align}\label{smalfe}
 \bigg(\dashint_{2Q_i}\abs{f}^q \dx\bigg)^\frac1q\leq 2^n\epsilon \lambda.
\end{align}
For $\lambda \geq 2^{2n} \Lambda$ (which implies $2Q_i \subset 2B$) we
compare the original problem with the following
\begin{align}
 \label{eq:har}
\begin{aligned}
-\divergence (A_i\nabla h) &= 0 &&\text{ in }2Q_i\\
h&=u &&\text{ on }\partial (2Q_i),
\end{aligned}
\end{align}
where the matrix $A_i$ is defined as $A_i:=A(x_i)$. Lemma~\ref{lem:CZ}
ensures the existence of such a solution (just consider $u-h$ with
zero boundary data). Moreover, the matrix $A_i$ is constant and elliptic and therefore we have the weak Harnack
inequality for $h$, i.e.,
\begin{align}
  \label{eq:harnack}
  \sup_{\frac32 Q_i}\abs{\nabla h}\leq
  C\dashint_{2Q_i}\abs{\nabla h}\dx,
\end{align}
where the constant $C$ depends only on $n$, $c_1$ and
$c_2$. Further, since $u$ solves our original problem, we find
$$
\begin{aligned}
-\divergence(A_i\nabla(u-h))&=-\divergence((A-A_i)\nabla u-f) &&\textrm{in }2Q_i,\\
u-h&=0 &&\textrm{on }\partial 2Q_i.
\end{aligned}
$$
Therefore, we can use Lemma~\ref{lem:CZ} to observe
\begin{align}
\label{eq:comp}
 \dashint_{2Q_i}\abs{\nabla(u-h)}^q\, \dx\leq C\dashint_{2Q_i}\abs{A-A_i}^q\abs{\nabla u}^q \dx + C\dashint_{2Q_i}\abs{f}^q \dx\leq C(\epsilon^q +\delta^q)\lambda^q,
\end{align}
where for the second inequality we used \eqref{eq:cover1},
\eqref{smalfe} and the assumption that $|A(x)-A(y)|\le \delta$ for all $x,y\in B$. Then using the definition of $Q_i$, we see that for
all  $y\in Q_i$ and all $r>\frac{r_i}{2}$, we have that $B_{r}(y)\subset B_{3r}(x_i) $ and $Q_i\subset B_{3r}(x_i)$. Consequently,
\[
 \dashint_{B_r(y)}\abs{\chi_{2B}\nabla u}^q \dx \leq 3^n \dashint_{B_{3r}(x_i)}\abs{\chi_{2B}\nabla u}^q \dx  \leq 6^n\lambda^q,
\]
where we used \eqref{eq:cover1}. Choosing $k\geq 6^n$ and assuming
that $\epsilon,\delta \le 1$ we get by the previous estimate, the
sub-linearity of the maximal operator and the weak  Harnack inequality \eqref{eq:harnack}  that for all $x\in Q_i\cap \set{M_q(\nabla u)>k\lambda}$
\begin{align*}
 M_q(\nabla u)(x)&=M_q^{<\frac{r_i}{2}}(\nabla u)(x)
\leq M_q^{<\frac{r_i}{2}}(\nabla h)(x)+M_q^{<\frac{r_i}{2}}(\nabla u-\nabla h)(x)\\
 &\leq  C\bigg(\dashint_{2Q_i}\abs{\nabla h}^q \dx\bigg)^\frac1q +M_q^{<\frac{r_i}{2}}(\nabla u-\nabla h)(x)\leq C\lambda+M_q^{<\frac{r_i}{2}}(\nabla u-\nabla h)(x).
\end{align*}
Hence, setting $k:=\max\set{C+1,6^{n}}$, we can use the weak $L^q$ estimate for the maximal functions and the estimate \eqref{eq:comp} to conclude
\begin{align*}
 \abs{\set{M_q(\nabla u)> k\lambda}\cap Q_i}&\leq \abs{ \set{M_q^{<\frac{r_i}{2}}(\nabla u-\nabla h)\geq \lambda}\cap Q_i}\leq \frac{C}{\lambda^q} \int_{2Q_i}\abs{\nabla (u-h)}^q \dx\\
 &\leq C(\epsilon+\delta)\abs{Q_i},
\end{align*}
which finishes the proof of \eqref{eq:redis} and Lemma~\ref{thm:ell-local}.
\end{proof}

\subsection{Estimates near the boundary:} In this part, we generalize the result from the previous paragraph and extend its validity also to the neighborhood of the boundary.
\begin{lemma}
\label{lem:halfball}
Let $\Omega\subset \setR^n$ be a domain with $\mathcal{C}^1$ boundary,  $\omega \in \Acal_p $ be arbitrary with some $p\in (1,\infty)$ and $A\in L^{\infty}(\Omega;\setR^{n\times N \times n \times N})$ be arbitrary satisfying
$$
c_1 |\eta|^2\le A(x) \eta \cdot \eta \le c_2|\eta|^2 \textrm{ for all } x\in 2B \textrm{ and all } \eta \in \setR^{n\times N}.
$$
Then there exists $r^*>0$ and $\delta>0$ depending only on $\Omega$, $p$, $c_1$, $c_2$ and $A_p(\omega)$ such that if
$$
\sup_{x,y \in \Omega; \, |x-y|\le r^*}|A(x)-A(y)| \le \delta
$$
then for arbitrary $f\in L^p_\omega(\Omega; \setR^{n\times N})$ and $u\in W^{1,\tilde{q}}_0(\Omega;\setR^N)$ with some $\tilde{q}>1$ satisfying
\begin{equation}\label{wfL2}
\int_{\Omega} A \nabla u \cdot \nabla \varphi \dx= \int_{\Omega} f \cdot \nabla \varphi \dx\quad \textrm{ for all } \varphi \in \mathcal{C}^{0,1}_0(\Omega; \setR^N),
\end{equation}
we have  for all $x_0 \in \overline{\Omega}$ and all $r\le r^*$ the following estimate
\begin{equation}\label{uptobound}
\begin{split}
\dashint_{B_{r}(x_0)\cap\Omega}\!\!\!\abs{\nabla u}^p\omega
\dx &\leq
\dashint_{B_{2r}(x_0)\cap\Omega} \!\!\!C\abs{f}^p\omega
\dx+ \dashint_{B_{2r}(x_0)\cap\Omega}\!\!\!\omega
\dx
\bigg(\dashint_{B_{2r}(x_0)\cap\Omega} \!\!\!C\abs{\nabla u}^{\tilde{q}} \dx\bigg)^{\frac{p}{\tilde{q}}}.
\end{split}
\end{equation}
\end{lemma}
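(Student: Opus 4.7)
The plan is to localize using the $\mathcal{C}^1$-structure of $\partial\Omega$ and then to rerun the Calder\'on--Zygmund scheme from the proof of Lemma~\ref{thm:ell-local}, with a boundary version of the weak Harnack estimate~\eqref{eq:harnack} as the new ingredient. When $B_{2r}(x_0)\subset\Omega$, the bound~\eqref{uptobound} follows at once from Lemma~\ref{thm:ell-local} applied on a slightly enlarged interior ball, so the only genuinely new case is $B_{2r}(x_0)\cap\partial\Omega\neq\emptyset$.

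For a point $x_0$ close to~$\partial\Omega$, I would fix a $\mathcal{C}^1$-diffeomorphism $\Phi$ that straightens $\partial\Omega$ on a neighbourhood of scale $r^*$, mapping $\Omega\cap B_{r^*}(x_0)$ onto the upper half ball $B^+:=\set{y\in B_1(0):y_n>0}$. The weak formulation~\eqref{wfL2} pulled back through $\Phi$ becomes a system of the same type on~$B^+$ with an elliptic coefficient matrix $\widetilde{A}$, a modified right-hand side $\tilde f$, a transplanted weight $\tilde\omega$, and a solution $\tilde u$ that vanishes on the flat piece $\set{y_n=0}\cap B_1(0)$. Since $\overline\Omega$ is compact and $D\Phi$ is uniformly continuous, choosing $r^*$ small enough forces the oscillation of $\widetilde A$ over $B^+$ to stay below $2\delta$ and, by the standard covariance of Muckenhoupt weights under bi-Lipschitz diffeomorphisms, keeps $\tilde\omega$ in $\Acal_p$ with a controlled constant.

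On the flattened picture I would replay the proof of Lemma~\ref{thm:ell-local} with the following modification. The Calder\'on--Zygmund cubes $Q_i$ coming from the level sets of $M_q(\chi_{B^+}\nabla\tilde u)$ split into those with $2Q_i\subset B^+$, which are treated exactly as in the interior case, and those meeting the flat piece $\set{y_n=0}$. For the boundary cubes I would replace the interior comparison problem~\eqref{eq:har} by the Dirichlet problem $-\divergence(\widetilde A_i\nabla h)=0$ in $2Q_i\cap B^+$ with $h=\tilde u$ on the curved part of $\partial(2Q_i\cap B^+)$ and $h=0$ on $\set{y_n=0}\cap 2Q_i$, where $\widetilde A_i$ is a frozen value of $\widetilde A$ on $Q_i$. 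Odd reflection across $\set{y_n=0}$ turns $h$ into a weak solution of an elliptic system with constant coefficients on the whole of $2Q_i$, so~\eqref{eq:harnack} still applies and yields the pointwise control on $\nabla h$ needed for the redistribution step, while Lemma~\ref{lem:CZ} applied to $\tilde u-h$ on the reflected domain gives the analogue of~\eqref{eq:comp}. From there the level-set manipulation, the smallness estimate~\eqref{eq:smallness}, and the final absorption argument carry over verbatim and produce~\eqref{uptobound} for $\tilde u$ on $B^+$. Pulling back through $\Phi$ and combining with Lemma~\ref{thm:ell-local} on balls $B_{2r}(x_0)\subset\Omega$ gives the statement.

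The main obstacle is the joint calibration of the two small parameters $\delta$ and $r^*$: $\delta$ must stay below the threshold dictated by the interior argument of Lemma~\ref{thm:ell-local}, and simultaneously $r^*$ must be small enough that, after the $\mathcal{C}^1$-change of variables, the induced extra oscillation of $\widetilde A$ and the chart distortion of $A_p(\omega)$ do not spoil the absorption step. A secondary technical point is the reflection argument for the boundary comparison problem, which needs to be carried out component-wise through an even--odd decomposition so that the reflected $h$ remains a weak solution of a constant-coefficient elliptic system and the classical interior Schauder-type estimates are available as stated.
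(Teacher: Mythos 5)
Your proposal is correct in outline, but it takes a genuinely different route from the paper. You flatten the boundary with a $\mathcal{C}^1$ chart and then \emph{re-run} the Calder\'on--Zygmund/level-set iteration of Lemma~\ref{thm:ell-local} on the half-ball, introducing a boundary comparison problem with zero data on the flat piece and recovering the Harnack-type gradient bound for the comparison function $h$ by odd reflection. The paper never redoes the iteration at the boundary: it reflects the \emph{entire problem} across the curved boundary using the graph reflection $T(x',x_n)=(x',2a(x')-x_n)$ (so $|\det J|\equiv 1$), extending $u$ and $f$ oddly, transporting the coefficients as $JAJ^T$ and the weight by composition, verifies by a direct computation with test functions $\overline{\varphi}$ that the extended $\tilde u$ is a weak solution of an elliptic system on a full ball $B_{r_0}$, and then invokes the interior Lemma~\ref{thm:ell-local} as a black box. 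The oscillation of the reflected coefficients is controlled by the oscillation of $A$ plus that of $J$ and $|\nabla a|$, both of which are made small by shrinking $r^*$ (using $\nabla a(0)=0$), which is exactly the $\delta$--$r^*$ calibration you identify. The paper's route is more economical (one reflection computation replaces the whole boundary good-$\lambda$ argument), while yours is the more conventional and arguably more robust strategy when a global reflection is not available.

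Two technical points in your version deserve care. First, for a general constant-coefficient \emph{system} with zero Dirichlet data on $\{y_n=0\}$, plain odd reflection of $h$ does not solve the same system; it solves the system with coefficients $\epsilon_i\epsilon_j(A_i)^{\mu\nu}_{ij}$, $\epsilon_n=-1$, which is still constant and elliptic, so the gradient bound \eqref{eq:harnack} survives --- you flag this, but the clean formulation is the coefficient transformation $JAJ^T$ rather than a componentwise even--odd split. Second, Lemma~\ref{lem:CZ} is stated for $\mathcal{C}^1$ domains, whereas $2Q_i\cap B^+$ is only Lipschitz (corners where the flat face meets the sphere); the comparison estimate for $\tilde u-h$ there needs either a reflection to a full ball or a separate justification. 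Neither point is a fatal gap, but both must be addressed for the argument to close.
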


First notice that in case $B_{2r}(x_0)\subset \Omega$ the inequality \eqref{uptobound} follows from Lemma~\ref{thm:ell-local}. Therefore, we focus only on the behavior near the boundary. Hence, let $x_0\in \partial \Omega$ be arbitrary. Since $\Omega\in \mathcal{C}^1$, we know that there exists $\alpha, \beta>0$ and $r_0>0$ such that (after a possible change of coordinates)
$$
\begin{aligned}
B_{r_0}^+&:=\{(x',x_n):\, |x'|<\alpha, \; a(x')-\beta< x_n <a(x')\}\subset \Omega,\\
B_{r_0}^{-}&:=\{(x',x_n):\, |x'|<\alpha, \; a(x')< x_n <a(x')+\beta\}\subset \Omega^c.
\end{aligned}
$$
Here, we abbreviated $(x_1,\ldots, x_n):= (x',x_n)$. Moreover, we know that for all $r\le \frac{r_0}{2}$ it holds $B_{2r}(x_0)\cap \Omega \subset B_{r_0}^+$ and $B_{2r}(x_0)\cap \Omega^c \subset B_{r_0}^-$. In addition, we have $a\in \mathcal{C}^1([-\alpha,\alpha]^{n-1})$ and $\nabla a(0)\equiv 0$. For later purposes we also denote
$$
B_{r_0}:=B_{r_0}^+\cup B_{r_0}^{-} \cup \{(x,x_n); \, |x'|<\alpha, \, a(x')=x_n\}
$$
and  define a mapping $T:B_{r_0}^+ \to B_{r_0}^-$ as
$$
T(x',x_n):=(x',2a(x')-x_n) \qquad \textrm{ with } \qquad J(x):=\nabla T(x), \textrm{ i.e., } (J(x))_{ij}:=\partial_{x_j} (T(x))_i.
$$
It directly follows from the definition that $|\det J(x)|\equiv 1$ and also that $T$ and $T^{-1}$ are $\mathcal{C}^1$ mappings. Finally, we extend all quantities into $B_{r_0}^-$ as follows:
\begin{align*}
\tilde{u}(x)&:=\left\{\begin{aligned}
&u(x) &&\textrm{for } x\in B_{r_0}^+,\\
&-u(T^{-1}(x)) &&\textrm{for } x\in B_{r_0}^{-},
\end{aligned}
\right.  \\
\tilde{A}(x)&:=\left\{\begin{aligned}
&A(x) &&\textrm{for } x\in B_{r_0}^+,\\
&J(T^{-1}x)A(T^{-1}x)J^T(T^{-1}x) &&\textrm{for } x\in B_{r_0}^{-},
\end{aligned}
\right.  \\
\tilde{f}(x)&:=\left\{\begin{aligned}
&f(x) &&\textrm{for } x\in B_{r_0}^+,\\
&-J(T^{-1}x)f(T^{-1}(x)) &&\textrm{for } x\in B_{r_0}^{-},
\end{aligned}
\right.  \\
\tilde{\omega}(x)&:=\left\{\begin{aligned}
&\omega(x) &&\textrm{for } x\in B_{r_0}^+,\\
&\omega(T^{-1}(x)) &&\textrm{for } x\in B_{r_0}^{-}.
\end{aligned}
\right.
\end{align*}
It also directly follows from the definition and the fact that $u$ has zero trace on $\partial \Omega$ that $\tilde{u} \in W^{1,q}(B_{r_0}; \setR^N)$.
Finally, we show that for all $\varphi \in \mathcal{C}^{0,1}_0(B_{r_0};\setR^N)$ the following identity holds
\begin{equation}
\int_{B_{r_0}} \tilde{A} \nabla \tilde{u} \cdot \nabla \varphi \dx = \int_{B_{r_0}} \tilde{f} \cdot \nabla \varphi \dx.\label{extend}
\end{equation}
For this we observe that for any $\phi \in \mathcal{C}^{0,1}_0(B^-_{r_0};
\setR^N)$ and $\hat{\phi} := \phi \circ T \in \mathcal{C}^{0,1}_0(B^+_{r_0};
\setR^N)$ we have
\begin{align*}
  \int_{B^{-}_{r_0}} &(\tilde{A}\nabla \tilde{u} - \tilde{f}) \cdot
  \nabla \varphi \dx =\int_{B^{-}_{r_0}} \left(\tilde{A}^{\mu
      \nu}_{ij}(x) \frac{\partial \tilde{u}^{\nu}(x)}{\partial x_j} -
    \tilde{f}^{\mu}_i(x)\right)
  \frac{\partial\varphi^{\mu}(x)}{\partial x_i} \dx
  \\
  &=\int_{B^{-}_{r_0}} \left(-\tilde{A}^{\mu \nu}_{ij}(x)
    \frac{\partial (u^{\nu}(T^{-1}x))}{\partial x_j} -
    \tilde{f}^{\mu}_i(x)\right)
  \frac{\partial(\hat{\varphi}^{\mu}(T^{-1}(x)))}{\partial x_i} \dx
  \\
  &=\int_{B^{-}_{r_0}} \left(-\tilde{A}^{\mu \nu}_{ij}(x)
    \frac{\partial u^{\nu}(T^{-1}x)}{\partial (T^{-1}(x))_k}
    J^{-1}_{kj}(T^{-1}(x)) - \tilde{f}^{\mu}_i(x)\right)
  \frac{\partial\hat{\varphi}^{\mu}(T^{-1}(x))}{\partial
    (T^{-1}(x))_m} J^{-1}_{mi}(T^{-1}(x)) \dx
\\
  &=\int_{B^{+}_{r_0}} \left(-\tilde{A}^{\mu \nu}_{ij}(Tx)
    \frac{\partial u^{\nu}(x)}{\partial x_k}
    J^{-1}_{kj}(x)J^{-1}_{mi}(x) -
    \tilde{f}^{\mu}_i(Tx)J^{-1}_{mi}(x)\right)
  \frac{\partial\hat{\varphi}^{\mu}(x)}{\partial x_m} \dx
  \\
  &=-\int_{B^{+}_{r_0}} \left(A(x) \nabla u(x) - f(x)\right) \cdot
  \nabla \hat{\varphi}(x)\dx.
\end{align*}
In particular, for all $\phi \in \mathcal{C}^{0,1}_0(B^+_{r_0}; \setR^N)$ we have
\begin{align}
  \label{eq:mirror}
  \int_{B^{-}_{r_0}} (\tilde{A}\nabla \tilde{u} - \tilde{f}) \cdot
   \nabla (\varphi \circ T^{-1}) \dx &=-\int_{B^{+}_{r_0}} \left(A
     \nabla u - f\right) \cdot \nabla \phi\dx.
\end{align}
Thus, if we define for $\phi \in \mathcal{C}^{0,1}_0(B_{r_0};\setR^N)$
the function
\begin{align*}
  \overline{\phi} &:=
  \begin{cases}
    \phi \circ T^{-1} &\qquad \text{on } B^-_{r_0},
    \\
    \phi &\qquad \text{on } B^+_{r_0},
  \end{cases}
\end{align*}
then $\overline{\phi} \in \mathcal{C}^{0,1}_0(B_{r_0};\setR^N)$
and~\eqref{eq:mirror} implies
\begin{align*}
  \int_{B_{r_0}} \big(\tilde{A} \nabla \tilde{u} -
    \tilde{f}\big) \cdot \nabla \overline{\varphi}\dx &= 0.
\end{align*}
Therefore,
\begin{align*}
  \int_{B_{r_0}} \big(\tilde{A} \nabla \tilde{u} - \tilde{f}\big)
  \cdot \nabla \varphi\dx &= \int_{B_{r_0}} \big(\tilde{A} \nabla
  \tilde{u} - \tilde{f}\big) \cdot \nabla (\varphi -
  \overline{\phi})\dx
  =\int_{B_{r_0}^-} \big(\tilde{A} \nabla \tilde{u} - \tilde{f}\big)
  \cdot \nabla (\varphi - \overline{\phi})\dx.
\end{align*}
Using \eqref{eq:mirror} again,  we get
\begin{align*}
  \int_{B_{r_0}} \big(\tilde{A} \nabla \tilde{u} - \tilde{f}\big)
  \cdot \nabla \varphi\dx &=- \int_{B_{r_0}^+} \big(A \nabla u -
  f\big) \cdot \nabla \big( (\varphi - \overline{\phi}) \circ
  T^{-1}\big) \dx.
\end{align*}
Since $(\varphi - \overline{\phi})
\circ T^{-1} = 0$ on $\partial \Omega$, we finally deduce with the help of~\eqref{wfL2} that
\begin{align*}
  \int_{B_{r_0}} \big(\tilde{A} \nabla \tilde{u} - \tilde{f}\big)
  \cdot \nabla \varphi\dx &= 0
\end{align*}
for all $\phi \in \mathcal{C}^{0,1}_0(B_{r_0};\setR^N)$, which
proves~\eqref{extend}.

Consequently, we see that \eqref{extend} holds and therefore we shall apply the local result stated in Lemma~\ref{thm:ell-local}. To do so, we need to check the assumptions. First, the ellipticity of $\tilde{A}$ can be shown directly from the definition and the fact that $J$ is a regular matrix. Moreover, the constants of ellipticity of $\tilde{A}$ depends only on the same constant for $A$ and on the shape of $\Omega$. Further, to be able to use \eqref{unlocal}, we need to show small oscillations of $\tilde{A}$. Since, $T$ is $\mathcal{C}^1$ we have
$$
\begin{aligned}
\sup_{x,y \in B_{r_0}^{-}}|\tilde{A}(x)-\tilde{A}(y)|&\le \sup_{x,y \in B_{r_0}^{+}}|J(x)A(x)J^T(x)-J(y)A(y)J^T(y)|\\
&\le C\sup_{x,y \in B_{r_0}^{+}}|A(x)-A(y)| + C\sup_{x,y \in B_{r_0}^{+}}|J(x)-J(y)|.
\end{aligned}
$$
Similarly, we can also deduce that
$$
\begin{aligned}
\sup_{x\in B_{r_0}^{-}, y\in B_{r_0}^{+}}|\tilde{A}(x)-\tilde{A}(y)|&\le \sup_{x,y \in B_{r_0}^{+}}|J(x)A(x)J^T(x)-A(y)|\\
&\le C\sup_{x,y \in B_{r_0}^{+}}|A(x)-A(y)| + C\sup_{x\in B_{r_0}^{+}}|J(x)A(x)J^T(x)-A(x)|\\
&\le C\sup_{x,y \in B_{r_0}^{+}}|A(x)-A(y)| + C\sup_{x\in B_{r_0}^{+}}|\nabla a(x')|.
\end{aligned}
$$
Therefore, due to the continuity of $J$ and the fact that $\nabla a(0)=0$, we see that for any $\delta>0$ we can find $r^*>0$ such that
$$
C\sup_{x,y \in B_{r^*}^{+}}|J(x)-J(y)|+ C\sup_{x\in B_{r^*}^{+}}|\nabla a(x')| < \frac{\delta}{2}.
$$
Thus, assuming that
$$
\sup_{x,y\in \Omega; \, C|x-y|\le r^*} |A(x)-A(y)|\le \frac{\delta}{2}
$$
we can conclude that
$$
\sup_{x,y \in B_{r^*}}|\tilde{A}(x)-\tilde{A}(y)|\le \delta.
$$
We find $\delta>0$ and fix $r^*$ such that all assumptions of Lemma~\ref{thm:ell-local} are satisfied and  we consequently have
\begin{equation*}
\begin{split}
\bigg(\dashint_{B_{r^*}(x_0)}\abs{\nabla \tilde{u}}^p\tilde{\omega} \dx\bigg)^\frac1p&\leq  C\bigg( \dashint_{B_{2r^*}(x_0)}\abs{\tilde{f}}^p\tilde{\omega} \dx\bigg)^\frac1p+ C \bigg(\dashint_{B_{2r^*}(x_0)}\tilde\omega \dx\bigg)^{\frac{1}{p}}\bigg(\dashint_{B_{2r^*}(x_0)}\abs{\nabla \tilde{u}}^{\tilde{q}} \dx\bigg)^{\frac{1}{\tilde{q}}}
\end{split}
\end{equation*}
and \eqref{uptobound} follows directly.

\bigskip

\subsection{Global estimates:}
Finally, we focus on the proof of Theorem~\ref{thm:ell}. Recall that
the ball~$Q_0$ is a superset of~$\Omega$. Since $A$ is continuous, we
can find for any $\delta>0$ some $r^*$ such that
$$
\sup_{x,y\in \Omega;\, |x-y|\le r^*}|A(x)-A(y)|\le \delta.
$$
Therefore on any sufficiently small ball, we can use the estimate
\eqref{uptobound}. Since~$\Omega$
has~$\mathcal{C}^1$ boundary, we can find a finite covering of $\Omega$ by
balls $B_i$ of radii at most equal to $r^*$ such that  $\abs{B_i \cap
  \Omega} \geq c\, \abs{B_i}$. Then it follows from \eqref{uptobound}
and~\eqref{stth-new} that
\begin{equation*}
\begin{split}
  &\int_{\Omega}\abs{\nabla u}^p\omega \dx \leq
  C\int_{\Omega}\abs{f}^p\omega \dx+ C\sum_i
  \frac{\omega(2B_i)}{|2B_i|^{\frac{p}{\tilde{q}}}}
  \left(\int_{\Omega}\abs{\nabla u}^{\tilde{q}}
    \dx\right)^{\frac{p}{\tilde{q}}}
  \\
  &\leq C\int_{\Omega}\abs{f}^p\omega \dx+
  C(p,\tilde{q},A,\Omega)\,\omega(Q_0)\left(\int_{\Omega}\abs{\nabla
      u}^{\tilde{q}} \dx\right)^{\frac{p}{\tilde{q}}}
  \leq C(A,\Omega, A_p(\omega))\,
  \int_{\Omega}\abs{f}^p\omega \dx,
\end{split}
\end{equation*}
which finishes the proof of Theorem~\ref{thm:ell}.

\section{Proof of Theorem~\ref{T5}}
We start the proof by observing that \eqref{bit3} leads to the estimate
$$
\int_{\Omega} |a^k \cdot b^k |\omega \dx \le \int_{\Omega} \abs{a^k}^p \omega + \abs{b^k}^{p'}\omega \le C.
$$
Consequently, we can use Lemma~\ref{thm:blem} to conclude that there is a non-decreasing sequence of measurable sets $E_j \subset \Omega$ fulfilling $|\Omega \setminus E_j| \to 0$ as  $j \to \infty$ such that for any $j\in \mathbb{N}$ and any $\varepsilon >0$ there exists a $\delta >0$ such that for each $U\subset E_j$ fulfilling $|U|\le \delta$ there holds
\begin{equation}\label{prbit1}
\sup_{k\in \mathbb{N}}\int_{U}|a^k \cdot b^k |\omega \dx \le \sup_{k\in \mathbb{N}} \int_{U}\abs{a^k}^p \omega + \abs{b^k}^{p'} \omega \dx  \le \varepsilon.
\end{equation}
Consequently, for any $E_j$ we can extract a subsequence that we do not relabel such that
\begin{equation}\label{chcic}
  a^k \cdot b^k \omega \rightharpoonup \overline{a\cdot b\, \omega} \qquad \textrm{ weakly in } L^1(E_j),
\end{equation}
where $\overline{a\cdot b\, \omega}$ denotes in our notation the weak
limit. Further, since $L^p_\omega(\Omega)$ and $L^{p'}_\omega(\Omega)$ are reflexive,
we can pass to a (non-relabeled) subsequence with
\begin{align}
\label{bit}
\begin{aligned}
  a_k \weakto a \qquad \text{weakly in $L^p_\omega(\Omega;\setR^n)$},
  \\
  b_k \weakto b \qquad \text{weakly in $L^{p'}_\omega(\Omega;\setR^n)$}.
\end{aligned}
\end{align}
Our goal is to show that
\begin{equation}
\overline{a\cdot b \, \omega} = a\cdot b \, \omega \textrm{ almost everywhere in } \Omega. \label{prbit2}
\end{equation}
Indeed, if this is the case then it follows that not only a subsequence but the whole sequence fulfills \eqref{chcic}.

Since $\omega \in \mathcal{A}_p$, we can find by~\eqref{eq:lqprop}
some $q>1$ such that $L^p_\omega(\Omega) \embedding L^q(\Omega)$. This implies
\begin{align}
a^k &\rightharpoonup a &&\textrm{weakly in } L^q(\Omega; \setR^n).\label{bit1b}
\end{align}
Moreover, since the mapping $g \mapsto g \omega^{\frac 1s}$ is an
isometry from $L^s_\omega(\Omega)$ to $L^s(\Omega)$, we also have
\begin{align}
  a^k \omega^{\frac{1}{p}} &\rightharpoonup a \omega^{\frac{1}{p}} &&\textrm{weakly in } L^p(\Omega; \setR^n),\label{bit1str}\\
  b^k \omega^{\frac{1}{p'}} &\rightharpoonup b \omega^{\frac{1}{p'}}
  &&\textrm{weakly in } L^{p'}(\Omega; \setR^n)\label{bit2str}.
\end{align}
Then, extending $a^k$ by zero outside $\Omega$ we can introduce $d^k$ such that
$$
\triangle d^k = a^k \qquad \textrm{ in }\setR^n,
$$
i.e., we set $d^k:= a^k * G$, where $G$ denotes the Green function of the Laplace operator on the whole $\setR^n$. Then, using \eqref{bit1b} we see that
\begin{equation}
d^k \rightharpoonup d \qquad \textrm{ weakly in } W^{2,q}_{\loc}(\setR^n; \setR^n),\label{dconb}
\end{equation}
where
$$
\triangle d = a \qquad \textrm{ in }\setR^n.
$$
In addition, using \eqref{bit3} and the weighted theory for Laplace equation on $\setR^n$ (see \cite[p.244]{CoiFef74}) we can deduce
\begin{equation}
\nabla^2 d^k \rightharpoonup \nabla^2 d \qquad \textrm{ weakly in } L^{p}_{\omega}(\setR^n; \setR^{n\times n \times n}).\label{dconb2}
\end{equation}
Hence, to show \eqref{prbit2}, it is enough to check whether
\begin{align}
b^k \cdot (a^k -\nabla \divergence d^k)\omega &\rightharpoonup b \cdot (a -\nabla \divergence d)\omega &&\textrm{weakly in } L^1(E_j),\label{wan1}\\
b^k \cdot \nabla (\divergence d^k)\omega &\rightharpoonup  b \cdot \nabla (\divergence d) \omega &&\textrm{weakly in } L^1(E_j) \label{wan2}
\end{align}
for all $j \in \mathbb{N}$.

First, we focus on \eqref{wan1}. Assume for a moment that we know
\begin{equation}
\lim_{k\to \infty} \int_{\Omega} |a^k-a + \nabla (\divergence (d-d^k))|\tau \dx = 0 \label{str1a}
\end{equation}
for all nonnegative $\tau \in \mathcal{D}(\Omega)$. Then for any $\varphi \in L^{\infty}(E_j)$ we have
$$
\begin{aligned}
&\lim_{k\to \infty}\int_{E_j}b^k \cdot (a^k -\nabla \divergence d^k)\omega \varphi \dx \\
&=\lim_{k\to \infty}\int_{E_j}b^k \cdot (a -\nabla \divergence d)\omega \varphi \dx +\lim_{k\to \infty}\int_{E_j}b^k \cdot (a^k-a +\nabla \divergence (d- d^k))\omega \varphi \dx\\
&\overset{\eqref{bit2str}}=\int_{E_j}b \cdot (a -\nabla \divergence d)\omega \varphi \dx +\lim_{k\to \infty}\int_{E_j}b^k \cdot (a^k-a +\nabla \divergence (d- d^k))\omega \varphi \dx
\end{aligned}
$$
and \eqref{wan1} follows provided that the second limit in the above formula vanishes. However, we first notice that (for a subsequence) \eqref{str1a} implies that
\begin{equation}
\label{seqml}
b^k \cdot (a^k-a +\nabla \divergence (d- d^k))\omega \varphi \to 0 \textrm{ a.e. in } \Omega.
\end{equation}
Second, using \eqref{dconb} and \eqref{bit1str} we see that for any $U\subset E_j$ we have
$$
\int_{U}|b^k \cdot (a^k-a +\nabla \divergence (d- d^k))\omega \varphi| \dx\le C\|\varphi\|_{\infty}\|a^k-a\|_{L^p_{\omega}(\Omega)} \left(\int_{U}\abs{b^k}^{p'}\omega \dx \right)^{\frac{1}{p'}}.
$$
Then the equi-integrability \eqref{prbit1} also guarantees the equi-integrability  of the sequence \eqref{seqml} and consequently, the Vitali theorem leads to
$$
\lim_{k\to \infty}\int_{E_j}b^k \cdot (a^k-a +\nabla \divergence (d- d^k))\omega \varphi \dx=0,
$$
which finishes the proof of \eqref{wan1} provided we show \eqref{str1a}. First, it follows from \eqref{bit5} and \eqref{bit1b} that for a subsequence that we do  not relabel $\partial_{x_i}a_j^k-\partial_{x_j}a_i^k\to\partial_{x_i}a_j-\partial_{x_j}a_i$ strongly in $(W^{1,r}_0(\Omega))^*$ for all $i,j=1,\ldots, n$. Therefore, by the regularity theory for Poisson's equation we find that
\begin{equation}\label{kasl}
\partial_{x_i}d^k_j - \partial_{x_j}d^k_i \to \partial_{x_i}d_j - \partial_{x_j}d_i \textrm{ strongly in } W^{1,r}_{\loc}(\Omega)
\end{equation}
for all $i,j=1,\ldots n$ and all $r\in [1,q)$, where $q>1$ comes from \eqref{bit1b}. Moreover, using the definition of $d^k$ we have
$$
a^k_j -\partial_{x_j} \divergence d^k = \sum_{m=1}^n \partial^2_{x_m^2} d^k_j - \partial_{x_j}\partial_{x_m} d^k_m = \sum_{m=1}^n\partial_{x_m}\left(\partial_{x_m} d^k_j - \partial_{x_j} d^k_m\right)
$$
and with the help of \eqref{kasl} we see that \eqref{str1a} directly follows and the proof of \eqref{wan1} is complete.

The rest  of this subsection is devoted to the most difficult part of the proof, which is the validity of \eqref{wan2}. For simplicity, we denote $e^k:=\divergence d^k$ and due to \eqref{dconb} and \eqref{dconb2} we have that
\begin{align}
e^k &\rightharpoonup e &&\textrm{ weakly in } W^{1,q}_{\loc}(\setR^n),\label{cek}\\
\nabla e^k &\rightharpoonup \nabla e &&\textrm{ weakly in } L^{p}_{\omega}(\setR^n; \setR^n),\label{cek2}
\end{align}
where $e=\divergence d$. Since we are interested only in the convergence result in $\Omega$ we localize $e^k$ by a proper cutting outside $\Omega$. To be more precise on the ball $B$ (recall that it is a ball such that $\Omega \subsetneq B$) we set
$$
e^k_B:=e^k \tau
$$
with $\tau \in \mathcal{D}(B)$ being identically one in $\Omega$. In addition, we can observe that
\begin{align}
e^k_B &\rightharpoonup e_B &&\textrm{ weakly in } W^{1,q}_{0}(B),\label{cekB}\\
\nabla e^k_B &\rightharpoonup \nabla e_B &&\textrm{ weakly in } L^{p}_{\omega}(B; \setR^n).\label{cek2B}
\end{align}
Indeed, the relation \eqref{cekB} it a trivial consequence of \eqref{cek} and for the validity of \eqref{cek2B} it is enough to show that
$$
\int_B |\nabla e^k_B|^p \omega \dx \le C.
$$
Since $|\nabla e^k_B|\le C|\nabla e^k| + C|e^k-(e^k)_B| + C|(e^k)_B|$, where $e^k_B$ denotes the mean value of $e^k$ over $B$, it follows form \eqref{cek} and \eqref{cek2}  that we just need to estimate the term involving $|e^k-(e^k)_B|$. But using the point-wise estimate $|e^k-(e^k)_B|\le C(B)M(\nabla e^k)$ we have
$$
\int_B |e^k_B-(e^k)_B|^p \omega \dx \le  C \int_{\setR^n} |M(\nabla e^k)|^p \omega \dx \le C A_p(\omega)\int_{\setR^n} |\nabla e^k|^p \omega \dx \le C,
$$
where we used the properties of $\Acal_p$-weights. Finally, since $e^k_B\in W^{1,1}_0(B)$ we can apply the Lipschitz approximation (Theorem~\ref{thm:liptrunc}), which implies that for arbitrary fixed $\lambda>\lambda_0$ and for any $k$ we find the Lipschitz approximation of $e^k_B$ on the set $B$ and denote it by $e^{k,\lambda}_B$. Then thanks to Theorem~\ref{thm:liptrunc}, for any $\lambda$ we can find a subsequence (that is not relabeled) such that
\begin{align}
\nabla e^{k,\lambda}_B &\rightharpoonup^* \nabla e^{\lambda}_B &&\textrm{ weakly$^*$ in } L^{\infty}(B;\setR^n),\label{cekBL}\\
\nabla e^{k,\lambda}_B &\rightharpoonup \nabla e^{\lambda}_B &&\textrm{ weakly in } L^{p}_{\omega}(B; \setR^n),\label{cek2BL}\\
e^{k,\lambda}_B &\to e^{\lambda}_B &&\textrm{ strongly in } \mathcal{C}(B).\label{cek3BL}
\end{align}
Please notice that we do not have any a~priori knowledge of how the limit $e^{\lambda}_B$ can be found, we just know that it exists.

In the next step, we identify the weak limit of $b^k \cdot \nabla e^{k,\lambda}_B$. Due to \eqref{bit} and \eqref{cekBL}, we see that this sequence is equi-integrable and consequently poses a weakly converging (in the topology of $L^1$) subsequence. Therefore, to identify it, it is enough to show that for all $\eta \in \mathcal{D}(\Omega)$ we have
$$
\lim_{k\to \infty}\int_{\Omega} b^k \cdot \nabla e^{k,\lambda}_B \eta \dx = \int_{\Omega} b \cdot \nabla e^{\lambda}_B \eta \dx.
$$
However, using \eqref{bit4}, \eqref{cekBL} and \eqref{cek3BL} we can deduce that
$$
\begin{aligned}
&\lim_{k\to \infty}\int_{\Omega} b^k \cdot \nabla e^{k,\lambda}_B \eta \dx = \lim_{k\to \infty}\int_{\Omega} b^k \cdot (\nabla e^{k,\lambda}_B-\nabla e^{\lambda}_B) \eta \dx+\int_{\Omega} b \cdot \nabla e^{\lambda}_B \eta \dx\\
&= \lim_{k\to \infty}\int_{\Omega} b^k \cdot \nabla ((e^{k,\lambda}_B- e^{\lambda}_B) \eta) \dx+ \lim_{k\to \infty}\int_{\Omega} b^k \cdot  \nabla\eta (e^{k,\lambda}_B- e^{\lambda}_B) \dx +\int_{\Omega} b \cdot \nabla e^{\lambda}_B \eta \dx\\
&=\int_{\Omega} b \cdot \nabla e^{\lambda}_B \eta \dx
\end{aligned}
$$
and therefore we have
\begin{equation}
b^k \cdot \nabla e^{k,\lambda}_B  \rightharpoonup b \cdot \nabla e^{\lambda}_B \qquad \textrm{ weakly in } L^1(\Omega). \label{Lweak}
\end{equation}
Finally, let $\varphi \in L^{\infty}(E_j)$ be arbitrary and $C:=C(|\varphi\|_{\infty})$. Then we check the validity of \eqref{wan2} as follow.
\begin{equation}\label{zaklad}
\begin{aligned}
&\lim_{k\to \infty}\left|\int_{E_j}(b^k \cdot \nabla (\divergence d^k)- b \cdot \nabla (\divergence d))\omega \varphi \dx\right|  =\lim_{k\to \infty}\left|\int_{E_j}(b^k \cdot \nabla e^{k}_B- b \cdot \nabla e_B)\omega \varphi \dx\right|\\
&\le \lim_{k\to \infty}\left|\int_{E_j}(b^k \cdot \nabla e^{k,\lambda}_B- b \cdot \nabla e^{\lambda}_B)\omega \varphi \dx\right|+C\limsup_{k\to \infty}\int_{E_j}|b^k| |\nabla (e^k_B-e^{k,\lambda}_B)|\omega\dx\\
&\quad+\left|\int_{E_j}b \cdot \nabla (e_B-e^{\lambda}_B)\omega \varphi \dx\right|\\
&\le \lim_{k\to \infty}\left|\int_{E_j}\frac{(b^k \cdot \nabla e^{k,\lambda}_B- b \cdot \nabla e^{\lambda}_B)\varphi \omega}{1+\varepsilon\omega} \dx\right|+C\limsup_{k\to \infty}\left|\int_{E_j} \frac{\varepsilon \omega^2(\abs{b^k}\abs{\nabla e^{k,\lambda}_B}+ \abs{b} \abs{\nabla e^{\lambda}_B}) }{1+\varepsilon \omega}\dx\right|\\
&\quad+C\limsup_{k\to \infty}\int_{E_j}|b^k| |\nabla (e^k_B-e^{k,\lambda}_B)|\omega\dx+\left|\int_{E_j}b \cdot \nabla (e_B-e^{\lambda}_B)\omega \varphi \dx\right|\\
&\leq C\limsup_{k\to \infty}\int_{E_j}\frac{\varepsilon \omega^2 \abs{b^k}\abs{\nabla e^{k,\lambda}_B}}{1+\varepsilon \omega}+C\limsup_{k\to \infty}\int_{E_j}|b^k| |\nabla (e^k_B-e^{k,\lambda}_B)|\omega \dx\\
&\qquad +\left|\int_{E_j}b \cdot \nabla (e_B-e^{\lambda}_B)\omega \varphi \dx\right|+C\int_{E_j}\frac{\varepsilon \omega^2 \abs{b} \abs{\nabla e^{\lambda}_B} }{1+\varepsilon \omega}\dx=:(I)+(II)+(III)+(IV),
\end{aligned}
\end{equation}
where the last identity follows from \eqref{Lweak} since $\varphi \omega/(1+\varepsilon \omega)$ is a bounded function whenever $\varepsilon >0$. In the next step, we show the all terms on the right hand side vanishes when we let $\varepsilon \to 0_+$ and $\lambda \to \infty$. To do so, we first observe that thanks to Theorem~\ref{thm:liptrunc} and the weak lower semicontinuity  we have
\begin{align}
\nabla e^{k,\lambda}_B \rightharpoonup \nabla e^{\lambda}_B &\qquad \textrm{ weakly in } L^p_{\omega}(\Omega; \setR^n),\label{ukilo}\\
e^{k,\lambda}_B \rightharpoonup  e^{\lambda}_B &\qquad \textrm{ weakly in } W^{1,q}(\Omega),\label{ukilo1}\\
\int_{\Omega} |\nabla e^{\lambda}_B|^q+|\nabla e^{\lambda}_B|^p \omega \dx &\le C\liminf_{k\to \infty}\int_{B} |\nabla e^{k}_B|^q+|\nabla e^k_B|^p\omega \dx \le C. \label{ukiyo2}
\end{align}
Therefore, applying the H\"{o}lder inequality, we have the estimate
$$
\int_{E_j}\abs{b} \abs{\nabla e^{\lambda}_B} \omega \dx \le C.
$$
Consequently, using the Lebesgue dominated convergence theorem (and also the fact that $\omega$ is finite almost everywhere), we deduce
\begin{equation}
\label{lim1}
\lim_{\varepsilon \to 0_+}(IV)=C\lim_{\varepsilon \to 0_+}\int_{E_j}\abs{b} \abs{\nabla e^{\lambda}_B} \frac{\varepsilon \omega^2 }{1+\varepsilon \omega}\dx=0.
\end{equation}
For the second term involving $\varepsilon$ the key property is the uniform equi-integrability of $b^k$ stated in \eqref{prbit1}. Indeed, applying the H\"{o}lder inequality and \eqref{ukiyo2} we have
\begin{align*}
\lim_{\varepsilon \to 0_+}(I)&=C\limsup_{\varepsilon \to 0_+}\limsup_{k\to \infty}\int_{E_j}\abs{b^k}\abs{\nabla e^{k,\lambda}_B}\frac{\varepsilon \omega^2 |\varphi|}{1+\varepsilon \omega} \dx \\
&\le C\limsup_{\varepsilon \to 0_+}\limsup_{k\to \infty}\left(\int_{E_j}\abs{b^k}^{p'}\omega \frac{\varepsilon \omega}{1+\varepsilon \omega} \dx\right)^{\frac{1}{p'}}\left(\int_{E_j}\abs{\nabla e^{k,\lambda}_B}^p\omega \dx\right)^{\frac{1}{p}}\\
&\le C\limsup_{\varepsilon \to 0_+}\limsup_{k\to \infty}\left(\int_{E_j\cap\{w>\lambda\}}\abs{b^k}^{p'}\omega \frac{\varepsilon \omega}{1+\varepsilon \omega} \dx\right)^{\frac{1}{p'}}\\
&\quad+C\limsup_{\varepsilon \to 0_+}\limsup_{k\to \infty}\left(\int_{E_j\cap\{w\le \lambda\}}\abs{b^k}^{p'}\omega \frac{\varepsilon \omega}{1+\varepsilon \omega} \dx\right)^{\frac{1}{p'}}\\
&  \le C\limsup_{k\to \infty}\left(\int_{E_j\cap\{w>\lambda\}}\abs{b^k}^{p'}\omega \dx\right)^{\frac{1}{p'}}.
\end{align*}
Since $|\{\omega >\lambda\}|\le C/\lambda$, we can use \eqref{prbit1} and let $\lambda \to \infty$ in the last inequality to deduce
\begin{equation}
\label{lim2}
\begin{split}
&\limsup_{\lambda\to\infty}\limsup_{\varepsilon \to 0_+}\limsup_{k\to \infty}\int_{E_j}\abs{b^k}\abs{\nabla e^{k,\lambda}_B}\frac{\varepsilon \omega^2 |\varphi|}{1+\varepsilon \omega} \dx =0.
\end{split}
\end{equation}
Next, we let $\lambda \to \infty$ in all remaining terms on the right hand side of \eqref{zaklad}. Using \eqref{itm:weight} and the H\"{o}lder inequality, we have
\begin{equation}
\begin{split}
\limsup_{\lambda \to \infty}(II)&=C\limsup_{\lambda \to \infty}\limsup_{k\to \infty}\int_{E_j}|b^k| |\nabla (e^k_B-e^{k,\lambda}_B)|\omega \dx\\
&=C\limsup_{\lambda \to \infty}\limsup_{k\to \infty}\int_{E_j\cap\{M(\nabla e^k_B)>\lambda\}}|b^k| |\nabla (e^k_B-e^{k,\lambda}_B)|\omega \dx\\
&\le C\limsup_{\lambda \to \infty}\limsup_{k\to \infty}\left(\int_{E_j\cap\{M(\nabla e^k_B)>\lambda\}}\abs{b^k}^{p'}\omega \dx\right)^{\frac{1}{p'}}=0,
\end{split}
\label{lim4}
\end{equation}
where the last inequality follows from the fact that $|\{M(\nabla e^k_B)>\lambda\}|\le C/\lambda$ and \eqref{prbit1}. Finally, we are left to show
\begin{equation}
\begin{split}
\lim_{\lambda \to \infty}(III)=&\lim_{\lambda \to \infty}\left|\int_{E_j}b \cdot \nabla (e_B-e^{\lambda}_B)\omega \varphi \dx\right|=0.
\end{split}
\label{lim5}
\end{equation}
However, to get \eqref{lim5}, it is enough to show that
$$
\nabla e^{\lambda}_B \rightharpoonup \nabla e_B \qquad \textrm{weakly in } L^p_{\omega}(\Omega; \setR^n).
$$
Due to \eqref{ukiyo2}, we however have that there is some $\overline{e_B}\in W^{1,q}(\Omega)$ such that
$$
\begin{aligned}
e^{\lambda}_B &\rightharpoonup \overline{e_B} &&\textrm{weakly in } W^{1,q}(\Omega),\\
\nabla e^{\lambda}_B &\rightharpoonup \nabla \overline{e_B} &&\textrm{weakly in } L^p_{\omega}(\Omega; \setR^n).
\end{aligned}
$$
Hence, due to the uniqueness of the weak limit it is enough to check that $\overline{e}_B=e_B$. To do so, we use the compact embedding $W^{1,1}(\Omega) \embedding \embedding L^1(\Omega)$ to get
$$
\begin{aligned}
\|\overline{e}_B-e_B\|_1 &= \lim_{\lambda \to \infty}\int_{\Omega}|e^{\lambda}_B - e_B|\dx = \lim_{\lambda \to \infty} \lim_{k\to \infty}\int_{\Omega}|e^{k,\lambda}_B - e^k_B|\dx\\
&=\lim_{\lambda \to \infty} \lim_{k\to \infty}\int_{\Omega\cap\{M(\nabla e^k_B)>\lambda\}}|e^{k,\lambda}_B - e^k_B|\dx\\
&\le \lim_{\lambda \to \infty} \lim_{k\to \infty} \|e^{k,\lambda}_B - e^k_B\|_q |\Omega\cap\{M(\nabla e^k_B)>\lambda\}|^{\frac{1}{q'}}\le C\lim_{\lambda \to \infty}\lambda^{-\frac{1}{q'}}=0
\end{aligned}
$$
and consequently \eqref{lim5} holds. Hence, using \eqref{lim1}--\eqref{lim5} in \eqref{zaklad}, we deduce \eqref{wan2} and the proof is complete.

\section{Proof of Theorem~\ref{thm:liptrunc}}

This part of the paper is devoted to the proof of
Theorem~\ref{thm:liptrunc}. All statements except~\eqref{itm:weight}
are already contained in \cite[Theorem~13]{DieKreSul12} (see
also~\cite{Die13paseky} for a survey on the Lipschitz truncation). The
first inequality of~\eqref{itm:weight} follows directly from the
second one, so it is enough to prove the second estimate.

It follows from~\eqref{eq:lip1} and \eqref{eq:lip2} that
\begin{align*}
  \norm{\nabla (g-g^\lambda)}_{L^p_\omega}&\leq \norm{\nabla
    (g-g^\lambda) \chi_{\set{M(\nabla g)>\lambda}}}_{L^p_\omega}
  \\
  &\leq \norm{\nabla g\, \chi_{\set{M(\nabla g)>\lambda}}}_{L^p_\omega}
  + c\, \norm{\lambda\, \chi_{\set{M(\nabla g)>\lambda}}}_{L^p_\omega}.
\end{align*}
We need to control the second term in the last estimate.
Let us consider the open set $\set{M(\nabla g)>\lambda}$. For every $x
\in \set{M(\nabla g)>\lambda}$ there exists a ball $B_{r(x)}(x)$ with
\begin{align}
  \label{eq:lip5}
  \lambda< \dashint_{B_r(x)}\abs{\nabla g} dx\leq 2\lambda.
\end{align}
These balls cover $\set{M(\nabla g)>\lambda}$.  Next, using Besicovich
covering theorem, we can extract from this cover a countable subset
$B_i$ which is locally finite, i.e.
\begin{align}
  \label{eq:lip6}
  \# \{j\in \mathbb{N}; \, B_i\cap B_j \neq \emptyset\} \le C(n).
\end{align}
Using~\eqref{eq:lip5} and \eqref{eq:lip6} we have the estimate
\begin{align*}
  &\norm{\lambda\, \chi_{\set{M(\nabla g)>\lambda}}}_{L^p_\omega}^p =
  \lambda^p \omega(\set{M(\nabla g)>\lambda}) \leq \sum_i \lambda^p
  \omega(B_i)
  \\
  &\quad\leq \sum_i \bigg( \dashint_{B_i} \abs{\nabla g}\,dx \bigg)^p
  \omega(B_i)
  \leq \sum_i \dashint_{B_i} \abs{\nabla g}^p \omega\,dx
  \bigg(\dashint_{B_i} \omega^{-(p'-1)}\,dx \bigg)^{\frac{1}{p'-1}}
  \omega(B_i)
  \\
  &\quad\leq \mathcal{A}_p(\omega) \sum_i \int_{B_i} \abs{\nabla g}^p
  \omega \,dx
	\leq C(n) \,\mathcal{A}_p(\omega) \int_{\set{M(\nabla g)> \lambda}}
  \abs{\nabla g}^p \omega \,dx.
\end{align*}
This directly leads to the following inequality
\begin{align*}
  \norm{\lambda\, \chi_{\set{M(\nabla g)>\lambda}}}_{L^p_\omega} &\leq
  C(n) \,\mathcal{A}_p(\omega)^{\frac 1p} \norm{\nabla g\,
    \chi_{\set{M(\nabla g)>\lambda}}}_{L^p_\omega},
\end{align*}
which proves the desired estimate~\eqref{itm:weight}.

\section{Proof of Theorem~\ref{T3D}}\label{Last:S}
We present only a sketch of the  proof here, since all steps were already justified in the proof of Theorem~\ref{T3}. Hence, to obtain the a~priori estimate \eqref{apriori3D}, we  observe that
\begin{equation*}
\begin{split}
\int_{\Omega} \tilde{A}(x)(\nabla u-\nabla u_0) \cdot \nabla \varphi \dx = \int_{\Omega} \left(f -\tilde{A}(x)\nabla u_0 +\tilde{A}(x)\nabla u -A(x,\nabla u)\right)\cdot \nabla \varphi \dx,
\end{split}
\end{equation*}
which by the  use of Theorem~\ref{thm:ell} (note here that $u-u_0$ has zero trace) and \eqref{ass:A} leads to
\begin{equation*}
\begin{split}
\int_{\Omega} |\nabla u-\nabla u_0|^p\omega \dx &\le C\int_{\Omega} \left(|f|^p+|\nabla u_0|^p +|\tilde{A}(x)\nabla u -A(x,\nabla u)|^p\right)\omega \dx\\
&\le C(\varepsilon)\int_{\Omega} \left(|f|^p+|\nabla u_0|^p +1\right)\omega \dx+ \varepsilon \int_{\Omega} |\nabla u|^p\omega \dx.
\end{split}
\end{equation*}
Consequently, choosing $\varepsilon$ small enough and using the triangle inequality, we find \eqref{apriori3}. The existence is then identically the same, we just also need to approximate $u_0$ by a sequence of smooth functions such that
$$
u^k_0 \to u_0 \qquad \textrm{strongly in } W^{1,\tilde{q}}(\Omega; \setR^N).
$$
Finally, for the uniqueness proof, we use the similar procedure and we see that if $u_1$ and $u_2$ are two solutions then
\begin{equation*}
\begin{split}
\int_{\Omega} \tilde{A}(x)(\nabla u_1-\nabla u_2) \cdot \nabla \varphi \dx = \int_{\Omega} \left(\tilde{A}(x)(\nabla u_1-\nabla u_2) +A(x,\nabla u_1) -A(x,\nabla u_2)\right)\cdot \nabla \varphi \dx
\end{split}
\end{equation*}
and  since $u_1-u_2 \in W^{1,\tilde{q}}_0(\Omega; \setR^N)$, we may now follow step by step the proof of Theorem~\ref{T3}.

\section{Proofs of Corollaries}\label{P:C}

\subsection{Proof of Corollary~\ref{CMB}}
The proof of Corollary~\ref{CMB} is rather straight forward. Indeed, for a given measure  $f\in \mathcal{M}(\Omega; \setR^N)$ we can use the classical theory and find $v\in W^{1,n'-\varepsilon}_0(\Omega; \setR^N)$ for all $\varepsilon >0$ solving
$$
\int_{\Omega}\nabla v \cdot \nabla \varphi \dx = \langle f, \varphi \rangle \qquad \textrm{for all } \varphi\in \mathcal{C}^{0,1}_0(\Omega; \setR^N).
$$
Then, it follows that $u$ is a solution to \eqref{WFGMB} if and only if it solves
\begin{equation}\label{WFGMB2}
\int_{\Omega}A(x,\nabla u) \cdot \nabla \phi \dx = \int_{\Omega} \nabla v \cdot \nabla \varphi\dx  \qquad \textrm{ for all } \phi\in \mathcal{C}^{0,1}_0(\Omega; \setR^N).
\end{equation}
Thus, we can now apply Theorem~\ref{T1} with $f:=\nabla v$ and all statements in Corollary~\ref{CMB} directly follows.

\subsection{Proof of Corollary~\ref{C2}}
\label{ssec:proofT2}
We show that Corollary~\ref{C2} can be directly
proved by using Theorem~\ref{T3}.
 Indeed, by setting
$$
\omega:=(1+Mf)^{q-2}=(M(1+|f|))^{q-2},
$$
where we extended $f$ by zero outside $\Omega$,
we can use Lemma~\ref{cor:dual} to deduce that $\omega \in \Acal_2$ provided that $|q-2|<1$. Since $q\in (1,2)$, we always have $|q-2|<1$ and therefore $\omega\in \Acal_2$. Consequently, we can construct a solution $u$ according to Theorem~\ref{T3}. Next, using \eqref{apriori3} and the continuity of the maximal  function, we can deduce
$$
\begin{aligned}
\int_{\Omega} \frac{|\nabla u|^2}{(1+Mf)^{2-q}}\dx &= \int_{\Omega} |\nabla u|^2 \omega \dx \le C(A_2(\omega),\Omega)\left(1+\int_{\Omega}|f|^2\omega \dx\right)\\
&= C(A_2(\omega),\Omega)\left(1+\int_{\Omega}\frac{|f|^2}{(1+Mf)^{2-q}} \dx\right)\\
&\le C(A_2(\omega),\Omega)\left(1+\int_{\Omega}(Mf)^q \dx\right)\le C(f,\Omega,q)\left(1+\int_{\Omega}|f|^q \dx\right),
\end{aligned}
$$
which is nothing else than \eqref{apriori2}.
\bibliographystyle{abbrv} 
\bibliography{lars}
\end{document}